 \def\@textbottom{\vskip \z@ \@plus 100pt}
 \let\@texttop\relax
\newtheorem{thm}{Theorem}[section]
\newtheorem{prop}[thm]{Proposition}
\newtheorem{lem}[thm]{Lemma}
\newtheorem{cor}[thm]{Corollary}
\newtheorem{remark}[thm]{Remark}
\theoremstyle{definition}
\theoremstyle{remark}
\numberwithin{equation}{section}
\newcommand{\br}{\bar \rho}
\newcommand{\bt}{\bar \theta}
\newcommand{\ip}{(0,L)}
\newcommand{\sh}{\hat \sigma}
\newcommand{\vh}{\hat v}
\newcommand{\ph}{\hat \phi}
\begin{document}

\title[Linearized Compressible Navier- Stokes System]{Some Controllability Results For Linearized Compressible  Navier-Stokes system.}

 \date{\today}

\author{Debayan Maity}
\address{Centre for Applicable Mathematics, TIFR, 
Post Bag No. 6503, GKVK Post Office, Bangalore-560065,India.}
\email{debayan@math.tifrbng.res.in}
\thanks{The author is a member of an IFCAM-project, Indo-French Center for Applied Mathematics - UMI IFCAM,
Bangalore, India, supported by DST - IISc - CNRS - and Universit\'e Paul Sabatier Toulouse III.}




\begin{abstract}
In this article, we study the null controllability of linearized compressible Navier-Stokes system in one and two dimension.
We first study the one-dimensional compressible Navier-Stokes system for non-barotropic fluid 
linearized around a constant steady state. We prove that the linearized system around $(\br,0,\bt)$, with $\br > 0,$ $ \bt > 0$ is   
not null controllable by localized interior control or by boundary control. But the system is null controllable by interior controls
 acting everywhere in the velocity and temperature equation for regular initial condition. We also prove that the 
 the one-dimensional compressible Navier-Stokes system for non-barotropic fluid 
linearized around a constant steady state
$(\br,\bar v ,\bt)$, with $\br > 0,$  $\bar v > 0,$ $\bt > 0$ is not null controllable by localized interior control or by boundary control 
for small time $T.$ Next we consider two-dimensional compressible Navier-Stokes system for barotropic fluid linearized around a
constant steady state $(\br, {\bf 0}).$ We prove that this system is also not null controllable by localized interior control.
\end{abstract}
\maketitle

{\bf Key words.} Linearized compressible Navier-Stokes System, Null controllability, localized interior control, boundary
control, Gaussian Beam.

{\bf AMS subject classifications.} 35Q30, 93C20, 93B05







\section{Introduction}
Control of fluid flow has been an important area of research and has many practical applications.
The question about controllability of fluid
flows has attracted the attention of many researchers, more for incompressible flow but much less for compressible flow.
 In this paper we are interested in controllability properties of linearized compressible 
Navier-Stokes system.

For a compressible, isothermal
barotropic fluid (density is a function of only pressure), 
the Navier-Stokes system in $\Omega \subset \mathbb{R}^N,$ consists of equation of continuity
\begin{equation}
 \begin{array}{lll}
  \displaystyle
  \frac{\partial \rho}{\partial t}(x,t) + \mbox{div} [\rho(x,t){\bf u}(t,x)] = 0,
  \end{array}
  \end{equation}
  and the momentum equation
  \begin{equation}
  \begin{array}{lll} 
   \displaystyle
\rho(x,t)\left[\frac{\partial {\bf u}}{\partial t}(x,t) + ({\bf u}(x,t).\nabla) {\bf u}(x,t) - {\bf f}(x,t)\right]
 \\ [3.mm] \displaystyle
 = -\nabla p(x,t) + \mu \bigtriangleup {\bf u}(x,t) + (\lambda + \mu ) \nabla [\mbox{div } {\bf u}(x,t)],
  \end{array}
\end{equation}
where $\rho(x,t)$ is the density of the fluid, ${\bf u}(x,t)$ denotes the velocity vector in $\mathbb{R}^N$ and ${\bf f}(x,t)$ is an external force 
field in $\mathbb{R}^N.$ The pressure satisfies the following constitutive law
 \begin{equation}
 p(x,t) = a \rho^\gamma(x,t), \quad t > 0 , \quad x \in \Omega,
\end{equation}
for some constants $a > 0$, $\gamma \geq 1.$ The viscosity coefficients $\mu$ and $\lambda$ are assumed to be constant satisfying the following
thermodynamic restrictions, $\mu > 0$, $\lambda + \mu \geq 0.$ For non-barotropic fluid (when density is a function of pressure and temperature
of the fluid), 
the Navier-Stokes system consists of the equation of 
continuity, the momentum equation and an additional thermal energy equation 
\begin{equation}
 \begin{array}{lll}
 \displaystyle
 c_v \rho(x,t) \left[\frac{\partial \theta}{\partial t} + {\bf u}.\nabla \theta\right](x,t) + 
 \theta(x,t) \frac{\partial p}{\partial \theta}(x,t) \mbox{div } {\bf u}(x,t) 
 \\ [3.mm] \displaystyle
 = \kappa \bigtriangleup \theta (x,t) + \lambda (\mbox{div }{\bf u}(x,t))^2 + 
 2 \mu \sum_{i,j=1}^N \frac{1}{4} \left(\frac{\partial {\bf u}_i}{\partial x_j} + \frac{\partial {\bf u}_j}{\partial x_i}\right)^2,
 \end{array}
\end{equation}
where $\theta(x,t)$ denotes the temperature of the fluid, $c_v$ is the specific heat constant and 
$\kappa$ is the heat conductivity constant. For ideal gas,
the pressure  is given by Boyle’s law: 
\begin{equation}
p(x,t) = R \ \rho(x,t) \ \theta(x,t), 
\end{equation}
where $R$ is the universal gas constant (See \cite{F}).

In this article, we first consider
the compressible Navier-Stokes system for non-barotropic fluid  in a bounded interval $\ip,$
 linearized around a constant steady state $(\br,0,\bt),$ with $\br >0$ and $\bt > 0.$  More precisely we consider the system 
\begin{align}  \label{eq:linearized}
&\rho_t  + \br \;u_x \;\; = f \chi_{\mathcal{O}_1}, \mbox{ in }  (0,L) \times (0,T), \notag \\ 
&u_t - \frac{\lambda + 2\mu}{\br} u_{xx} + \frac{R \bt}{\br}\;\rho_x  + R \theta_x  \; = 
g \chi_{\mathcal{O}_2}, \mbox{ in }  (0,L) \times (0,T), \notag \\
&\theta_t -\frac{\kappa}{\br c_v} \theta_{xx} + \frac{R \bt}{c_v} u_x \; = h \chi_{\mathcal{O}_3}, 
\mbox{ in }  (0,L) \times (0,T),
\end{align}
where $\chi_{\mathcal{O}_i}$ is the characteristic function of an open subset ${\mathcal{O}}_i \subset \ip.$ We choose the following
initial and boundary conditions for the system \eqref{eq:linearized}:
\begin{equation} \label{eq:ini+bdy}
 \begin{array}{lll}
  \rho(0) = \rho_0,\quad u(0) = u_0 \mbox{ and } \theta(0) = \theta_0, \mbox{ in }  \ip,
  \\ [2.mm]
  u(0,t) = 0, \quad u(L,t) = 0 \quad \forall \ \ t > 0,
 \\ [2.mm]
 \theta(0,t) = 0, \quad \theta(L,t) = 0 \quad \forall \ \ t > 0.
 \end{array}
\end{equation}
In \eqref{eq:linearized} - \eqref{eq:ini+bdy}, $f$, $g$  and $h$ are distributed controls. 
We are interested in the following question: given $T > 0$ and $(\rho_0,u_0,\theta_0) \in (L^2(0,L))^3,$
can we find interior control functions such that the solution $(\rho,u,\theta)$ of \eqref{eq:linearized} - \eqref{eq:ini+bdy}
satisfies 
\begin{equation}
 (\rho,u,\theta)(x,T) = 0  \mbox{ for all } x \in \ip?
\end{equation}

Our first  main result regarding interior null controllability 
is the following,
\begin{thm} \label{thm1}
Let $${\mathcal{O}}_1  \subset \ip ,\quad {\mathcal{O}}_2 \subseteq \ip, \qquad {\mathcal{O}}_3 \subseteq \ip,$$ 
i.e., $\mathcal{O}_1$ is a proper subset of $\ip.$
Let us assume 
$(\rho_0,u_0,\theta_0) \in L^2(0,L) \times L^2(0,L) \times L^2(0,L)$. 
The system \eqref{eq:linearized}
- \eqref{eq:ini+bdy} is not null controllable in any $T > 0 $ by the interior controls $f \in L^2(0,T;L^2({\mathcal{O}}_1))$, 
$g \in L^2(0,T;L^{2}({\mathcal{O}}_2))$ and $h \in L^2(0,T;L^{2}({\mathcal{O}}_3))$ acting on density, velocity and temperature
equation respectively. 
\end{thm}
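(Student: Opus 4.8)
The plan is to argue by duality. By the Hilbert Uniqueness Method, null controllability of \eqref{eq:linearized}--\eqref{eq:ini+bdy} in time $T$ with the three localized controls is equivalent to an observability inequality for the adjoint system. Integrating by parts, the adjoint of \eqref{eq:linearized} is the backward system for $(\sigma,v,\phi)$,
\begin{align*}
&-\sigma_t - \frac{R\bt}{\br}\, v_x = 0, \\
&-v_t - \frac{\lambda+2\mu}{\br}\, v_{xx} - \br\, \sigma_x - \frac{R\bt}{c_v}\, \phi_x = 0, \\
&-\phi_t - \frac{\kappa}{\br c_v}\, \phi_{xx} - R\, v_x = 0,
\end{align*}
with $v=\phi=0$ on $\{0,L\}$ and terminal data prescribed at $t=T$; observability would require
\[
\| (\sigma,v,\phi)(\cdot,0)\|_{(L^2(0,L))^3}^2 \le C\left( \iint_{\mathcal{O}_1\times(0,T)} |\sigma|^2 + \iint_{\mathcal{O}_2\times(0,T)} |v|^2 + \iint_{\mathcal{O}_3\times(0,T)} |\phi|^2 \right).
\]
To prove Theorem~\ref{thm1} it therefore suffices to exhibit a sequence of adjoint solutions along which the right-hand side is negligible compared with the left-hand side.

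The guiding observation is that the continuity equation carries no dissipation, so its adjoint $-\sigma_t=\frac{R\bt}{\br}v_x$ does not regularize $\sigma$. Looking for mode solutions $(\sigma,v,\phi)\sim e^{\mu t}e^{ikx}$ and eliminating $v,\phi$, one finds that, besides the two strongly damped parabolic branches ($\mu\sim -ck^2$), the dispersion relation has a \emph{density branch} with $\mu_k\to -R\bt\br/(\lambda+2\mu)$, a fixed negative number, as $k\to\infty$. Along this branch the density amplitude dominates while the velocity and temperature amplitudes are slaved at order $1/k$. Hence for these high-frequency modes $v$ and $\phi$ are $O(1/k)$ relative to $\sigma$, so the second and third observation integrals are negligible even when $\mathcal{O}_2=\mathcal{O}_3=(0,L)$, whereas $(\sigma,v,\phi)(\cdot,0)$ stays bounded below because the decay rate $\mu_k$ is uniformly bounded.

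A single mode does not suffice, since its density component is spread over $(0,L)$ and is therefore seen by the control region $\mathcal{O}_1$. The plan is to superpose these modes into a Gaussian beam (a concentrated wave packet) centered at a point $x_0$ in the nonempty open set $(0,L)\setminus\overline{\mathcal{O}_1}$. Using the WKB ansatz $(\sigma,v,\phi)=e^{ik\psi(x,t)}\big(a_0+k^{-1}a_1+\cdots\big)$ with a complex phase $\psi$ whose imaginary part is positive and quadratic near the ray, one determines $\psi$ from the eikonal equation associated with the density branch and the leading amplitudes from the corresponding transport equations, the density amplitude being $O(1)$ and the velocity and temperature amplitudes being forced to $O(1/k)$. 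Because this branch is essentially non-dispersive when $\bar v=0$, the ray stays at $x_0$ and the packet remains concentrated away from $\mathcal{O}_1$ for all $t\in[0,T]$. After normalizing so that $\|(\sigma,v,\phi)(\cdot,0)\|=1$, the velocity and temperature contributions to the observation are $O(k^{-2})$ and the density contribution $\iint_{\mathcal{O}_1\times(0,T)}|\sigma|^2$ is exponentially small in $k$, so the observability constant must blow up. This contradicts the observability inequality, proving the claimed lack of null controllability for every $T>0$.

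The step I expect to be the main obstacle is turning this formal high-frequency construction into a genuine adjoint solution with a controlled error. Since the density equation is of first order and does not smooth, the residual of the WKB ansatz is not damped, and the coupling term $\br\sigma_x$ entering the velocity equation is of size $k$; one must therefore carry enough correction terms and invoke the well-posedness (energy) estimates for the adjoint system to guarantee that the difference between the true solution and the approximate beam is of lower order than the beam itself at $t=0$. Keeping the beam supported inside $(0,L)$ throughout $[0,T]$ — so that it does not interact with the Dirichlet conditions on $v,\phi$ and no boundary condition on $\sigma$ is needed — is the other point requiring care.
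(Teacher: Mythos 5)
Your overall strategy --- dualize to an observability inequality, single out the non-parabolic ``density branch'' of the dispersion relation whose decay rate stays uniformly bounded (your limit $R\bt\br/(\lambda+2\mu)$ is precisely the paper's $\omega_0=R\bt/\nu_0$) and along which the velocity and temperature amplitudes are slaved at order $1/k$, then superpose such modes into a packet concentrated away from $\mathcal{O}_1$ --- is exactly the mechanism the paper exploits. But your implementation leaves the decisive step open, and it is the step you flag yourself: controlling the residual of the WKB ansatz. That gap never needs to be filled, because the system has constant coefficients. Instead of an asymptotic ansatz with correction terms, take the Fourier transform in $x$, so the whole-line adjoint problem becomes the ODE $-\partial_t(\sh,\vh,\ph)=A(\xi)(\sh,\vh,\ph)$, and choose the terminal data \emph{exactly} along the eigenvector $\left(1,\,i\delta(\xi)/(\br\xi),\,d_\delta(\xi)\right)$ of the branch $-\delta(\xi)$, modulated by $\sh_T^\epsilon(\xi)=\epsilon^{1/4}\psi\left(\sqrt{\epsilon}\left(\xi-\tfrac{1}{\epsilon}\right)\right)e^{-ix_0\xi}$. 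The resulting triple is an exact solution: there is no residual and no transport hierarchy, and the localization of $\sigma^\epsilon$ away from $x_0$ follows from a single integration by parts in $\xi$, using the bound $|\delta'(\xi)|\le C/|\xi|$ obtained by implicit differentiation of the characteristic polynomial. This is what makes the first-order, non-smoothing nature of the density equation harmless.

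The second gap is the boundary. You propose to keep the beam ``supported inside $(0,L)$ so that it does not interact with the Dirichlet conditions,'' but no such solution exists: the packet's Fourier transform is compactly supported, so the packet is entire in $x$, and its tails, though small, do not vanish at $x=0,L$. The paper's resolution is quantitative rather than qualitative: one estimates the traces $v^\epsilon(0,\cdot)$, $v^\epsilon(L,\cdot)$, $\phi^\epsilon(0,\cdot)$, $\phi^\epsilon(L,\cdot)$ in $H^1(0,T)$ (they are $O(\epsilon^{3/4})$ and $O(\epsilon^{7/4})$ respectively), then solves the adjoint system on $(0,L)$ with zero terminal data and the negatives of these traces as boundary data, and adds this correction to the restriction of the beam. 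The corrected triple satisfies \eqref{eq:adjoint1} exactly; its initial density norm stays bounded below (by $\tfrac{1}{4\pi}e^{-2a_1T}$ minus an $O(\epsilon^{3/4})$ error), while all observed quantities remain $O(\sqrt{\epsilon})$, so the observability constant blows up. With these two repairs --- exact Fourier synthesis in place of WKB, and an explicit boundary correction in place of ``no interaction with the boundary'' --- your argument becomes the paper's proof.
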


\begin{remark}
 The above negative result can be extended to the case of less regular interior controls or boundary control. See Remark \ref{rem:lessRegControl} and 
 Remark \ref{rem:bdyControl} for more details.
\end{remark}

Our next positive  result shows that, if initial density $\rho_0$ lies in a more regular space then the linearized system is null controllable by 
velocity and temperature controls acting everywhere in the domain. 
\begin{thm} \label{thm2}
Let $f \equiv 0$ in \eqref{eq:linearized}. Let us denote $$H^1_m(0,L) = \left\{ \rho \in H^1(0,L) | \int_0^1 \rho(x) dx = 0 \right\}.$$ 
Let us assume
$(\rho_0,u_0,\theta_0) \in H^1_m(0,L) \times L^2(0,L) \times L^2(0,L)$. Then for any $T > 0,$ there exist controls  $g \in L^2(0,T;L^2(0,L))$ and 
$h \in L^2(0,T;L^2(0,L))$  acting everywhere in the velocity and temperature equation respectively, such that the solution of \eqref{eq:linearized} - \eqref{eq:ini+bdy} satisfies
\[
 (\rho,u,\theta)(x,T) = 0 \mbox{ for all } x\in \ip.
\]
\end{thm}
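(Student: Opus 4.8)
The plan is to exploit the fact that, with $f \equiv 0$, the continuity equation $\rho_t + \br u_x = 0$ determines $\rho$ completely from $u$, so that the whole problem reduces to choosing a velocity field $u$ (and an essentially arbitrary temperature field $\theta$) that steers $\rho$ to zero while itself vanishing at time $T$. Since $u$ vanishes on the boundary, integrating the continuity equation in $x$ shows that the total mass $\int_0^L \rho\,dx$ is conserved; this is precisely why the mean-zero hypothesis $\rho_0 \in H^1_m(0,L)$ enters, and the $H^1$ regularity will be exactly what makes the construction produce $L^2$ controls.

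First I would record the target constraint. From $\rho_t = -\br u_x$ and $\rho(\cdot,0)=\rho_0$ one gets $\rho(x,T) = \rho_0(x) - \br\,\partial_x \int_0^T u(x,t)\,dt$, so $\rho(\cdot,T)\equiv 0$ is equivalent to $\partial_x\int_0^T u\,dt = \rho_0/\br$. Setting $U(x) := \frac{1}{\br}\int_0^x\rho_0(s)\,ds$, the boundary condition $u(0,t)=u(L,t)=0$ forces $U(0)=U(L)=0$: the first holds automatically and the second is exactly the mean-zero condition. Moreover $U \in H^2(0,L)\cap H^1_0(0,L)$ because $\rho_0\in H^1$. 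Thus it suffices to construct $u$ with prescribed initial and final values, the integral-in-time constraint $\int_0^T u\,dt = U$, and enough regularity that the controls land in $L^2$.

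The construction splits each of $u,\theta$ into a parabolic part plus a smooth corrector. Let $v$ solve the homogeneous equation $v_t - \frac{\lambda+2\mu}{\br}v_{xx}=0$ with $v(\cdot,0)=u_0$ and Dirichlet conditions, and let $w$ solve the analogous heat equation with $w(\cdot,0)=\theta_0$; both smooth instantaneously, so $v(T),w(T)\in H^2\cap H^1_0$. I then set $u = v + \tilde u$ and $\theta = w + \tilde\theta$, where $\tilde u(x,t)=\phi(t)\bigl(-v(T)(x)\bigr) + \psi(t)\,q(x)$ and $\tilde\theta(x,t)=\eta(t)\bigl(-w(T)(x)\bigr)$, with $q := U - \int_0^T v\,dt$ and smooth scalar time profiles chosen so that $\tilde u(\cdot,0)=\tilde\theta(\cdot,0)=0$, $\tilde u(\cdot,T)=-v(T)$, $\tilde\theta(\cdot,T)=-w(T)$, and $\int_0^T\tilde u\,dt=q$; concretely this asks $\phi(0)=0,\ \phi(T)=1,\ \int_0^T\phi=0$ and $\psi(0)=0,\ \psi(T)=0,\ \int_0^T\psi=1$, which are trivially realized by smooth functions. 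A short computation using $\partial_{xx}\int_0^T v\,dt=\int_0^T v_{xx}\,dt=\frac{\br}{\lambda+2\mu}\bigl(v(T)-u_0\bigr)\in L^2$ shows $q\in H^2\cap H^1_0$, so $\tilde u,\tilde\theta$ are smooth in $t$ with values in $H^2\cap H^1_0$. Finally $\rho$ is defined by the continuity equation, and the controls are read off from the velocity and temperature equations, $g = \tilde u_t - \frac{\lambda+2\mu}{\br}\tilde u_{xx} + \frac{R\bt}{\br}\rho_x + R\theta_x$ and $h = \tilde\theta_t - \frac{\kappa}{\br c_v}\tilde\theta_{xx} + \frac{R\bt}{c_v}u_x$, the homogeneous parts of $v,w$ dropping out of these expressions by construction.

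The step I expect to be the main obstacle is verifying $g\in L^2(0,T;L^2)$, i.e. that $\rho_x\in L^2(0,T;L^2)$ even though $v_{xx}$ fails to be integrable near $t=0$ for merely $L^2$ data $u_0$. The resolution is the same cancellation as above: since $v$ solves the heat equation, $\int_0^t v_{xx}\,ds = \frac{\br}{\lambda+2\mu}\bigl(v(t)-u_0\bigr)$, which is bounded in $L^2$ uniformly in $t$, so that $\rho_x(t)=\rho_0' - \br\bigl(\tfrac{\br}{\lambda+2\mu}(v(t)-u_0)+\int_0^t\tilde u_{xx}\,ds\bigr)$ lies in $C([0,T];L^2)$. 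The remaining terms are routine: $\theta_x$ and $u_x$ are controlled by $v,w\in L^2(0,T;H^1)$ together with the smooth correctors, while $\tilde u_t,\tilde u_{xx},\tilde\theta_t,\tilde\theta_{xx}\in C([0,T];L^2)$. Uniqueness of solutions then guarantees that the triple $(\rho,u,\theta)$ so constructed is the solution driven by $(g,h)$, and by design it satisfies $(\rho,u,\theta)(\cdot,T)=0$.
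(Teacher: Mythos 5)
Your proposal is correct, but it follows a genuinely different route from the paper. The paper's proof is modular and citation-based: it splits the system into the barotropic subsystem $(\rho,u)$ (with coupling term $R\theta_x$ removed) and a standalone heat equation for $\theta$, invokes Theorem 5.1 of \cite{CRR} to find $\tilde g$ steering $(\rho,u)$ to zero and Theorem 2.66 of \cite{CORON} to find $\tilde h$ steering $\theta$ to zero, and then exploits the fact that the controls act everywhere to absorb the coupling terms, setting $g = \tilde g + R\theta_x$ and $h = \tilde h + \frac{R\bt}{c_v}u_x$. You instead give a fully self-contained construction: you reduce the density constraint to the moment condition $\int_0^T u\,dt = U$ with $U(x)=\frac{1}{\br}\int_0^x\rho_0$ (which is exactly where the mean-zero hypothesis enters, making $U \in H^2\cap H^1_0$), build $u$ and $\theta$ as free heat evolutions plus smooth finite-rank correctors, define $\rho$ by the continuity equation, and read off the controls --- in effect reproving the relevant case of the cited CRR result rather than quoting it. Your key technical step, the cancellation $\int_0^t v_{xx}\,ds = \frac{\br}{\lambda+2\mu}\bigl(v(t)-u_0\bigr)$ giving $\rho_x \in C([0,T];L^2)$ despite $u_0$ being merely $L^2$, is sound (the identity holds distributionally by approximating the lower limit and using $v(\epsilon)\to u_0$ in $L^2$), and all remaining regularity checks go through. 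What each approach buys: the paper's argument is shorter and cleanly separates the non-barotropic problem into two known results, while yours avoids the spectral machinery behind \cite{CRR} entirely, produces explicit controls, and makes transparent precisely how the $H^1_m$ regularity of $\rho_0$ is consumed; both rest on the same structural observation that everywhere-acting controls can swallow arbitrary $L^2$ coupling terms.
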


The next result shows that the above result is sharp as null controllability cannot be achieved by
localized interior 
 velocity and temperature controls.

\begin{thm} \label{thm3}
 Let $f \equiv 0$ in \eqref{eq:linearized}. 
 Let  $${\mathcal{O}}_2 \subset \ip , \qquad {\mathcal{O}}_3 \subseteq \ip,$$ i.e., $\mathcal{O}_2$ is a proper subset of $\ip.$
 Let us assume
$(\rho_0,u_0,\theta_0) \in H^1_m(0,L) \times L^2(0,L) \times L^2(0,L)$. 
The system \eqref{eq:linearized}
- \eqref{eq:ini+bdy} is not null controllable in any $T > 0 $ by the interior controls  
$g \in L^2(0,T;L^2({\mathcal{O}}_2))$ and $h \in L^2(0,T;L^2({\mathcal{O}}_3)),$ acting on velocity and temperature equation respectively.
\end{thm}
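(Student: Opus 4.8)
The plan is to disprove the observability inequality dual to the asserted controllability. By the Hilbert Uniqueness Method, null controllability of \eqref{eq:linearized}--\eqref{eq:ini+bdy} with $f\equiv 0$ in $H^1_m\ip\times L^2\ip\times L^2\ip$ by controls $g\in L^2(0,T;L^2(\mathcal O_2))$, $h\in L^2(0,T;L^2(\mathcal O_3))$ is equivalent to an observability estimate for the adjoint system (writing $\sigma,v,\tau$ for the dual variables, obtained by integration by parts)
\begin{align*}
-\sigma_t &= \tfrac{R\bt}{\br}\,v_x, \\
-v_t &= \br\,\sigma_x + \tfrac{\lambda+2\mu}{\br}\,v_{xx} + \tfrac{R\bt}{c_v}\,\tau_x, \\
-\tau_t &= R\,v_x + \tfrac{\kappa}{\br c_v}\,\tau_{xx},
\end{align*}
with $v=\tau=0$ at $x=0,L$: there exists $C>0$ with $\|\sigma(0)\|_{(H^1_m)'}^2+\|v(0)\|_{L^2\ip}^2+\|\tau(0)\|_{L^2\ip}^2\le C\big(\int_0^T\!\int_{\mathcal O_2}|v|^2\,dx\,dt+\int_0^T\!\int_{\mathcal O_3}|\tau|^2\,dx\,dt\big)$ for every solution. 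I will exhibit a family of adjoint solutions for which the left side stays bounded below while the right side tends to $0$, so that no finite $C$ can hold; in fact I will only need the lower bound $\|v(0)\|_{L^2\ip}^2$ on the left, which sidesteps the $(H^1_m)'$ norm entirely.

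Since the steady state is constant, the adjoint operator has constant coefficients, so I first analyse plane waves $e^{ikx+\Lambda t}$ on $\R$. The symbol is
\[
M(k)=\begin{pmatrix} 0 & \tfrac{R\bt}{\br}ik & 0\\[1mm] \br\, ik & -\tfrac{\lambda+2\mu}{\br}k^2 & \tfrac{R\bt}{c_v}ik\\[1mm] 0 & R\,ik & -\tfrac{\kappa}{\br c_v}k^2\end{pmatrix},
\]
whose characteristic polynomial is a real polynomial in $(\Lambda,k^2)$ with three roots. As $k\to\infty$, two roots behave like the parabolic values $-\tfrac{\lambda+2\mu}{\br}k^2$ and $-\tfrac{\kappa}{\br c_v}k^2$, while the third converges to the \emph{finite} real value $\Lambda_\infty=-\tfrac{R\bt\,\br}{\lambda+2\mu}$; this is the weakly damped density branch, undamped at leading order precisely because the continuity equation carries no diffusion. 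Along this branch the polarization satisfies $\sigma:v:\tau = 1:O(1/k):O(1/k^2)$, so the density component dominates, velocity is smaller by a factor $k^{-1}$, and temperature by $k^{-2}$. Moreover $\Lambda_k$ is real to leading order, so the mode does not propagate (its group velocity tends to $0$) and a wave packet built on it stays put.

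Because $\mathcal O_2$ is a proper subset, fix $x_0\in\ip\setminus\overline{\mathcal O_2}$ and $r>0$ with $(x_0-r,x_0+r)\cap\mathcal O_2=\emptyset$. I will construct a family $\Phi_k=(\sigma_k,v_k,\tau_k)$ of Gaussian-beam approximate adjoint solutions, concentrated at $x_0$ with width $k^{-1/2}$, polarized along the slow branch, with terminal data at $t=T$, of the form $\Phi_k\approx e^{\Lambda_k(T-t)}\,\mathbf p_k\,e^{ikx}e^{-k(x-x_0)^2/2}$ with $\mathbf p_k=(1,O(1/k),O(1/k^2))$. Since $\Lambda_k$ is (essentially) real and the envelope is stationary, the packet remains concentrated near $x_0$ on $[0,T]$, its values at $x=0,L$ and on $\mathcal O_2$ are $O(e^{-\delta k})$, and it solves the adjoint system up to a remainder that is $O(k^{-N})$ for every $N$. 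Solving the adjoint system exactly with terminal data $\Phi_k(T)$ and estimating the difference by a Duhamel/energy argument yields genuine solutions $\phi_k$ that agree with $\Phi_k$ up to super-polynomially small errors. Normalizing so that $\|v_k(0)\|_{L^2\ip}=1$, I then have, uniformly on $[0,T]$, $\|\tau_k(t)\|_{L^2\ip}=O(k^{-1})$ (temperature is a factor $k^{-1}$ below velocity on this branch) and $\|v_k\|_{L^2((0,T)\times\mathcal O_2)}=O(e^{-\delta k})$.

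Inserting $\phi_k$ into the observability inequality gives a left side bounded below by $\|v_k(0)\|_{L^2\ip}^2=1$, while on the right the velocity term is $O(e^{-\delta k})$ because the beam avoids $\mathcal O_2$, and the temperature term is bounded globally by $\int_0^T\|\tau_k\|_{L^2\ip}^2\,dt=O(k^{-2})$ even when $\mathcal O_3=\ip$, since $\tau_k$ is intrinsically of order $k^{-1}$. Thus observability would force $1\le C\,O(k^{-2})$, which is false for large $k$, so the system is not null controllable. Two structural facts drive the argument: (i) $\mathcal O_2\subsetneq\ip$ lets me localize the beam away from the velocity-observation region, rendering $\int_{\mathcal O_2}|v_k|^2$ negligible against the energy; and (ii) heat conduction damps temperature so strongly that the density mode has $\tau=O(k^{-1})v$, defeating temperature observation even on all of $\ip$. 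I expect the main obstacle to be the rigorous Gaussian-beam construction — deriving the correct polarization and amplitude for this non-self-adjoint, partially parabolic system and proving that the remainder and the boundary mismatch are super-polynomially small uniformly on $[0,T]$ — whereas the dispersion analysis and the final balancing of orders are comparatively routine. This mirrors the Gaussian-beam argument already used for Theorem \ref{thm1}.
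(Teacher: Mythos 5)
Your overall strategy (slow-branch Gaussian beam localized away from $\mathcal O_2$, temperature killed by the polarization hierarchy, contradiction with the dual observability inequality) is the right one, and your dispersion analysis — the third root tending to $-R\bt\br/(\lambda+2\mu)$ with polarization $\sigma:v:\tau=1:O(1/k):O(1/k^2)$ and vanishing group velocity — matches the paper's Lemmas \ref{lem:2.5}--\ref{lem:eigenfunction}. But there is a genuine gap, and it sits exactly where you placed your confidence: the claim that the one-term ansatz $\Phi_k\approx e^{\Lambda_k(T-t)}\,\mathbf p_k\,e^{ikx}e^{-k(x-x_0)^2/2}$ solves the adjoint system up to $O(k^{-N})$ remainders. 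It does not. Freezing $\Lambda_k$ and $\mathbf p_k$ at the carrier frequency $k$ ignores the variation of the symbol over the packet's Fourier width $\sqrt k$; concretely, already in the first adjoint equation the term $v_x$ produces, besides $ik\,p_v$, the factor $-k(x-x_0)p_v$, which on the bulk of the Gaussian is of relative size $k^{-1/2}$. So the residual is $O(k^{-1/2})$ relative to the beam, and the Duhamel correction it generates is of the same order. This is fatal for your argument, because your lower bound is taken on the \emph{subdominant} component: the velocity amplitude is only $O(k^{-1})$ relative to the density, so an $O(k^{-1/2})$ correction completely swamps $v_k(\cdot,0)$ and no lower bound $\|v_k(0)\|_{L^2\ip}\geq c>0$ (after normalization) survives. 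The same problem would occur if you borrowed the paper's Theorem \ref{thm4} construction verbatim: there the boundary-correction terms are only $O(\epsilon^{3/4})$, again much larger than the $O(\epsilon)$ velocity component, which is precisely why the paper never attempts to bound the left-hand side from below by the velocity.

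The paper circumvents this difficulty structurally rather than by sharpening the beam. Since $\rho_0\in H^1_m(0,L)$, it sets $\alpha=\rho_x$ and passes to the differentiated system \eqref{eq:interior_control_2}, whose null controllability in $Z=(L^2)^3$ is necessary for that of the original system (because $\partial_x:H^1_m(0,L)\to L^2(0,L)$ is onto). For the corresponding adjoint \eqref{eq:adjoint5}, the Fourier matrix $\bar A(\xi)$ has the slow-branch eigenvector $\left(1,\ \delta(\xi)/\br,\ -i\xi d_\delta(\xi)\right)$: now the velocity component is $O(1)$, the temperature is $O(1/\xi)$, and the observability left-hand side contains $\|\sigma(\cdot,0)\|_{L^2}^2$ with $\sigma$ the \emph{dominant} component. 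The lower bound therefore comes from the dominant variable, and errors of size $O(\sqrt\epsilon)$ or $O(\epsilon^{3/4})$ are harmless; Theorem \ref{thm8} then gives localization of both $\sigma^\epsilon$ and $v^\epsilon$ and global smallness $\|\phi^\epsilon\|^2\leq C\epsilon^2$, and one concludes as in Theorem \ref{thm5}. Your route could in principle be repaired — e.g.\ by superposing the \emph{exact} Fourier modes (so there is no residual at all) with a Schwartz-class cutoff, so that physical-space tails and boundary traces are $O(k^{-N})$ and hence $o(k^{-1})$, or by a genuine multi-term beam expansion — but as written the error budget does not close, and the super-polynomial smallness you assume is false for the ansatz you wrote down.
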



The proof of these results relies on the observability inequality. We know that the null controllability of a linear system is 
equivalent to a certain
observability inequality for the solutions of adjoint system (see \cite{CORON}, Chapter 2). To prove the negative results, we 
will construct particular solutions for the adjoint system such that the observability inequality cannot hold. In order to do that 
first we will consider the adjoint system in $\mathbb{R} \times (0,T)$ as a terminal value problem. We will construct highly localized
solutions known as ``Gaussian Beam''. Similar kind of construction has been used for hyperbolic equations by Ralston (\cite{ralston}) and
for wave equations by
Maci\`{a} and Zuazua (\cite{Macia-Zuazua}). We will 
prove that  solutions are localized in a small neighbourhood of any $x_0 \in \mathbb{R}.$ Thus given an observation set, we can
always find an interval away from the observation set such that the solutions are localized in that interval. Using this we are able to
prove the negative results. To the author's best knowledge these are new  results regarding controllability issues of Navier-Stokes
system for non-barotropic fluid. 

In Theorem \ref{thm3}, we proved a positive result when  controls acting everywhere in the equation. The question then arises: whether positive
results could be obtained by using control supported in a small, but moving region, as in \cite{MRR,SRZ}. Rosier and Rouchon in \cite{RR} proved 
that the structurally damped wave equation in one dimension is  not null controllable by a boundary control. Later on 
Martin, Rosier and Rouchon in \cite{MRR} proved that  the same equation in one dimension with periodic boundary conditions,
 is null controllable with a moving distributed control for sufficiently large time.
Chaves-Silva, Rosier and Zuazua in \cite{SRZ} extend the above result to higher dimension. 
The structure
of the system considered by the authors in \cite{MRR,SRZ}, in some sense, is similar to the linearized compressible, barotropic
Navier-Stokes system in one dimension as well as in higher dimension. 
These issues will be discussed in a future work which is in  progress. 

We have studied in this paper the null controllability of the linearized compressible Navier-Stokes system only. The ``Gaussian Beam''
construction is used to show negative results. However one may use other techniques based on the use of nonlinearity 
(see \cite{CORON} for example) to achieve controllability results for the full nonlinear system.

There have been some results regarding the control of compressible barotropic fluid models in recent years.
Amosova in \cite{AMO} considers compressible Navier-Stokes system for 
viscous barotropic fluid in one dimension in Lagrangian coordinates in a bounded domain 
 $(0, 1)$ with Dirichlet boundary condition. She proves local
exact controllability to trajectories for the velocity in any time $T > 0,$ using a localized interior control
 on the velocity equation, provided that the initial density is
already on the targeted trajectory  and initial condition lies in $H^1(0,1) \times H^1_0(0,1).$

Ervedoza, Glass, Guerrero and Puel in \cite{EGGP} consider the compressible Navier-Stokes system in
one space dimension in a bounded domain $(0, L)$. They prove local exact controllability to constant
states $(\br , \bar u )$ with $\bar \rho > 0, \bar u \neq  0$ using two boundary controls, both for density and velocity, in time
$\displaystyle T > \frac{L}{|\bar u|}$ when initial condition lies in $H^3(0,L) \times H^3(0,L).$

Chowdhury, Ramaswamy and Raymond in \cite{CRR} consider the  compressible barotropic 
Navier-Stokes system linearized around a constant steady state $(Q_0 , 0)$ with $Q_0 > 0$ in a bounded domain $(0,\pi)$. 
They proved that the linearized system
is not null controllable by a localized control or by boundary control. They also proved that the linearized system is null controllable
by an interior control acting everywhere in 
the velocity equation when initial condition lies in $H^1_m(0,\pi) \times L^2(0,\pi).$ 

Chowdhury in \cite{SC} considers the  compressible barotropic 
Navier-Stokes system linearized about a constant steady state $(Q_0 , V_0)$ with $Q_0 > 0,  V_0 > 0$ in $(0,L)$ with 
Dirichlet boundary condition and an interior control on the velocity equation acting on open subset $(0,l) \subset (0,L).$  
He proves that the system is approximately controllable in $L^2(0,L) \times L^2(0,L)$ when $\displaystyle T > \frac{L - l}{V_0}.$ 
He also proves a similar result in 
two dimension.

Chowdhury, Mitra, Ramaswamy and Renardy in \cite{CMRR2} consider the  compressible barotropic 
Navier-Stokes system linearized about a constant steady state $(Q_0 , V_0)$ with $Q_0 > 0,  V_0 > 0$ in $(0,2\pi)$ with 
periodic boundary condition. They proved that the linearized system is null controllable by a localized velocity control when 
$\displaystyle T > \frac{2\pi}{V_0}$ and initial condition lies in $H^1_m(0,2\pi) \times L^2(0,2\pi).$ 

Our linearized system \eqref{eq:linearized} - \eqref{eq:ini+bdy} is similar to the linearized system considered by the 
authors in \cite{CRR}. So we expect similar controllability results. Their method is based on explicit expression for 
eigenfunctions and the behaviour of the spectrum of the
linearized operator. They proved that there is an accumulation point in the spectrum of the linearized operator. 
This system behaves very badly with respect to controllability properties and a similar type of 
controllability behaviour is also observed in \cite{RR,MS,MR} for different types
of systems where an accumulation point is present in the spectrum of linearized operator.
But the method used in \cite{CRR} does not seem to fit very well in our case. In fact one can prove that there is an accumulation point
in the spectrum of the linearized operator considered here, for certain 
boundary condition. But the expressions of eigenvalues and eigenfunctions are complicated. 
So here we use Gaussian Beam approach to achieve the negative results. This technique does not require the knowledge of the spectrum and it 
seems to extend to higher dimension also. 

The controllability properties  are completely different, if we consider compressible Navier-Stokes system
linearized around non null velocity. For barotropic fluid, the system linearized around $(Q_0,0)$ is not controllable in any time $T$
by localized interior control but the system linearized around $(Q_0, V_0)$ is null controllable by localized interior control for large
time $T.$ It is interesting to note that there is no accumulation point in the spectrum of the linearized operator in the latter case and 
better controllability behaviour at least for $T$ large enough(see \cite{CMRR2}). 
But the question remains what
happens when time $T$ is small enough. Our results answer this question in the negative.

We consider the compressible non-barotropic Navier-Stokes equation linearized around constant steady state 
$(\br, \bar v, \bt),$ $\br > 0, \bar v > 0, \bt > 0$ 
\begin{equation} \label{eq:blinearized1}
\begin{array}{l}
\displaystyle
 \rho_t  + \bar v \rho_x  + \br \;u_x \;\; = f \chi_{\mathcal{O}_1},  \mbox{ in } (0,L) \times (0,T), \\ [2.mm]
\displaystyle
 u_t - \frac{\lambda + 2\mu}{\br} u_{xx}  + \frac{R \bt}{\br}\;\rho_x  + \bar v u_x +
R \theta_x  \; = g \chi_{\mathcal{O}_2},  \mbox{ in } (0,L) \times (0,T), \\[3.mm]
\displaystyle
\theta_t (x,t) -\frac{\kappa}{\br c_v} \theta_{xx} + \frac{R \bt}{c_v} u_x + \bar v \theta_x \; = h \chi_{\mathcal{O}_3},
 \mbox{ in } (0,L) \times (0,T),
\\[3.mm] \displaystyle
\rho(0) = \rho_0,\quad \quad   u(0)= u_0,  \mbox{ and } \quad \theta(0) = \theta_0, \quad \mbox{ in }  (0,L),
\\ [2.mm] \displaystyle
\rho(0,t) = 0, \;\;\; u(0,t) = 0 = u(L,t), \;\;  \forall \ t \in (0,T),
\\ [2.mm] \displaystyle
\theta(0,t) = 0 = \theta(L,t), \;\;  \forall \ t \in (0,T).
\end{array}
\end{equation}
We prove the following theorem. 
\begin{thm} \label{thm1.4}
Let $${\mathcal{O}}_1 = (l_1,l_2) \subset \ip, \quad  {\mathcal{O}}_2 \subseteq \ip \quad {\mathcal{O}}_3 \subseteq \ip,$$ 
i.e., $\mathcal{O}_1$ is a proper subset of $\ip.$ Let us assume 
$(\rho_0,u_0,\theta_0) \in L^2(0,L) \times L^2(0,L) \times L^2(0,L)$. 
If  
$\displaystyle T < \max\left\{\frac{l_1}{\bar v}, \frac{L-l_2}{\bar v} \right\},$ then
the system \eqref{eq:blinearized1} is not null controllable by localized interior controls
 $f \in L^2(0,T;L^2(\mathcal{O}_1)),$ $g \in L^2(0,T;L^{2}({\mathcal{O}}_2))$ and $h \in L^2(0,T;L^{2}({\mathcal{O}}_3))$ 
 acting on density, velocity and temperature equation respectively. 
\end{thm}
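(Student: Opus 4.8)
The plan is to show that the observability inequality equivalent to null controllability fails when $T<\max\{l_1/\bar v,(L-l_2)/\bar v\}$, by constructing highly concentrated (Gaussian beam) solutions of the adjoint system exactly as in the proof of Theorem~\ref{thm1}; the new feature is that the beam now travels along the characteristics of the transported density equation, and it is this travel that forces the restriction on $T$. Writing $(\sh,\vh,\ph)$ for the adjoint unknowns, the adjoint of \eqref{eq:blinearized1}, as a terminal value problem, is
\[
\begin{cases}
-\sh_t-\bar v\,\sh_x-\dfrac{R\bt}{\br}\,\vh_x=0,\\[2mm]
-\vh_t-\dfrac{\lambda+2\mu}{\br}\,\vh_{xx}-\br\,\sh_x-\bar v\,\vh_x-\dfrac{R\bt}{c_v}\,\ph_x=0,\\[2mm]
-\ph_t-\dfrac{\kappa}{\br c_v}\,\ph_{xx}-R\,\vh_x-\bar v\,\ph_x=0,
\end{cases}
\]
with the homogeneous boundary conditions dual to \eqref{eq:blinearized1} and arbitrary terminal data at $t=T$. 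By the duality between null controllability and observability (see \cite{CORON}, Chapter~2), the system \eqref{eq:blinearized1} is null controllable in time $T$ if and only if there is a constant $C>0$ with
\[
\|(\sh,\vh,\ph)(\cdot,0)\|_{(L^2\ip)^3}^2\le C\int_0^T\Big(\int_{\mathcal O_1}|\sh|^2+\int_{\mathcal O_2}|\vh|^2+\int_{\mathcal O_3}|\ph|^2\Big)\,dx\,dt
\]
for every adjoint solution. I would disprove this by producing adjoint solutions whose left-hand side stays bounded below while the right-hand side tends to $0$.

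The decisive structural fact is that the first equation is a pure transport equation in $\sh$: the density mode is undamped, whereas the velocity and temperature modes are parabolic. I would therefore look for a WKB/Gaussian-beam ansatz concentrated in the density component,
\[
\sh=k^{1/4}a(x,t)\,e^{ik\psi(x,t)},\qquad \operatorname{Im}\psi\ge c\,\dist(x,\gamma(t))^2,
\]
with $\vh,\ph$ obtained as corrections and a large parameter $k$. Inserting this into the adjoint system and matching powers of $k$, the $O(k)$ balance in the first equation gives the eikonal equation $\psi_t+\bar v\psi_x=0$, so the rays are the straight characteristics $\gamma(t)=x_0+\bar v t$; since transport is non-dispersive, the leading density profile is a rigidly translating Gaussian of width $\sim k^{-1/2}$ centred at $\gamma(t)$. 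The strong (backward-parabolic) diffusion in the second and third equations forces the parabolic modes to respond quasi-statically, $\vh\sim\sh_x/k^2$ and $\ph\sim\vh_x/k^2$, so that $\|\vh\|_{L^2}=O(k^{-1})$ and $\|\ph\|_{L^2}=O(k^{-2})$; their feedback $-\tfrac{R\bt}{\br}\vh_x$ is of the same order as $\sh$ and hence enters only the $O(1)$ amplitude transport equation for $a$, a well-posed first-order equation along the rays that keeps $a$ bounded and nonvanishing on $[0,T]$. Cutting the beam off by a smooth function supported in $\ip$ and equal to $1$ near $\gamma([0,T])$ produces commutator errors supported away from the ray, hence exponentially small in $k$; a standard energy estimate for the well-posed adjoint system then upgrades this approximate solution to an exact one differing from it by a term negligible as $k\to\infty$.

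It remains to place the ray so that it misses the observation regions. Since $\|\vh\|_{L^2}=O(k^{-1})$ and $\|\ph\|_{L^2}=O(k^{-2})$ uniformly in $t$, the contributions $\int_0^T\!\int_{\mathcal O_2}|\vh|^2$ and $\int_0^T\!\int_{\mathcal O_3}|\ph|^2$ tend to $0$ no matter how large $\mathcal O_2$ and $\mathcal O_3$ are, so only $\sh$ on $\mathcal O_1=(l_1,l_2)$ is relevant. As $\bar v>0$ the centre moves rightward, $\gamma(t)=x_0+\bar v t$. If $T<l_1/\bar v$ I would take $x_0\in(0,\,l_1-\bar v T)$, so that $\gamma([0,T])$ lies in a compact subinterval of $(0,l_1)$; if instead $T<(L-l_2)/\bar v$ I would take $x_0\in(l_2,\,L-\bar v T)$, so that $\gamma([0,T])$ lies in a compact subinterval of $(l_2,L)$. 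In either case $\dist(\gamma([0,T]),[l_1,l_2])\ge\delta>0$, so the Gaussian tail gives $\int_0^T\!\int_{\mathcal O_1}|\sh|^2=O(e^{-ck\delta^2})\to0$, whereas $\|(\sh,\vh,\ph)(\cdot,0)\|_{(L^2\ip)^3}^2\ge\|\sh(\cdot,0)\|_{L^2\ip}^2$ is bounded below by a positive constant independent of $k$ (the translated Gaussian has fixed $L^2$ mass and $a(\cdot,0)\ne0$). Letting $k\to\infty$, the observability inequality fails, which proves that \eqref{eq:blinearized1} is not null controllable when $T<\max\{l_1/\bar v,(L-l_2)/\bar v\}$.

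The main difficulty is the Gaussian-beam construction for this coupled hyperbolic--parabolic system: one must check that the density (hyperbolic) mode carries $O(1)$ $L^2$ mass and persists, while the velocity and temperature (parabolic) modes are genuinely slaved to it and remain negligible, and one must control both the WKB residual and the boundary cutoff in a norm compatible with the energy estimate for the adjoint system. This is exactly where the case $\bar v\ne0$ departs from Theorem~\ref{thm1}: there the ray is stationary and can avoid $\mathcal O_1$ for all $T$, whereas here it travels at speed $\bar v$ and can be confined to one of the control-free intervals $(0,l_1)$, $(l_2,L)$ only when $T<\max\{l_1/\bar v,(L-l_2)/\bar v\}$.
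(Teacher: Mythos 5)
Your proposal is correct in strategy, and its geometric heart --- the beam travels along the transport characteristics at speed $\bar v$, so it can be confined to one of the control-free intervals $(0,l_1)$ or $(l_2,L)$ for all $t\in[0,T]$ exactly when $T<l_1/\bar v$ or $T<(L-l_2)/\bar v$ --- is precisely the paper's argument, including the choice $x_0$ so that the swept interval stays inside the domain and misses $(l_1,l_2)$. Where you differ is in how the concentrated solutions are produced. The paper does not redo any construction for $\bar v\neq 0$: it observes that the Galilean change of variables $\tilde\sigma(x,t)=\sigma(x-\bar v(T-t),t)$ (and likewise for $v,\phi$) maps solutions of the whole-line adjoint problem \eqref{eq:badjointR} onto solutions of the $\bar v=0$ system \eqref{eq:adjoint2}, so the beams of Theorem \ref{thm4} transfer verbatim with moving center $x(t)=x_0-\bar v(T-t)$, and the passage from whole-line beams to solutions of \eqref{eq:badjoint} on $(0,L)$ is handled by the boundary-correction scheme of Theorem \ref{thm5}. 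You instead rebuild the beam from scratch by a WKB expansion: eikonal equation $\psi_t+\bar v\psi_x=0$, parabolic modes slaved at orders $O(k^{-1})$ and $O(k^{-2})$, an $O(1)$ damped amplitude equation along rays, then cutoff plus an energy/Duhamel estimate to pass to exact solutions. Your formal scalings are consistent with the paper's exact computation (the eigenvalue branch $\delta(\xi)\to\omega_0=R\bt/\nu_0$ stays bounded, and the eigenvector $(1,\,i\delta(\xi)/\br\xi,\,d_\delta(\xi))$ gives exactly your $O(k^{-1})$, $O(k^{-2})$ hierarchy and the nonvanishing damped amplitude), so the route is viable; but the steps you assert --- justification of the expansion for this coupled hyperbolic--parabolic system, the residual estimate, the well-posedness estimate absorbing it, and the claimed exponential smallness of the cutoff commutators --- constitute the real analytical work, and it is exactly this work that the paper's one-line change of variables renders unnecessary. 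Two harmless discrepancies: your adjoint system is the unweighted $L^2$ adjoint, whose coupling coefficients are the transpose of those in \eqref{eq:badjoint} (the paper adjoints with respect to the weighted inner product on $Z$); the two are related by a diagonal rescaling of the unknowns and yield equivalent observability inequalities. And your tails are Gaussian (exponentially small off the ray) while the paper's compact-frequency cutoff gives only the polynomial bound $C\sqrt{\epsilon}$; either suffices, since one only needs the right-hand side of the observability inequality to tend to zero while $\|\sigma(\cdot,0)\|_{L^2}$ stays bounded below.
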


 As a corollary of the above Theorem, one can rule out null controllability of compressible barotropic Navier-Stokes system linearized around
 constant steady state $(\br, \bar v)$ in small time $T,$ using a boundary control or localized interior control. 
\begin{cor}
 We consider compressible barotropic Navier-Stokes system 
linearized around a constant steady state $(\br, \bar v)$, $\br > 0, \bar v > 0$ in $(0,L) \times (0,T)$ or 
in $(0,2\pi) \times (0,T)$ as in \cite{CMRR2,SC}.
\begin{itemize}
\item[(i)] For initial condition belonging to  $L^2(0,L) \times L^2(0,L),$ the system with Dirichlet boundary condition is not null 
controllable at any time $T > 0$ by a interior control acting only in the velocity equation. The control may act in a non empty open subset
of $(0,L)$ or in the whole domain $(0,L).$
\item [(ii)] The  system with periodic boundary condition
 is not null controllable by interior control localized in $(l_1,l_2) \subset (0,2\pi),$ acting only in the velocity equation when initial
 condition lies in $H^1_m(0,2\pi) \times L^2(0,2\pi)$ and time
 $\displaystyle T < \max\left\{\frac{l_1}{\bar v}, \frac{2\pi-l_2}{\bar v} \right\}.$ 
 \item [(iii)] For initial condition belonging to $L^2(0,L) \times L^2(0,L),$ the same system is not null controllable by boundary control 
 if time $\displaystyle T < \frac{L}{\bar v}.$
\end{itemize}
\end{cor}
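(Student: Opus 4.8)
The plan is to run the Gaussian beam construction from the proof of Theorem~\ref{thm1.4}, but now for the $2\times 2$ barotropic system, and to read off (i)--(iii) from the way the concentrated adjoint solutions meet the observation region. First I would record the adjoint system: writing $(\sigma,v)$ for the time-reversed adjoint of $(\rho,u)$ and $\nu=(\lambda+2\mu)/\br$, it takes the form
\begin{equation*}
 \sigma_t + \bar v\,\sigma_x + b\,v_x = 0, \qquad v_t + \br\,\sigma_x + \nu\,v_{xx} + \bar v\,v_x = 0,
\end{equation*}
where $b=p'(\br)/\br>0$ comes from the barotropic pressure law. The structural point, identical to the non-barotropic case, is that the principal symbol is $\mathrm{diag}(0,\nu\xi^{2})$: there is one parabolic (velocity) mode and one first order hyperbolic (density) mode propagating at speed $\bar v$, whose polarization vector is $(1,0)+O(\xi^{-1})$. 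Consequently the density-mode beam supplied by the construction of Theorem~\ref{thm1.4} is concentrated along a ray $x=x_{0}+\bar v\,t$, carries mass of order $1$ in the $\sigma$ component, and only mass of order $\xi^{-1}$ in the $v$ component, where $\xi$ is the (large) frequency of the beam.

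For part (i) I would exploit the polarization rather than any geometry. Here the control acts only on $u$, so null controllability in $L^{2}\times L^{2}$ is equivalent to the observability estimate $\|\sigma(0)\|_{L^{2}}^{2}+\|v(0)\|_{L^{2}}^{2}\le C\int_{0}^{T}\!\int_{\mathcal{O}_2}|v|^{2}$, with $\mathcal{O}_2\subseteq(0,L)$ possibly all of $(0,L)$. Launching the density-mode beam at $t=0$ from the inflow end $x=0$ (recall $\bar v>0$, so $\rho$ is prescribed at $x=0$ and simply advects out through $x=L$, with no reflection), the construction lives on $(0,L)\times(0,T)$ with exponentially small boundary error, and one has $\|\sigma(0)\|_{L^{2}}\sim 1$ while $\int_{0}^{T}\!\int_{0}^{L}|v|^{2}\lesssim \xi^{-2}\to 0$. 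Letting $\xi\to\infty$ violates the estimate for every $T>0$, which is (i), and the localized case is then immediate. It is worth noting that this is sharp in the data: measuring $\sigma$ in $(H^{1}_{m})'$ instead of $L^{2}$ would replace $\|\sigma(0)\|_{L^{2}}\sim1$ by $O(\xi^{-1})$, exactly matching the velocity mass, which is why the $H^{1}_{m}$ positive result is not contradicted.

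For parts (ii) and (iii) the mechanism is instead geometric, and only small $T$ is ruled out. In the periodic case (ii) the observation set is the proper subinterval $(l_{1},l_{2})$, so I would choose the ray so that the whole beam, and in particular its $v$ component, stays in the complement of $(l_{1},l_{2})$ for all $t\in(0,T)$. A beam placed just to the left of $(l_{1},l_{2})$ and started near $x=0$ remains in $(0,l_{1})$ while $t<l_{1}/\bar v$, and a beam placed just to the right and started near $l_{2}$ remains in $(l_{2},2\pi)$ while $t<(2\pi-l_{2})/\bar v$; hence for $T<\max\{l_{1}/\bar v,(2\pi-l_{2})/\bar v\}$ one of the two keeps $v$ exponentially small on $(l_{1},l_{2})$ while the beam retains energy bounded below in $(H^{1}_{m})'\times L^{2}$, defeating observability. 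For the boundary-control case (iii) the relevant observation is a trace at the endpoints, and the same argument applies with the ray required to avoid the controlled boundary: since a density beam needs time $L/\bar v$ to cross $(0,L)$, for $T<L/\bar v$ a beam launched at the far end never reaches the observed boundary, so its boundary observation is negligible while its $L^{2}\times L^{2}$ energy is of order one.

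The main obstacle is not the geometry, which is elementary, but making the beams honest solutions of the bounded-domain adjoint problem. I would need the same three ingredients as in the proof of Theorem~\ref{thm1.4}: (a) the eikonal and transport equations giving the phase and the leading amplitude along the ray, together with the $O(\xi^{-1})$ bound on the velocity polarization; (b) an estimate showing the WKB remainder is negligible in the energy norm after solving the adjoint system with the beam as data; and (c) verification that, on the chosen ray, the beam and its remainder are exponentially small near the (Dirichlet or periodic) boundary, so that the homogeneous boundary conditions are met up to a negligible error. The only genuinely new bookkeeping beyond Theorem~\ref{thm1.4} is tracking the correct dual norms, namely $L^{2}$ for (i) versus $(H^{1}_{m})'$ for (ii), since it is precisely this choice that separates the all-time obstruction in (i) from the small-time obstruction in (ii).
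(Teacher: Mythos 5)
Your parts (ii) and (iii) follow essentially the paper's own route: both are read off from the beam of Theorem \ref{thm1.4} by choosing $x_0$ and $\eta$ so that the moving interval around $x(t)=x_0-\bar v(T-t)$ stays inside the domain and away from the observation region (the set $(l_1,l_2)$, respectively the observed boundary point) for all $t\in[0,T]$, and this geometry is exactly what produces the thresholds $\max\{l_1/\bar v,(2\pi-l_2)/\bar v\}$ and $L/\bar v$.

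The gap is in (i). Your argument there rests on the claim that a beam launched near $x=0$ at $t=0$ ``advects out through $x=L$ with no reflection'' so that ``the construction lives on $(0,L)\times(0,T)$ with exponentially small boundary error.'' This is false for the \emph{adjoint} problem: dualizing the forward condition $\rho(0,t)=0$ puts the density boundary condition at the outflow end, $\sigma(L,t)=0$ (see \eqref{eq:badjoint}), i.e.\ precisely at the end your beam is heading for. When the center of the beam crosses $x=L$, the trace $\sigma^\epsilon(L,\cdot)$ is of order one in $L^2(0,T)$ (pointwise of size $\epsilon^{-1/4}$), not exponentially small; the boundary corrector (the analogue of $(\widetilde\sigma^\epsilon,\widetilde v^\epsilon)$ in the proof of Theorem \ref{thm5}) is then of order one and its velocity component is no longer negligible, so neither the lower bound on the left-hand side nor the upper bound on the right-hand side of the observability inequality survives. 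Put differently: an adjoint solution is determined by its terminal data at $t=T$, and a beam sitting inside $(0,L)$ at $t=0$ has terminal data concentrated at $x_0=x(0)+\bar vT$; once $T\ge L/\bar v$ this point lies outside $(0,L)$, the restriction of that terminal data to the domain is negligible, and the honest bounded-domain solution it generates is negligible at all times --- it is not your beam, which cannot re-enter through $x=L$ where $\sigma$ is forced to vanish. Consequently your construction, exactly like the paper's, requires the beam to remain in $(0,L)$ throughout $[0,T]$, i.e.\ $\bar vT<L$, and it proves (i) only for $T<L/\bar v$; the polarization observation (the $v$-component being $O(1/\xi)$ globally, which is the correct mechanism making the localization of the control irrelevant) does not by itself remove this cap. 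Obtaining the statement for \emph{every} $T>0$, as the corollary asserts, requires a mechanism that survives the flushing of the density through $x=L$; the traveling-beam construction does not supply one (and the paper, which states the corollary without proof after remarking that Theorem \ref{thm1.4} rules out controllability ``in small time,'' supplies none either). A secondary, more cosmetic point: in (ii) the relevant adjoint is the differentiated system \eqref{eq:adjoint5}--\eqref{eq:adjoint6}, for which the beam's velocity component is small only away from the concentration point (Theorem \ref{thm8}), and the smallness throughout the paper is algebraic, $O(\sqrt{\epsilon})$, not exponential; your geometric conclusion in (ii) is unaffected by this.
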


\begin{remark}
From the above corollary we see that the condition $T > \displaystyle \frac{L}{|\bar v|}$ in Ervedoza,Glass,Guerrero and Puel (\cite{EGGP})
is natural.
\end{remark}

Next we will show that, our method can be extended to higher dimension also. For simplicity we consider  
 the compressible barotropic Navier-Stokes system in two-dimensional bounded domain $\Omega$, linearized around a constant steady 
state solution $(\bar \rho, 0, 0)$ , $\br > 0,$ 
\begin{equation} \label{eq:linearized2D}
 \begin{array}{l}
\displaystyle
\rho_t +  \bar \rho \ \mathrm{div}\;{\bf u} \;\;= f \chi_{\mathcal{O}_1  }, \mbox{ in } \Omega \times (0,T), 
\\[2.mm] \displaystyle
{\bf u} - \frac{\mu}{\br}  \Delta {\bf u} - \frac{\lambda + \mu}{\br}\nabla[\mathrm{div} \ {\bf u} ]
+\;a \gamma \br^{\gamma-2} \nabla \rho \;=\;{\bf g}  \chi_{{\mathcal O}_2} ,  \mbox{ in } \Omega \times (0,T),
\\[2.mm] \displaystyle
\rho(0) = \rho_0 \quad \mbox{and}\quad   {\bf u}(0)= {\bf u}_0, \quad  \mbox{ in } \Omega,
\\ [2.mm] \displaystyle
 {\bf u} = {\bf 0}  \ \ \mbox{ on } \partial \Omega \times (0,T).
\end{array}
\end{equation}
where ${\bf u} = (u_1,u_2)$  and $\mathcal{O}_1, \mathcal{O}_2$ are open subsets of $\Omega.$ 
We obtain the following negative
null controllability result for the system \eqref{eq:linearized2D}.
\begin{thm} \label{thm1.5}
 Let $$\mathcal{O}_1 \subset \Omega, \quad \mathcal{O}_2 \subseteq \Omega,$$
 i.e., $\mathcal{O}_1$ is a proper open subset of $\Omega.$ Let us assume that $(\rho_0, {\bf u}_0) \in (L^2(\Omega))^3.$ Then 
 the system \eqref{eq:linearized2D} is not null controllable  in time any $T > 0$  by interior controls $f \in L^2(0,T;L^2(\mathcal{O}_1))$ 
 and ${\bf g}  \in (L^2(0,T;L^2(\mathcal{O}_2)))^2.$
\end{thm}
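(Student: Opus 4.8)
The plan is to argue by duality. Since null controllability of the linear system \eqref{eq:linearized2D} is equivalent to an observability inequality for the associated adjoint system (see \cite{CORON}, Chapter 2), it suffices to produce a family of adjoint solutions for which that inequality must fail. Writing the adjoint unknowns as a density $\sigma$ and a velocity $\mathbf{v}=(v_1,v_2)$, the adjoint is the backward problem
\begin{align*}
&-\sigma_t - a\gamma\br^{\gamma-2}\,\mathrm{div}\,\mathbf{v} = 0, \quad \mbox{ in } \Omega\times(0,T),\\
&-\mathbf{v}_t - \frac{\mu}{\br}\Delta\mathbf{v} - \frac{\lambda+\mu}{\br}\nabla[\mathrm{div}\,\mathbf{v}] - \br\,\nabla\sigma = 0, \quad \mbox{ in } \Omega\times(0,T),\\
&\mathbf{v}=\mathbf{0} \quad \mbox{ on } \partial\Omega\times(0,T),
\end{align*}
with terminal data prescribed at $t=T$, and the inequality to be disproved reads
\[
\|(\sigma,\mathbf{v})(0)\|^2_{(L^2(\Omega))^3} \le C\left(\int_0^T\!\!\int_{\mathcal{O}_1}|\sigma|^2\,dx\,dt + \int_0^T\!\!\int_{\mathcal{O}_2}|\mathbf{v}|^2\,dx\,dt\right).
\]
The structural feature I would exploit is the asymmetry between the two equations: the continuity equation for $\sigma$ carries no smoothing, whereas the momentum equation for $\mathbf{v}$ is parabolic. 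Since $\mathcal{O}_2$ may equal all of $\Omega$, the velocity part cannot be hidden from the observation; the construction must therefore load essentially all the energy into the density component, concentrated spatially away from the proper subset $\mathcal{O}_1$, while forcing $\mathbf{v}$ to be uniformly small.

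Following the idea used in the one-dimensional Theorems \ref{thm1} and \ref{thm3}, I would first solve the adjoint system on $\mathbb{R}^2\times(0,T)$ and build a highly concentrated WKB solution (a two-dimensional Gaussian beam). Fix a point $x_0\in\Omega\setminus\overline{\mathcal{O}_1}$ and a direction $\xi_0\in\mathbb{R}^2\setminus\{0\}$, and seek
\[
\sigma = \tau^{1/2}A(x,t)\,e^{i\tau\phi(x)},\qquad \mathbf{v} = \tau^{-1/2}\mathbf{B}(x,t)\,e^{i\tau\phi(x)},
\]
with large parameter $\tau$ and complex phase $\phi(x)=\xi_0\cdot(x-x_0)+\tfrac{i}{2}|x-x_0|^2$, so that $|e^{i\tau\phi}|=e^{-\frac{\tau}{2}|x-x_0|^2}$ concentrates in a ball of radius $O(\tau^{-1/2})$ about $x_0$. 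Because $\bar v=0$, I expect the relevant ray to be stationary: inserting the ansatz and collecting the leading-order (in $\tau$) terms of the momentum equation gives the purely algebraic, invertible relation
\[
\Big(\tfrac{\mu}{\br}|\nabla\phi|^2 I + \tfrac{\lambda+\mu}{\br}\,\nabla\phi\otimes\nabla\phi\Big)\mathbf{B} = i\br A\,\nabla\phi,
\]
whose matrix is positive definite since $\mu>0$ and $\lambda+\mu\ge0$; this slaves the velocity amplitude $\mathbf{B}$ to $A$ and shows $\mathbf{v}$ is smaller than $\sigma$ by a factor $O(\tau^{-1})$. The leading-order terms of the continuity equation then yield a transport/Schrödinger equation for the scalar envelope $A$ along the stationary ray $x(t)\equiv x_0$, keeping $A$ of order one and the packet concentrated at $x_0$ for every $t\in[0,T]$.

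Since the beam is exponentially small outside a shrinking ball about the interior point $x_0$, multiplying by a fixed cut-off $\chi\in C_c^\infty(\Omega)$ equal to $1$ near $x_0$ produces functions satisfying $\mathbf{v}=\mathbf{0}$ on $\partial\Omega$ and solving the adjoint system up to a remainder $r_\tau$ that is negligible in the natural norm. Using $L^2$-well-posedness of the adjoint, I would let $(\sigma,\mathbf{v})$ be the exact solution with the beam's terminal data; the difference solves the adjoint with source $r_\tau$ and zero terminal data, so an energy estimate bounds it by $\|r_\tau\|$ and the exact solution inherits the beam's concentration. With this normalization $\|\sigma(0)\|_{L^2}\ge c_0>0$, whereas, writing $d=\dist(x_0,\mathcal{O}_1)>0$,
\[
\int_0^T\!\!\int_{\mathcal{O}_1}|\sigma|^2\,dx\,dt \le C\,\tau\,e^{-\tau d^2}T \to 0, \qquad \int_0^T\!\!\int_{\mathcal{O}_2}|\mathbf{v}|^2\,dx\,dt \le T\,\|\mathbf{v}\|^2 = O(\tau^{-2}) \to 0,
\]
as $\tau\to\infty$. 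Thus the right-hand side of the observability inequality tends to $0$ while the left-hand side stays bounded below, forcing $C\to\infty$; observability fails, and \eqref{eq:linearized2D} is not null controllable.

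The main difficulty will be the interplay between the parabolic--hyperbolic coupling and the two-dimensional, vectorial nature of the beam. One must check that the $O(\tau^2)$ diffusion $\frac{\mu}{\br}\Delta\mathbf{v}+\frac{\lambda+\mu}{\br}\nabla[\mathrm{div}\,\mathbf{v}]$ acting on the oscillatory--Gaussian profile neither destroys the localization nor promotes $\mathbf{v}$ to the order of $\sigma$; this is precisely what the positive definiteness of the polarization matrix above (and the anisotropy introduced by the $\nabla\,\mathrm{div}$ operator in two dimensions) must guarantee. Equally delicate is estimating the remainder $r_\tau$ after the cut-off and establishing the energy estimate needed to pass from the approximate beam to a genuine adjoint solution without losing concentration: because the continuity equation is only transported, this estimate cannot come from parabolic smoothing alone and must instead be obtained from a symmetrized energy weighted by $\br$ and $a\gamma\br^{\gamma-2}$.
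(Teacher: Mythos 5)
Your overall strategy (duality, then a family of adjoint solutions concentrated near a point $x_0$ away from $\mathcal{O}_1$, with the velocity component one order smaller than the density) is exactly the philosophy of the paper, and your leading-order algebra is consistent with it: your polarization matrix $\mu_0|\nabla\phi|^2 I+\gamma_0\,\nabla\phi\otimes\nabla\phi$ is the principal symbol whose inversion reproduces the paper's eigenvector $\bigl(1,\tilde\delta(\xi)\xi_1/(\br i|\xi|^2),\tilde\delta(\xi)\xi_2/(\br i|\xi|^2)\bigr)$, and your transport rate $b_1\br/(\mu_0+\gamma_0)$ is precisely the limit of $-\tilde\delta(\xi)$ in the lemma of Section 4. (The swap of the constants $\br$ and $a\gamma\br^{\gamma-2}$ between your adjoint and the paper's is only a different choice of duality pairing and is harmless.) The execution, however, differs in a way that opens a genuine gap: you work with an \emph{approximate} WKB solution in physical space and must control a remainder, whereas the paper has no remainder at all --- it places the terminal datum on the single bounded (``hyperbolic'') eigenvalue branch $\tilde\delta(\xi)$ of the Fourier symbol $\tilde A(\xi)$, so the representation formula \eqref{eq:2Drepresentation} is an \emph{exact} solution of the adjoint system on $\mathbb{R}^2\times(0,T)$, and only the (exponentially small in the 1D case, and similarly small here) boundary traces need correcting.

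The gap is quantitative and it breaks your final step. Count powers of $\tau$ in the momentum equation: the polarization relation kills the $O(\tau^{3/2})$ terms, but the next terms --- $\br\nabla A$, $2i\mu_0(\nabla\phi\cdot\nabla)\mathbf{B}$, $i\mu_0(\Delta\phi)\mathbf{B}$, and the analogous $\gamma_0$-terms --- enter at amplitude order $\tau^{1/2}$ and do not vanish at $x_0$. Since $\|e^{i\tau\phi}\|_{L^2(\mathbb{R}^2)}\sim\tau^{-1/2}$, this residual has $L^2(0,T;L^2)$ norm of order $1$, i.e.\ \emph{the same size} as $\|\sigma(\cdot,0)\|_{L^2}\sim 1$. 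Your energy estimate then only bounds the difference between the exact adjoint solution and the beam by $O(1)$, so neither the lower bound $\|\sigma(0)\|_{L^2}\ge c_0$ nor the smallness of the two observation terms survives, and the contradiction does not close; the leading-order transport equation for $A$ fixes the continuity residual but does nothing for this one. The fix is the standard corrector hierarchy: write $\mathbf{v}=\tau^{-1/2}(\mathbf{B}_0+\tau^{-1}\mathbf{B}_1)e^{i\tau\phi}$ (and similarly for $\sigma$), and use invertibility of the polarization matrix near $x_0$ --- where $\nabla\phi$ is a small complex perturbation of the real vector $\xi_0$ --- to choose $\mathbf{B}_1$ killing the $O(\tau^{1/2})$ momentum residual; all residuals then drop to $O(\tau^{-1})$ in $L^2$ and your perturbation argument goes through. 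As written, though, the proof is incomplete exactly at the step you flagged as ``delicate,'' and that step is not a technicality: without correctors the required smallness fails by a full order in $\tau$. This is the price of the physical-space ansatz that the paper's Fourier-side construction avoids entirely.
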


The plan of the paper is as follows. In section 2, we study the  control system linearized around a constant steady state $(\br,0,\bt)$ in 
one dimension. We prove 
Theorem \ref{thm1}, Theorem \ref{thm2} and Theorem \ref{thm3} here. In section 3, we study the  control system linearized around a constant 
steady state
$(\bar \rho, \bar v , \bt).$ Theorem \ref{thm1.4} is proved here. In section 4 we consider the linearized system in two 
dimension around constant 
steady state $(\br, {\bf 0}).$ We prove Theorem \ref{thm1.5} here.

{\bf Acknowledgement}: The author would like to thank Prof. Sylvain Ervedoza for providing important references on Gaussian Beams.
The author also would like to thank him and Prof. Mythily Ramaswamy for
very useful discussions which improved the initial version. The author acknowledges the financial support under the project "PDE Control"  from the Indo French
Centre for Applied Mathematics (IFCAM).

\section{Null Controllability of Compressible Non-Barotropic Navier Stokes System in
One Dimension Linearized about
\texorpdfstring{$(\br,0,\bt)$}{Lg} }

In this section, we will discuss interior null controllability of the system \eqref{eq:linearized} - \eqref{eq:ini+bdy}.
We introduce the positive constants 
 \begin{equation} \label{eq:constant}
 \nu_0 := \frac{\lambda+2\mu}{\br} ,\quad \quad k_0 := \frac{\kappa}{\br c_v}, \quad \quad b := \frac{\bt}{c_v}.
\end{equation}
Let us define  $Z = L^2(0,L) \times L^2(0,L) \times L^2(0,L)$ endowed with the inner product 
\begin{equation*}
 \left \langle
\begin{pmatrix}\rho \\ u \\ \theta \end{pmatrix},\begin{pmatrix}\sigma \\ v \\ \phi \end{pmatrix} \right \rangle_{Z} :=
R \bt \int_0^{L} \rho(x) \sigma(x)\;dx + \br^2 \int_0^{L} \;u(x) v (x) dx + 
\frac{\br^2 c_v}{\bt} \int_0^{L} \;\theta(x)  \phi(x) dx.
\end{equation*}
The following proposition about existence and uniqueness of the system \eqref{eq:linearized} - \eqref{eq:ini+bdy}  follows
easily from semigroup theory.
\begin{prop}
 Let $(\rho_0,u_0,\theta_0) \in Z$. Let us assume that $f \in L^2(0,T;L^2(\mathcal{O}_1))$, \\ $g \in L^2(0,T;L^{2}(\mathcal{O}_2))$
 and $h \in L^2(0,T;L^{2}(\mathcal{O}_3))$. Then \eqref{eq:linearized} - \eqref{eq:ini+bdy} has a unique solution $(\rho,u,\theta)$
 with $\rho \in L^2(0,T;L^2(0,L))$, $u \in L^2(0,T;H^1_0(0,L))$ and $\theta \in L^2(0,T;H^1_0(0,L))$. Moreover $(\rho,u,\theta) \in C([0,T];Z).$
\end{prop}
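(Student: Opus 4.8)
The plan is to recognize this as a standard linear abstract-evolution problem and invoke the Lumer--Phillips / Hille--Yosida machinery. First I would rewrite the system \eqref{eq:linearized}--\eqref{eq:ini+bdy} in the form $\frac{d}{dt}\begin{pmatrix}\rho\\u\\\theta\end{pmatrix}=\mathcal{A}\begin{pmatrix}\rho\\u\\\theta\end{pmatrix}+F$, where, using the constants \eqref{eq:constant},
\[
\mathcal{A}\begin{pmatrix}\rho\\u\\\theta\end{pmatrix}=\begin{pmatrix}-\br\,u_x\\[1mm]\nu_0\,u_{xx}-\frac{R\bt}{\br}\rho_x-R\theta_x\\[1mm]k_0\,\theta_{xx}-b\,u_x\end{pmatrix},\qquad F=\begin{pmatrix}f\chi_{\mathcal{O}_1}\\g\chi_{\mathcal{O}_2}\\h\chi_{\mathcal{O}_3}\end{pmatrix},
\]
with domain $D(\mathcal{A})=H^1(0,L)\times\bigl(H^2(0,L)\cap H^1_0(0,L)\bigr)\times\bigl(H^2(0,L)\cap H^1_0(0,L)\bigr)$, reflecting the Dirichlet conditions on $u,\theta$ and the transport (first-order) nature of the density equation, which needs no boundary condition. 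The key point of the weighted inner product on $Z$ is that it is engineered precisely so that $\mathcal{A}$ becomes dissipative: the cross terms coming from the coupling $\rho_x\leftrightarrow u_x$ and $\theta_x\leftrightarrow u_x$ cancel after integration by parts, leaving only the nonpositive contributions of the two viscous/diffusive terms.

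The central step is therefore to show that $\mathcal{A}$ generates a $C_0$-semigroup on $Z$. I would verify the two Lumer--Phillips hypotheses. For dissipativity, I compute $\langle\mathcal{A}(\rho,u,\theta),(\rho,u,\theta)\rangle_Z$, integrate by parts using the Dirichlet conditions on $u$ and $\theta$, and check that the first-order transport and coupling terms integrate to a pure boundary term or cancel against each other, while the second-order terms yield $-\br^2\nu_0\int_0^L u_x^2\,dx-\frac{\br^2 c_v}{\bt}k_0\int_0^L\theta_x^2\,dx\le 0$; one may need $\mathcal{A}-\omega I$ dissipative for some $\omega$ to absorb a leftover boundary term from the density transport, which is harmless for generation. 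For maximality, I would show that $\lambda I-\mathcal{A}$ is surjective for some (hence all large) $\lambda>0$, i.e. solve $(\lambda I-\mathcal{A})(\rho,u,\theta)=(\alpha,\beta,\delta)$ for given data in $Z$; one eliminates $\rho$ algebraically from the (first-order, invertible) density equation in terms of $u_x$, substitutes into the remaining two equations, and obtains an elliptic system for $(u,\theta)$ with Dirichlet data, whose solvability follows from the Lax--Milgram theorem once coercivity is checked for $\lambda$ large.

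Granting that $\mathcal{A}$ generates a $C_0$-semigroup $\{e^{t\mathcal{A}}\}_{t\ge0}$, the existence, uniqueness, and regularity assertions follow from the standard theory of inhomogeneous Cauchy problems: with $F\in L^2(0,T;Z)$ the mild solution
\[
\begin{pmatrix}\rho\\u\\\theta\end{pmatrix}(t)=e^{t\mathcal{A}}\begin{pmatrix}\rho_0\\u_0\\\theta_0\end{pmatrix}+\int_0^t e^{(t-s)\mathcal{A}}F(s)\,ds
\]
is the unique solution and lies in $C([0,T];Z)$, giving $(\rho,u,\theta)\in C([0,T];Z)$. The additional parabolic smoothing, namely $u,\theta\in L^2(0,T;H^1_0(0,L))$, is obtained by a standard energy estimate: testing the equations against the solution in the $Z$ inner product and integrating in time yields, from the same dissipative structure, a bound on $\int_0^T\bigl(\|u_x\|_{L^2}^2+\|\theta_x\|_{L^2}^2\bigr)\,dt$ in terms of the data, which with the Poincar\'e inequality and the Dirichlet conditions upgrades to the claimed $L^2(0,T;H^1_0)$ regularity; $\rho$ merely inherits $L^2(0,T;L^2(0,L))$ from its membership in $C([0,T];L^2)$.

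The step I expect to be the main obstacle is the dissipativity computation, since it is not self-adjointness but a delicate cancellation: one must track several boundary terms arising from integrating $\rho_x,\ u_x,\ \theta_x$ by parts and confirm that the weights $R\bt,\ \br^2,\ \br^2 c_v/\bt$ are exactly the ones that make the antisymmetric coupling terms vanish. The only genuinely non-trivial boundary contribution comes from the density transport term $-\br\int_0^L u_x\rho\,(R\bt)\,dx$ paired against $-\frac{R\bt}{\br}\int_0^L\rho_x u\,\br^2\,dx$; these combine to the boundary term $-R\bt\,\br\,[\rho u]_0^L$, which vanishes because $u$ satisfies homogeneous Dirichlet conditions. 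Once this cancellation is pinned down the dissipativity is immediate, and everything else is routine semigroup theory.
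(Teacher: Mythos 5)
Your proposal is correct and takes essentially the same approach the paper has in mind: the paper gives no proof at all, merely asserting that the result ``follows easily from semigroup theory,'' and your Lumer--Phillips argument (dissipativity in the weighted $Z$-inner product, maximality via Lax--Milgram after eliminating $\rho$, then the mild-solution formula and an energy estimate for the $L^2(0,T;H^1_0)$ regularity of $u,\theta$) is precisely the standard way to fill that in. One small slip to fix: in your display for $\mathcal{A}$ the third component should read $k_0\,\theta_{xx}-\frac{R\bt}{c_v}u_x$ (i.e.\ $-Rb\,u_x$, not $-b\,u_x$); with the paper's actual coefficient the $u$--$\theta$ cross terms carry matching weights $-R\br^2\int_0^L\theta_x u\,dx$ and $-R\br^2\int_0^L u_x\theta\,dx$, combining to $-R\br^2\left[u\theta\right]_0^L=0$, exactly the antisymmetric cancellation your scheme requires (and no shift $\mathcal{A}-\omega I$ is needed, since all boundary terms vanish outright under the Dirichlet conditions on $u$ and $\theta$).
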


\subsection{Observability Inequality}

The idea is to use the
adjoint system to derive certain identity  which can be used to obtain an observability
inequality, equivalent to null controllability. (See \cite{CORON}, Chapter 2). For this we consider the following
adjoint problem, 
 \begin{equation}  \label{eq:adjoint1}
 \begin{array}{lll}
-  \sigma_t  - \br \;v_x \;\; = 0, \mbox{ in } \ip \times (0,T),\\ [2.mm]
\displaystyle
 - v_t - \nu_0 v_{xx} - \frac{R \bt}{\br}\;\sigma_x - R \phi_x \; = 0, \mbox{ in } \ip \times (0,T), \\[3.mm]
\displaystyle
 - \phi_t  - k_0 \phi_{xx} - \frac{R \bt}{c_v} v_x \; = 0,\ \mbox{ in } \ip \times (0,T),
\\ [2.mm] \displaystyle
  \sigma(T) = \sigma_T,\quad v(T) = v_T ,\quad \phi(T) = \phi_T, \mbox{ in } \ip ,
  \\ [2.mm]
  v(0,t) = 0 = v(L,t) \quad \forall \ \ t > 0,\quad
 \phi(0,t) = 0 = \phi(L,t) \quad \forall \ \ t > 0.
\end{array}
\end{equation}
with $(\sigma_T,v_T,\phi_T) \in Z.$  The adjoint system \eqref{eq:adjoint1} is also well posed in $Z$. In fact we have 
\begin{prop}
 Let $(\sigma_T,v_T,\phi_T) \in Z.$ The system \eqref{eq:adjoint1} has a unique solution with
 $\sigma \in L^2(0,T;L^2(0,L))$, $v \in L^2(0,T;H^1_0(0,L))$ and $\phi \in L^2(0,T;H^1_0(0,L))$. Moreover $(\sigma,v,\phi)$ 
 belongs to $ C([0,T];Z).$
\end{prop}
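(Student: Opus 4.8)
The plan is to reverse time in \eqref{eq:adjoint1} and then invoke semigroup theory, using the fact that the weighted inner product on $Z$ is designed precisely to render the underlying operator dissipative. Setting $s=T-t$ transforms the terminal-value problem \eqref{eq:adjoint1} into the forward Cauchy problem $\frac{d}{ds}(\sigma,v,\phi)^{\top}=A^{*}(\sigma,v,\phi)^{\top}$, where
\[
A^{*}\begin{pmatrix}\sigma\\ v\\ \phi\end{pmatrix}
=\begin{pmatrix}\br\, v_{x}\\[1mm]
\nu_{0}v_{xx}+\frac{R\bt}{\br}\sigma_{x}+R\phi_{x}\\[1mm]
k_{0}\phi_{xx}+\frac{R\bt}{c_{v}}v_{x}\end{pmatrix},
\]
with domain $D(A^{*})=\{(\sigma,v,\phi)\in Z:\ \sigma\in H^{1}\ip,\ v,\phi\in H^{2}\ip\cap H^{1}_{0}\ip\}$. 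No boundary condition is imposed on $\sigma$, consistent with the absence of a density boundary condition in \eqref{eq:ini+bdy}. This domain is dense in $Z$ and $A^{*}$ is closed.

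The decisive step is to test $A^{*}(\sigma,v,\phi)$ against $(\sigma,v,\phi)$ in the weighted inner product. After integration by parts, the two density--velocity cross terms combine into $R\bt\,\br\int_{0}^{L}(\sigma v)_{x}\,dx$ and the two velocity--temperature cross terms into $\br^{2}R\int_{0}^{L}(v\phi)_{x}\,dx$; both vanish since $v,\phi\in H^{1}_{0}\ip$, leaving only
\[
\Big\langle A^{*}(\sigma,v,\phi),(\sigma,v,\phi)\Big\rangle_{Z}
=-\br^{2}\nu_{0}\int_{0}^{L}v_{x}^{2}\,dx-\frac{\br^{2}c_{v}}{\bt}k_{0}\int_{0}^{L}\phi_{x}^{2}\,dx\le 0 .
\]
Thus $A^{*}$ is dissipative, and the weights $R\bt$, $\br^{2}$, $\frac{\br^{2}c_{v}}{\bt}$ are exactly what force the cancellation of the coupling terms. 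The same computation applied to the generator $A$ of the direct system \eqref{eq:linearized} shows that $A$ is dissipative as well, and the identity $\langle Au,p\rangle_{Z}=\langle u,A^{*}p\rangle_{Z}$ identifies $A^{*}$ as the genuine Hilbert-space adjoint of $A$. By the standard corollary of the Lumer--Phillips theorem (a densely defined, closed operator $A$ such that both $A$ and $A^{*}$ are dissipative generates a $C_{0}$-semigroup of contractions), $A^{*}$ generates a contraction semigroup $(e^{sA^{*}})_{s\ge 0}$ on $Z$. Undoing the time reversal produces the unique solution $(\sigma,v,\phi)(t)=e^{(T-t)A^{*}}(\sigma_{T},v_{T},\phi_{T})\in C([0,T];Z)$.

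The interior regularity $v,\phi\in L^{2}(0,T;H^{1}_{0}\ip)$ then follows from the energy identity obtained by integrating the dissipation relation in time:
\[
\|(\sigma,v,\phi)(0)\|_{Z}^{2}+2\int_{0}^{T}\!\Big(\br^{2}\nu_{0}\|v_{x}\|_{L^{2}}^{2}+\tfrac{\br^{2}c_{v}}{\bt}k_{0}\|\phi_{x}\|_{L^{2}}^{2}\Big)\,dt=\|(\sigma_{T},v_{T},\phi_{T})\|_{Z}^{2},
\]
which bounds $v,\phi$ in $L^{2}(0,T;H^{1}_{0}\ip)$; combined with $(\sigma,v,\phi)\in C([0,T];Z)$ this yields in particular $\sigma\in L^{2}(0,T;L^{2}\ip)$.

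The main obstacle I anticipate is that the $\sigma$-equation carries no diffusion, so the system is only partially parabolic and one cannot simply appeal to parabolic smoothing to obtain regularity or to verify a range condition for $\lambda I-A^{*}$. The dissipativity route via the carefully weighted inner product bypasses this entirely, but one must be careful when identifying the formal differential operator $A^{*}$ with the true functional-analytic adjoint of $A$: this requires checking that all boundary terms produced by integration by parts vanish on the stated domains and that no hidden boundary condition on $\sigma$ is needed.
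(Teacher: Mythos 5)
Your overall strategy (reverse time, exploit dissipativity in the weighted inner product, invoke the corollary of Lumer--Phillips for densely defined closed operators whose adjoints are also dissipative) is sound and is a reasonable fleshing-out of the paper, which in fact gives no proof at all for this proposition beyond the assertion that well-posedness follows from semigroup theory. Your integration-by-parts computation is correct, and you are right that the cross terms reduce to $[\sigma v]_0^L$ and $[v\phi]_0^L$, so no boundary condition on $\sigma$ is needed.

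However, two load-bearing assertions in your functional-analytic setup are false, and they are precisely the hypotheses of the corollary you invoke. First, the operator $A^{*}$ with the ``split'' domain $H^{1}\ip\times\bigl(H^{2}\ip\cap H^{1}_{0}\ip\bigr)^{2}$ is \emph{not} closed: since the $\sigma$-equation has no smoothing, graph-norm limits only control the combination $\tfrac{R\bt}{\br}\sigma+\nu_{0}v_{x}$, not $\sigma$ and $v_{x}$ separately. Concretely, take $v_{n}\to v$ in $H^{1}_{0}\ip$ with $v_{n}\in H^{2}\ip\cap H^1_0\ip$ but $v\notin H^{2}\ip$, set $\phi_{n}=0$ and $\sigma_{n}:=-\tfrac{\nu_{0}\br}{R\bt}\,v_{n,x}\in H^{1}\ip$; then $(\sigma_{n},v_{n},0)\in D(A^{*})$, both $(\sigma_{n},v_{n},0)$ and $A^{*}(\sigma_{n},v_{n},0)$ converge in $Z$ (the second component of $A^{*}(\sigma_{n},v_{n},0)$ is identically zero), yet the limit has $\sigma\notin H^{1}\ip$ and $v\notin H^{2}\ip$. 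Second, and for the same reason, the duality identity $\langle Au,p\rangle_{Z}=\langle u,A^{*}p\rangle_{Z}$ only shows that your $A^{*}$ is a \emph{restriction} of the true Hilbert-space adjoint; the genuine adjoint has the strictly larger domain
\begin{equation*}
\Bigl\{(\sigma,v,\phi)\in L^{2}\ip\times H^{1}_{0}\ip\times\bigl(H^{2}\ip\cap H^{1}_{0}\ip\bigr)\ :\ \tfrac{R\bt}{\br}\sigma+\nu_{0}v_{x}\in H^{1}\ip\Bigr\},
\end{equation*}
and dissipativity of a restriction does not give dissipativity of the full adjoint. So Pazy's corollary does not apply as written (it needs a closed operator and dissipativity of the true adjoint). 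The repair is routine and should be recorded: define both $D(A)$ and $D(A^{*})$ through the combination as above (these operators are then closed and are genuinely adjoint to each other), and rerun your computation there --- only the quantity $\bigl(\tfrac{R\bt}{\br}\sigma+\nu_{0}v_{x}\bigr)_{x}$ ever appears, and after integrating it against $v\in H^{1}_{0}\ip$ the cross terms cancel exactly as before, giving the same nonpositive right-hand side. With the domains fixed this way, the rest of your argument, including the final energy identity yielding $v,\phi\in L^{2}(0,T;H^{1}_{0}\ip)$, goes through.
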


Let us first assume that $(\rho_0, u_0, \theta_0),$  $(\sigma_T,v_T, \phi_T) \in {\mathcal C}_c^\infty(0,L) \times 
{\mathcal C}_c^\infty(0,L) 
\times {\mathcal C}_c^\infty(0,L),$ \\ 
$f \in {\mathcal C}_c^\infty((0,T) \times {\mathcal{O}_1}),$ $g \in {\mathcal C}_c^\infty((0,T) \times {\mathcal{O}_2}),$
$h \in {\mathcal C}_c^\infty((0,T) \times {\mathcal{O}_3})$ and 
 let $(\rho,u,\theta)$ and $(\sigma, v, \phi)$ be
the solutions of \eqref{eq:linearized} and \eqref{eq:adjoint1} respectively. Taking inner product in $Z$ of 
\eqref{eq:linearized} with $(\sigma, v, \phi)$ and integrating we obtain 
\begin{align*}
 R \bt \int_0^T \int_0^L (\partial_t \rho + \bar \rho u_x) \sigma \; dx dt+ 
 \bar \rho^2  \int_0^T \int_0^L (\partial_t u  - \nu_0 u_{xx} + \frac{R\bt}{\br} \rho_x + R \theta_x) v \; dx dt  + \\
 \frac{\br^2 c_v}{\bt} \int_0^T \int_0^L (\partial_t \theta - k_0 \theta_{xx} + \frac{R \bt}{c_v} u_x)\phi \; dx dt
 = R \bt \int_0^T \int_{\mathcal{O}_1} f \sigma \; dx dt \  
 \\
 + \br^2 \int_0^T \int_{\mathcal{O}_2}  g v  \ dx dt  + \frac{\br^2 c_v}{\bt} \int_0^T \int_{\mathcal{O}_2}  h \phi \ dx dt. 
\end{align*}
An integration by parts and use of \eqref{eq:adjoint1} gives 
\begin{align} \label{eq:identity_0}
 R \bt  \int_0^L [\rho(x, T) \sigma_T(x) - \rho_0(x) \sigma(x,0) ] dx + 
  \br^2  \int_0^L [u(x, T) v_T(x) - u_0(x) v(x,0)]  dx    \notag 
  \\ + \frac{\br^2 c_v}{\bt} \int_0^L [\theta(x, T) \phi_T(x) - \theta_0(x) \phi(x,0)]  dx 
  = R \bt \int_0^T \int_{\mathcal{O}_1} f \sigma \; dx dt \  \notag
 \\
 + \br^2 \int_0^T \int_{\mathcal{O}_2} g v \ dx dt  + \frac{\br^2 c_v}{\bt} \int_0^T \int_{\mathcal{O}_2}  h \phi  \ dx dt. 
\end{align}

The above  relation leads us to the identity equivalent to null controllability.
\begin{prop}
 For each initial state $(\rho_0, u_0,\theta_0) \in Z$, 
 the solution of the system \eqref{eq:linearized} - \eqref{eq:ini+bdy} can be driven to rest by  interior controls
 $f \in L^2(0,T;L^2({\mathcal O_1}))$, $g \in L^2(0,T;L^{2}({\mathcal O_2}))$ and $h \in L^2(0,T;L^{2}({\mathcal O_3}))$ in time $T$
 if and only if 
 \begin{align}    \label{eq:identity_1}
 R \bt \int_0^T \int_{\mathcal{O}_1} f \sigma \; dx dt \  
 + \br^2 \int_0^T \int_{\mathcal{O}_2} g  v  \ dx dt  + \frac{\br^2 c_v}{\bt} \int_0^T \int_{\mathcal{O}_3} h \phi  \ dx dt \notag
 \\ 
 + \left \langle  \begin{pmatrix}\rho_0 \\ u_0 \\ \theta_0  \end{pmatrix},\begin{pmatrix}
 \sigma(\cdot,0) \\ v(\cdot,0) \\ \phi(\cdot,0) \end{pmatrix} \right \rangle_Z
 = 0 
 \end{align}
 for all $(\sigma_T,v_T,\phi_T) \in Z,$
 where $(\sigma,v,\phi)$ is the solution of the adjoint system \eqref{eq:adjoint1}. 
 \end{prop}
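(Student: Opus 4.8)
The plan is to establish the stated equivalence by combining the duality identity \eqref{eq:identity_0} with the standard functional-analytic characterization of null controllability, so the proof is essentially a matter of rewriting \eqref{eq:identity_0} in a clean form and invoking a density argument. First I would recall that null controllability in time $T$ means: for every initial state $(\rho_0,u_0,\theta_0) \in Z$ there exist admissible controls $f,g,h$ such that $(\rho,u,\theta)(\cdot,T) = 0$. Plugging the terminal condition $\rho(\cdot,T) = u(\cdot,T) = \theta(\cdot,T) = 0$ into the identity \eqref{eq:identity_0} makes all three terminal boundary terms vanish, and the remaining initial terms reassemble exactly into the $Z$-inner product $\langle (\rho_0,u_0,\theta_0),(\sigma(\cdot,0),v(\cdot,0),\phi(\cdot,0))\rangle_Z$; this turns \eqref{eq:identity_0} into precisely \eqref{eq:identity_1}. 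Thus, the existence of controls steering $(\rho_0,u_0,\theta_0)$ to rest forces \eqref{eq:identity_1} to hold for all terminal data $(\sigma_T,v_T,\phi_T)$.

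For the converse direction and to make the argument rigorous, I would set it up as a solvability statement for the control-to-state map. The natural framework is to define, for fixed initial data, the bounded linear observation operator sending $(\sigma_T,v_T,\phi_T) \in Z$ to the triple of restrictions $(\sigma|_{\mathcal{O}_1}, v|_{\mathcal{O}_2}, \phi|_{\mathcal{O}_3})$ of the adjoint solution together with the initial trace, all of which are controlled by $\|(\sigma_T,v_T,\phi_T)\|_Z$ via the well-posedness proposition for \eqref{eq:adjoint1}. The identity \eqref{eq:identity_1} then says that the element $(\rho_0,u_0,\theta_0)$, transported to time $0$, lies in the range of the adjoint of the control operator, which by the closed-range/duality theory of linear control systems (see \cite{CORON}, Chapter 2) is exactly the condition that the state be reachable to rest. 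Concretely, \eqref{eq:identity_1} expresses that a certain linear functional of the terminal data, namely the left-hand side viewed as depending on $(\sigma_T,v_T,\phi_T)$, vanishes identically; when controls exist this functional is the duality pairing realizing the adjoint relation, and conversely its vanishing produces the required controls by the standard minimization of a quadratic functional.

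The one genuine technical point — and the step I expect to require the most care — is the density/regularity bookkeeping. The identity \eqref{eq:identity_0} was derived under the smoothness assumptions $(\rho_0,u_0,\theta_0), (\sigma_T,v_T,\phi_T) \in \mathcal{C}_c^\infty$ and smooth controls, whereas the proposition is stated for general $L^2$ data and $L^2$-in-time controls. I would therefore extend \eqref{eq:identity_0}, and hence \eqref{eq:identity_1}, from the smooth case to the full spaces by a continuity argument: both sides of \eqref{eq:identity_0} are continuous in the relevant norms because of the two well-posedness propositions (which give $C([0,T];Z)$ solutions depending continuously on data and controls), and $\mathcal{C}_c^\infty(0,L)$ is dense in $L^2(0,L)$ while the smooth controls are dense in the respective $L^2(0,T;L^2(\mathcal{O}_i))$ spaces. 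Passing to the limit preserves the identity, so it holds for all admissible data. Once this extension is in place, the equivalence reads off directly and the proof concludes by quoting the abstract duality result.
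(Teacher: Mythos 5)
Your density step and your forward direction are exactly right and coincide with the paper's own argument: extend \eqref{eq:identity_0} from smooth data and controls to all of $Z$ and $L^2$ by continuity, then plug in $(\rho,u,\theta)(\cdot,T)=0$ to get \eqref{eq:identity_1}. The genuine gap is in your converse. Note that the controls $f,g,h$ are already present in \eqref{eq:identity_1}; the claim to be proved is that if \eqref{eq:identity_1} holds for all $(\sigma_T,v_T,\phi_T)\in Z$ with these \emph{given} controls, then these \emph{same} controls drive the state to rest. Nothing needs to be constructed: subtracting \eqref{eq:identity_1} from the extended identity \eqref{eq:identity_0} leaves
\begin{equation*}
\left\langle \begin{pmatrix}\rho(\cdot,T)\\ u(\cdot,T)\\ \theta(\cdot,T)\end{pmatrix},
\begin{pmatrix}\sigma_T\\ v_T\\ \phi_T\end{pmatrix}\right\rangle_Z = 0
\quad \text{for all } (\sigma_T,v_T,\phi_T)\in Z,
\end{equation*}
and since $\langle\cdot,\cdot\rangle_Z$ is an inner product on $Z=(L^2(0,L))^3$ (the weights $R\bt$, $\br^2$, $\br^2 c_v/\bt$ are positive constants), this forces $(\rho,u,\theta)(\cdot,T)=0$. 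That two-line orthogonality argument is the whole converse.

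Instead, you route the converse through ``closed-range/duality theory'' and propose that the vanishing of the functional ``produces the required controls by the standard minimization of a quadratic functional.'' This is both off-target and unjustifiable. Off-target, because the proposition is a characterization with fixed controls, not an existence statement to be settled by constructing controls. Unjustifiable, because the HUM minimization you invoke requires coercivity of the quadratic functional, which is precisely the observability inequality \eqref{eq:observability_ineq_1}; that inequality is not a hypothesis here, and in fact the main results of this paper (Theorem \ref{thm1} in particular) show it \emph{fails} for localized observation sets, so any argument that manufactures controls from abstract duality would be proving something false in general. Likewise, your claim that \eqref{eq:identity_1} says the initial state ``lies in the range of the adjoint of the control operator'' conflates the vanishing of one specific functional (with $f,g,h$ fixed) with a range condition quantified over all controls; the latter is not what the identity expresses. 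Delete the abstract machinery and replace it with the orthogonality argument above, and your proof matches the paper's.
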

\begin{proof}
 By a density argument we deduce that for any $(\rho_0,u_0,\theta_0) \in Z$ and $(\sigma_T, v_T, \phi_T) \in Z $ the identity 
 \eqref{eq:identity_0} holds. Thus from \eqref{eq:identity_0}, it follows that  \eqref{eq:identity_1} holds if and only if 
 \eqref{eq:linearized} - \eqref{eq:ini+bdy} is null controllable and $f,g,h$ are the corresponding controls. 
 \end{proof}

One can use the identity \eqref{eq:identity_1} to get an observability inequality which is also equivalent to  null controllability.
More precisely we have the following Proposition (See \cite{SZ} Section 2 and \cite{CORON} Chapter 2 ).  
\begin{prop}
 The system \eqref{eq:linearized} - \eqref{eq:ini+bdy} is null controllable in $Z$ in time $T > 0$ if and only if there exists a constant $C$
 such that for any terminal condition $(\sigma_T, v_T,\phi_T) \in Z$, $(\sigma, v, \phi)$, the solution of the adjoint 
 problem \eqref{eq:adjoint1}
 satisfies the following observability inequality
\begin{align} \label{eq:observability_ineq_1}
   &\|\sigma(\cdot,0)\|^2_{L^2(0,L)} + \|v(\cdot,0)\|^2_{L^2(0,L)}  + \|\phi(\cdot,0)\|^2_{L^2(0,L)}  
  \notag \\
  &\leq C \left ( \int_0^T \int_{{\mathcal O}_1} \sigma^2 dx dt + \int_0^T \int_{{\mathcal O}_2} v^2 dx dt
  + \int_0^T \int_{{\mathcal O}_3} \phi^2 dx dt \right ).
\end{align}
\end{prop}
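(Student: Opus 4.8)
The plan is to establish the stated equivalence by the classical duality (Hilbert Uniqueness Method) argument, taking as its starting point the identity \eqref{eq:identity_1} of the previous proposition, which already records the precise relation between admissible controls $f,g,h$ and the adjoint solution $(\sigma,v,\phi)$. I would prove the two implications separately, noting throughout that, since the weights $R\bt$, $\br^2$ and $\br^2 c_v/\bt$ appearing in $\langle\cdot,\cdot\rangle_Z$ are positive constants, the quantity $\|(\sigma(\cdot,0),v(\cdot,0),\phi(\cdot,0))\|_Z^2$ is equivalent to the unweighted sum of squared $L^2(0,L)$-norms on the left-hand side of \eqref{eq:observability_ineq_1}.

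For the implication ``observability inequality $\Rightarrow$ null controllability,'' I would introduce the quadratic functional $J:Z\to\mathbf{R}$ acting on terminal adjoint data by
\[
\begin{aligned}
J(\sigma_T,v_T,\phi_T) = {} & \tfrac{1}{2}\left(R\bt\int_0^T\!\!\int_{\mathcal{O}_1}\sigma^2\,dx\,dt + \br^2\int_0^T\!\!\int_{\mathcal{O}_2}v^2\,dx\,dt + \tfrac{\br^2 c_v}{\bt}\int_0^T\!\!\int_{\mathcal{O}_3}\phi^2\,dx\,dt\right) \\
& + \left\langle \begin{pmatrix}\rho_0\\ u_0\\ \theta_0\end{pmatrix}, \begin{pmatrix}\sigma(\cdot,0)\\ v(\cdot,0)\\ \phi(\cdot,0)\end{pmatrix}\right\rangle_Z ,
\end{aligned}
\]
where $(\sigma,v,\phi)$ solves \eqref{eq:adjoint1} with terminal data $(\sigma_T,v_T,\phi_T)$. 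This $J$ is continuous and strictly convex, and by \eqref{eq:observability_ineq_1} its quadratic part dominates a constant multiple of $\|(\sigma(\cdot,0),v(\cdot,0),\phi(\cdot,0))\|_Z^2$; absorbing the linear term by Cauchy--Schwarz and Young's inequality yields coercivity. Hence $J$ admits a unique minimizer, and writing the Euler--Lagrange condition $DJ=0$ there, one reads off that the choice $f=\sigma|_{\mathcal{O}_1}$, $g=v|_{\mathcal{O}_2}$, $h=\phi|_{\mathcal{O}_3}$ (the adjoint solution generated by the minimizer) makes \eqref{eq:identity_1} hold for every $(\sigma_T,v_T,\phi_T)\in Z$, which by the preceding proposition is precisely null controllability.

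For the reverse implication ``null controllability $\Rightarrow$ observability inequality,'' I would argue by the closed graph theorem: null controllability furnishes, for each $(\rho_0,u_0,\theta_0)\in Z$, controls steering the state to rest, and a standard closed graph argument lets one select them with $\|f\|+\|g\|+\|h\|\le C\|(\rho_0,u_0,\theta_0)\|_Z$ in the respective $L^2(0,T;L^2(\mathcal{O}_i))$ norms. Then, given arbitrary terminal adjoint data, I would apply \eqref{eq:identity_1} with the \emph{particular} initial state $(\rho_0,u_0,\theta_0)=(\sigma(\cdot,0),v(\cdot,0),\phi(\cdot,0))$: the inner-product term becomes $\|(\sigma(\cdot,0),v(\cdot,0),\phi(\cdot,0))\|_Z^2$, while the three control integrals are bounded, via Cauchy--Schwarz and the uniform control estimate, by $C\|(\sigma(\cdot,0),v(\cdot,0),\phi(\cdot,0))\|_Z$ times the square root of the right-hand side of \eqref{eq:observability_ineq_1}. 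Dividing through and squaring gives \eqref{eq:observability_ineq_1}.

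The step I expect to be the main obstacle is the coercivity and existence of the minimizer of $J$ in the first implication: the observation seminorm need not be equivalent to the $Z$-norm on terminal data, so $J$ is genuinely coercive only on the Hilbert completion of $Z$ with respect to that seminorm, and one must check that the minimizer there still generates an admissible adjoint trajectory and hence $L^2$ controls. This is the classical technical point of the duality method, which I would treat exactly as in \cite{CORON}, Chapter 2, and \cite{SZ}, Section 2.
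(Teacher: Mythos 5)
Your proposal is correct and coincides with the argument the paper itself relies on: the paper offers no proof of this proposition, deferring instead to \cite{SZ} (Section 2) and \cite{CORON} (Chapter 2), and your duality/HUM argument --- minimizing the functional $J$ built from the identity \eqref{eq:identity_1} for one implication, and the open-mapping/closed-graph selection of bounded controls combined with \eqref{eq:identity_1} and Cauchy--Schwarz for the other --- is precisely the standard proof contained in those references. You also correctly isolate the one real technical point (coercivity of $J$ holds only in the observation seminorm, so the minimization must be carried out in the completion of $Z$ under that seminorm), which is treated exactly as you indicate in the cited sources.
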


\subsection{Highly Localized Solutions}

We now want to prove that the system \eqref{eq:linearized} - \eqref{eq:ini+bdy} is not null controllable in $Z$ when 
controls are localized. Our idea is to show that the observability inequality 
\eqref{eq:observability_ineq_1} does not hold in this case. 
For this we first consider the adjoint problem in whole real line :
\begin{equation} \label{eq:adjoint2}
\begin{array}{lll}
-\sigma_t  - \br \;v_x \;\; = 0, \mbox{ in } \mathbb{R} \times (0,T), \\ [2.mm]
\displaystyle
 - v_t - \nu_0 v_{xx}  - \frac{R \bt}{\br}\;\sigma_x  - R \phi_x  \; = 0, \mbox{ in } \mathbb{R} \times (0,T), 
   \\[3.mm]
\displaystyle
 - \phi_t (x,t) - k_0 \phi_{xx} - \frac{R \bt}{c_v} v_x \; = 0, \mbox{ in } \mathbb{R} \times (0,T),
\\ [2.mm] \displaystyle
  \sigma(T) = \sigma_T,\quad v(T) = v_T ,\quad \phi(T) = \phi_T \quad \mbox{ in } \mathbb{R}.
\end{array}
\end{equation}

First we will construct a particular solution of the above adjoint problem which is localized in a small neighbourhood of 
any $x_0 \in \mathbb{R}.$
For this we would like to have a Fourier representation formula for the solution of \eqref{eq:adjoint2}. Let us assume that 
$(\sigma_T,v_T,\phi_T) \in (L^2(\mathbb{R}))^3$ and $(\sigma, v, \phi) \in (L^2(0,T;L^2(\mathbb{R})))^3.$
We define $\hat \sigma (\xi,t)$ the Fourier transform of $\sigma$ in the space variable $x$ for each $t \in (0,T)$ as follows,
\[
 \hat \sigma (\xi,t) = \int_{\mathbb R} \sigma(x,t) e^{-i \xi x} dx , \quad \xi \in {\mathbb R}. 
\]
We also define $\hat v (\xi,t)$ and $\hat \phi(\xi,t)$ in a similar manner. Applying the Fourier transform in \eqref{eq:adjoint2}, we obtain
the following system of ODE satisfied by 
$\sh(\xi,t)$, $\vh(\xi,t)$ and  $\ph(\xi,t)$ 

\begin{equation} \label{eq:adjoint-Fr}
 \begin{array}{lll}
  - \partial_t \sh (\xi,t) - \br \ (i\xi) \ \vh(\xi,t)\;\; = 0,   \qquad \xi \in {\mathbb R} , t \in (0,T),\\ [2.mm]
\displaystyle
 - \partial_t \vh(\xi,t)- \nu_0  (- \xi^2) \vh(\xi,t) - \frac{R \bt}{\br}\;(i\xi)\sh(\xi,t) - R (i\xi)\ph(\xi,t) \; = 0, 
 \qquad \xi \in {\mathbb R} , t \in (0,T),  \\[3.mm]
\displaystyle
 - \partial_t \ph (\xi,t) - k_0 (- \xi^2)\ph (\xi,t) - \frac{R \bt}{c_v}(i\xi) \vh(\xi,t) \; = 0, \qquad \xi \in {\mathbb R} , t \in (0,T),
\\ [2.mm] \displaystyle
  \sh(\xi,T) = \sh_T(\xi),\quad \vh(\xi,T) = \vh_T(\xi) ,\quad \ph(\xi,T) = \ph_T(\xi) \quad \xi \in {\mathbb R}.
 \end{array}
\end{equation}
Let us define 
\begin{equation} \label{eq:matrix}
\displaystyle
 A(\xi) = \begin{pmatrix} 
                        \displaystyle  0                       & \br i \xi               & 0 \\
                        \displaystyle  \frac{R\bt}{\br} i \xi  & - \nu_0 \xi^2                 & Ri\xi \\
                        \displaystyle   0                      &\displaystyle \frac{R \bt}{c_v} i \xi & -k_0\xi^2
                        \end{pmatrix}
\end{equation}
Then \eqref{eq:adjoint-Fr} can be written in the following form 
\begin{align}
&- \begin{pmatrix} \sh \\ \vh \\ \ph \end{pmatrix}_t = A(\xi) \begin{pmatrix} \sh \\ \vh \\ \ph \end{pmatrix} \notag \\
&(\sh,\vh,\ph)(\xi,T) = (\sh_T(\xi), \vh_T(\xi), \ph_T(\xi)) .
\end{align}
The unique solution of the above system of ODE can be written as 
\begin{equation} \label{eq:ODEsoln}
 (\sh, \vh, \ph ) (\xi,t) = e^{A(\xi)(T-t)}(\sh_T, \vh_T, \ph_T).
\end{equation}

We will now discuss some properties of the eigenvalues of $A(\xi).$ Let $a:\mathbb{R} \rightarrow \mathbb{R}$, $b:\mathbb{R} \rightarrow \mathbb{R}$ and $c:\mathbb{R} \rightarrow \mathbb{R}$ be three 
smooth functions. Let $\lambda^3 + a(\xi) \lambda^2 + b(\xi) \lambda + c(\xi) $ be a cubic polynomial. 
Let us define the  discriminant of the above cubic polynomial
 $$ D(\xi) = 18a(\xi)b(\xi)c(\xi) - 4a^3(\xi)c(\xi) + a^2(\xi)b^2(\xi) - 4b^3(\xi) - 27 c^2(\xi).$$
 Now the roots of the above
 cubic polynomial are given by the following formula
 \begin{align} \label{eq:formula}
 \lambda_k(\xi) =\displaystyle -\frac{1}{3}\left(a(\xi) + \omega_k C(\xi) + \frac{D_0(\xi)}{\omega_k C(\xi)}\right), \ \mbox{ for } k=1,2,3. 
 \end{align}
 where
 $$\omega_1 = 1, \quad \displaystyle \omega_2 = \frac{-1+i\sqrt{3}}{2}, \quad \omega_3 = \frac{-1 - i\sqrt{3}}{2}$$
 are the three cubic roots of unity, and 
 $$\displaystyle C(\xi) = \sqrt[3]{\frac{D_1(\xi) + \sqrt{D_1(\xi)^2 - 4 D_0(\xi)^3}}{2}}  \qquad 
$$
with
$$ D_0(\xi) = a^2(\xi) - 3 b(\xi), \quad D_1(\xi) = 2 a^3(\xi)-9 a(\xi) b(\xi) + 27 c(\xi) \ \mbox{and } D_1(\xi)^2 - 4 D_0(\xi)^3 = -27 D(\xi)$$
and when 
$$D_0(\xi) \neq 0 \ \ \mbox{and } D(\xi) \neq 0.$$
In this formula, $\sqrt{~~}$ and $\sqrt[3]{~~}$ denote any choice for the square or cube roots, but one has to 
be consistent with the choice for all $\xi$. 

If $D(\xi) \neq 0$ and $D_0(\xi) = 0$ for some $\xi,$ the sign of $\sqrt{D_1(\xi)^2 - 4D_0(\xi)^3} = \sqrt{D_1(\xi)^2}$ 
has to be chosen to have $C(\xi) \neq 0,$ i.e. one should define $\sqrt{D_1(\xi)^2} = D_1(\xi).$ In this case the roots are given by 
$$\lambda_k(\xi) = -\frac{1}{3}\left(a(\xi) + \omega_k \sqrt[3]{D_1(\xi)}\right), \mbox{ for } k=1,2,3.$$

If $D(\xi) = 0$ and $D_0(\xi) = 0$ for some $\xi,$ the three roots are  equal 
$$ \lambda_1(\xi) = \lambda_2(\xi) = \lambda_3(\xi) = -b(\xi)/3.$$

If $D(\xi) = 0$ and $D_0(\xi) \neq 0$ for some $\xi,$ there is a double root 
$$ \lambda_1(\xi) = \lambda_2(\xi)  = \frac{9c(\xi) - a(\xi) b(\xi)}{2 D_0(\xi)},$$
and a simple root 
$$\lambda_3 (\xi) = \frac{4 a(\xi) b(\xi) - 9 c(\xi) - a(\xi)^3}{D_0(\xi)}.$$

As $a,$ $b$ and $c$ are differentiable functions
of $\xi,$ it is easy to deduce that real part and the complex part of the roots are also differentiable for all $\xi \in 
\{\xi \in \mathbb{R} | C(\xi) \neq 0, D(\xi) \neq 0, D_0(\xi) \neq 0\}.$
We have the following lemma about the properties of the eigenvalues of $A(\xi).$

\begin{lem} \label{lem:2.5}
 The eigenvalues of $A(\xi)$ always have non positive real part for all $\xi \in \mathbb{R}.$
 Let $\{ -\lambda(\xi), \, -\mu(\xi), \, -\delta(\xi) \} $, be
  the eigenvalues of $A(\xi)$, where $\mathrm{Re} \ \lambda(\xi) \geq 0,\mathrm{Re} \ \mu(\xi) \geq 0$ and $\mathrm{Re} \ \delta(\xi) \geq 0$.
 There exists a constant
 $\xi_0 = \xi_0(R,\bt,\nu_0,k_0,b) >0,$ such that
 for $|\xi| \geq \xi_0,$ one of the eigenvalues, say $\delta(\xi)$  satisfies
\begin{equation}   \label{eq:lim}
\lim_{|\xi| \to \infty}   \delta(\xi)  = \omega_0 ,
\end{equation}
where $\omega_0 = \frac{R\bt}{\nu_0} .$ 
\end{lem}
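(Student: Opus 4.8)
The plan is to read everything off the characteristic polynomial of $A(\xi)$: the sign of the real parts via the Routh--Hurwitz criterion, and the limit via a rescaling argument. First I would compute $\det(\lambda I - A(\xi))$ by expanding along the first column and using $(i\xi)^2 = -\xi^2$. A direct calculation gives the monic cubic $\lambda^3 + a(\xi)\lambda^2 + b(\xi)\lambda + c(\xi)$ with
\begin{equation*}
a(\xi) = (\nu_0 + k_0)\xi^2, \qquad b(\xi) = \nu_0 k_0\,\xi^4 + R\bt\Big(\frac{R}{c_v} + 1\Big)\xi^2, \qquad c(\xi) = R\bt k_0\,\xi^4,
\end{equation*}
so that all three coefficients are real and strictly positive for $\xi \neq 0$, while the polynomial reduces to $\lambda^3$ when $\xi = 0$.

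For the sign of the real parts I would invoke the Routh--Hurwitz criterion for cubics: the roots all have strictly negative real part if and only if $a(\xi), b(\xi), c(\xi) > 0$ and $a(\xi) b(\xi) - c(\xi) > 0$. The first condition is already recorded, and for the second, multiplying out gives
\begin{equation*}
a(\xi) b(\xi) - c(\xi) = (\nu_0 + k_0)\nu_0 k_0\,\xi^6 + \Big[(\nu_0 + k_0)\frac{R^2\bt}{c_v} + R\bt\,\nu_0\Big]\xi^4,
\end{equation*}
where the contribution $-R\bt k_0\,\xi^4$ from $c(\xi)$ is exactly cancelled by the $R\bt k_0\,\xi^4$ produced inside $a(\xi) b(\xi)$. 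Both brackets are positive, so $a(\xi) b(\xi) - c(\xi) > 0$ for every $\xi \neq 0$; hence all three eigenvalues have strictly negative real part when $\xi \neq 0$, and they all vanish at $\xi = 0$. This gives the first assertion and justifies the sign convention $\mathrm{Re}\,\lambda, \mathrm{Re}\,\mu, \mathrm{Re}\,\delta \geq 0$.

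For the asymptotics I would separate the two ``parabolic'' eigenvalues from the bounded one. Rescaling $\lambda = \xi^2\eta$ and dividing by $\xi^6$ yields $\eta^3 + (\nu_0 + k_0)\eta^2 + \nu_0 k_0\,\eta + O(\xi^{-2}) = 0$, whose limiting equation factors as $\eta(\eta + \nu_0)(\eta + k_0) = 0$: the branches $\eta \to -\nu_0$ and $\eta \to -k_0$ give the two eigenvalues growing like $-\nu_0\xi^2$ and $-k_0\xi^2$, while $\eta \to 0$ signals a root that stays bounded. To pin down the limit of that bounded root I would instead divide the cubic by $\xi^4$ and let $|\xi| \to \infty$ with $\lambda$ bounded; the $\lambda^3/\xi^4$ and $(\nu_0 + k_0)\lambda^2/\xi^2$ terms drop out, leaving $\nu_0 k_0\,\ell + R\bt k_0 = 0$, i.e. $\ell = -R\bt/\nu_0$. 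Writing the bounded eigenvalue as $-\delta(\xi)$ then gives $\delta(\xi) \to R\bt/\nu_0 = \omega_0$. As a consistency check, by Vieta's formula the product of the three roots equals $-c(\xi) = -R\bt k_0\,\xi^4$, and dividing by the product $\sim \nu_0 k_0\,\xi^4$ of the two escaping roots recovers $-R\bt/\nu_0$.

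The step I expect to be the genuine obstacle is legitimizing the labeling of the bounded branch, i.e. producing the threshold $\xi_0$. One must ensure that for $|\xi| \geq \xi_0$ the bounded root can be singled out as a well-defined continuous function $-\delta(\xi)$; I would argue this from the fact that the bounded root is uniformly separated from the two escaping roots (which diverge like $-\nu_0\xi^2$ and $-k_0\xi^2$), so that for $|\xi|$ large it is the unique root in a fixed bounded region and depends continuously on $\xi$. In the generic case $\nu_0 \neq k_0$ this also follows from the discriminant $D(\xi)$ being nonzero for large $|\xi|$, the three limiting rescaled roots $0, -\nu_0, -k_0$ being distinct. The Routh--Hurwitz computation and the formal asymptotics are routine; the care is entirely in justifying the separation of the branch and hence the existence of $\xi_0$, after which passing to the limit is immediate.
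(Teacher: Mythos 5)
Your proposal is correct, and its skeleton matches the paper's: both work from the characteristic polynomial $\lambda^3+a(\xi)\lambda^2+b(\xi)\lambda+c(\xi)$ with the same coefficients, both get the sign statement from the Routh--Hurwitz test ($a,b,c>0$ and $ab>c$, which the paper verifies exactly as you do), and both extract the limit from Vieta's relations. The genuine difference is how the bounded branch is isolated for $|\xi|$ large. The paper computes the discriminant $D(\xi)$ and splits into two cases: for $\nu_0\neq k_0$ it shows $D(\xi)>0$ for large $|\xi|$, so the three roots are real and distinct, orders the rescaled roots $\lambda/\xi^2>\mu/\xi^2>\delta/\xi^2$, shows they tend to $\nu_0,k_0,0$, and then uses the Vieta identity $1/\lambda+1/\mu+1/\delta=1/(k_0\xi^2)+Rb/(k_0\bar\theta\xi^2)+\nu_0/(R\bar\theta)$ to get $\delta\to R\bar\theta/\nu_0$; for $\nu_0=k_0$ it shows $D(\xi)<0$ for large $|\xi|$, so there is a complex-conjugate pair, and it runs a separate, longer limiting argument on the real and imaginary parts. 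Your route replaces this case analysis by continuity of roots for the rescaled polynomial $\eta(\eta+\nu_0)(\eta+k_0)+O(\xi^{-2})$ combined with the Vieta product formula: the two escaping roots have product $\sim\nu_0 k_0\xi^4$ whether they are real or a conjugate pair, so the remaining root equals $-c(\xi)$ divided by that product and tends to $-R\bar\theta/\nu_0$ in both cases at once. That is shorter and uniform in the parameters; what the paper's case analysis buys in exchange is the explicit information that $\delta(\xi)$ is real for $|\xi|\geq\xi_0$, which it silently reuses in Lemma \ref{lem:2.6} (``$\delta(\xi)$ is real and hence differentiable'').

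One step should be reordered to be airtight: your primary argument ``divide by $\xi^4$ and let $|\xi|\to\infty$ with $\lambda$ bounded'' is circular as stated, since the rescaling only shows the third root is $o(\xi^2)$, not that it is bounded. The computation you labelled a consistency check --- product of all three roots $=-c(\xi)=-R\bar\theta k_0\xi^4$ divided by the product $\sim\nu_0 k_0\xi^4$ of the escaping pair --- is in fact the load-bearing step: it simultaneously proves boundedness of the third root and identifies its limit, so it should be promoted to the main argument rather than a check. (If you also want realness of that root, note it cannot belong to a conjugate pair, since its conjugate would then be a second bounded root.)
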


\begin{proof}
 The eigenvalues of $A(\xi)$ are given by the roots of the characteristic polynomial
 \begin{equation} \label{eq:characteristic}
\lambda^3 + (\nu_0 + k_0)\xi^2 \lambda^2 + (R\bt \xi^2 + R^2 b \xi^2 + k_0 \nu_0 \xi^4)\lambda + k_0R\bt \xi^4 = 0.  
 \end{equation}
Since it is a polynomial of degree three, it will always have a real root.

Now the polynomial $\lambda^3 + a\lambda^2 + b \lambda + c$ is stable i.e. all the roots have negative real part if and only if 
$$a > 0, \ b > 0, \ c > 0  \ \mbox{ and } \ a b > c.$$ (Theorem 2.4, Part I of \cite{ZA}).
For our case, 
$$a(\xi) = (\nu_0 + k_0)\xi^2, \quad b(\xi) = R\bt \xi^2 + R^2 b \xi^2 + k_0 \nu_0 \xi^4, \quad c(\xi)  = k_0 R \bt \xi^4.$$ 
And it is easy to verify that
$a(\xi) \ b(\xi) > c(\xi)$ for all $\xi \in \mathbb{R} \setminus \{0\}$. So all the eigenvalues have negative real part for all 
$\xi \in \mathbb{R} \setminus \{0\}$. For $\xi = 0,$ the characteristic polynomial has only one root $0.$

Let $\{ -\lambda(\xi), \, -\mu(\xi), \, -\delta(\xi) \} $, be 
the eigenvalues of $A(\xi),$ given by the formula \eqref{eq:formula}. Thus 
$$\mathrm{Re} \ \lambda(\xi) \geq 0,\quad \mathrm{Re} \ \mu(\xi) \geq 0,
\quad \mathrm{Re} \ \delta(\xi) > 0 \mbox{ for all } \xi \in \mathbb{R}\setminus \{0\}.$$

Comparing the coefficients of the characteristic polynomial we obtain,
\begin{align} \label{eq_3.12}
\begin{cases}
&\lambda(\xi) + \mu(\xi) + \delta(\xi)  = (\nu_0 + k_0)\xi^2,\\
&\lambda(\xi)\mu(\xi) + \mu(\xi)\delta(\xi) + \delta(\xi)\lambda(\xi) = R\bt \xi^2 + R^2 b \xi^2 + k_0 \nu_0 \xi^4, \\
&\lambda(\xi) \mu(\xi) \delta(\xi) = k_0R\bt \xi^4 .
\end{cases}
\end{align}

It is well known that, if the discriminant $D > 0,$ then the polynomial has three distinct real roots and if $D < 0,$ then the equation has 
one real root and two complex conjugate roots. In our case the discriminant is 
\begin{align}
D(\xi) = k_0^2 \ \nu_0^2 (\nu_0 - k_0)^2 \xi^{12} + \left [ 18(\nu_0 + k_0)k_0^2\nu_0 R \bt  - 4 (\nu_0 + k_0)^3 k_0 R\bt \right. \notag \\
\left. + 2 (\nu_0 + k_0)^2 (R\bt + R^2 b) k_0 \nu_0   - 12 (R\bt + R^2 b) k_0^2 \nu_0^2 \right] \xi^{10} + O(\xi^8).
\end{align}
We also have
$$ D_0(\xi) = (\nu_0 + k_0)\xi^4 - 3 (R\bt \xi^2 + R^2 b \xi^2 + k_0 \nu_0 \xi^4).$$
This leads us to consider the following two cases. 

{\bf Case I.} Let $\nu_0 \neq k_0.$ In this case, there exists a positive constant $\xi_0,$ such that $D(\xi) > 0$  and 
$D_0(\xi) \neq 0 $ for all $|\xi| \geq \xi_0.$ 
Hence for  $|\xi| \geq \xi_0,$  $\lambda(\xi),\mu(\xi)$ and $\delta(\xi) $ are all real, positive and distinct. 
Let us define, 
\begin{align}
 \tilde \lambda(\xi) = \frac{\lambda(\xi)}{\xi^2}, \tilde \mu (\xi) = \frac{\mu(\xi)}{\xi^2}, \tilde \delta (\xi) = \frac{\delta(\xi)}{\xi^2}.
\end{align}

In \eqref{eq_3.12}, letting $|\xi| \rightarrow \infty,$ we obtain 
\begin{align} \label{eq:2.15}
\begin{cases}
&\tilde\lambda(\xi) + \tilde\mu(\xi) + \tilde\delta(\xi)  = (\nu_0 + k_0),\\
&\displaystyle\lim_{|\xi| \to \infty} \left(\tilde\lambda(\xi)\tilde\mu(\xi) + \tilde\mu(\xi)\tilde\delta(\xi) + 
\tilde\delta(\xi)\tilde\lambda(\xi)\right) =  k_0 \nu_0, \\
&\displaystyle\lim_{|\xi| \to \infty} \tilde\lambda(\xi) \tilde\mu(\xi) \tilde\delta(\xi) = 0.
\end{cases}
\end{align}
As  $\lambda(\xi),\mu(\xi),\delta(\xi) $ are all positive, from the first equation of \eqref{eq:2.15}, we deduce that 
$\tilde\lambda(\xi),\tilde\mu(\xi),\tilde\delta(\xi) $ are all bounded. They are also continuous for $|\xi| \geq \xi_0.$
As $\lambda(\xi),\mu(\xi),\delta(\xi)$ are all distinct and continuous for $|\xi| \geq \xi_0,$ without loss of generality 
we assume that 
$$ \tilde \lambda(\xi) > \tilde \mu(\xi) > \tilde \delta(\xi)\ \mbox{ for } |\xi| \geq \xi_0.$$
From the 
last equation of \eqref{eq:2.15}, we obtain  
that  $\tilde\delta(\xi)$ converges to $0$ as $|\xi| \rightarrow \infty.$ 
From \eqref{eq:2.15}
we obtain, 
\begin{align} 
\lim_{|\xi| \to \infty}\left(\tilde\lambda(\xi) + \tilde\mu(\xi)\right)  = (\nu_0 + k_0),
\qquad \lim_{|\xi| \to \infty} \left(\tilde\lambda(\xi) \ \tilde\mu(\xi)\right) =  k_0 \nu_0.
\end{align}
Therefore, $\tilde\lambda(\xi)$ and $\tilde\mu(\xi)$ both converge as $|\xi|\rightarrow \infty$ and one of them converges 
to $\nu_0$ and the other one 
to $k_0.$ Without loss of generality we assume that 
 \[
 \lim_{|\xi| \to \infty}\frac{\lambda(\xi)}{\xi^2} = \nu_0; \quad \lim_{|\xi| \to \infty}\frac{\mu(\xi)}{\xi^2} = k_0.
\]

Therefore
\[
 \lim_{|\xi| \to \infty} \lambda(\xi) = \infty , \qquad \lim_{|\xi| \to \infty} \mu(\xi) = \infty.
\]
From the last two equations of \eqref{eq_3.12}, we have 
\begin{align} \label{eq:2.17}
 \frac{1}{\lambda(\xi)} + \frac{1}{\mu(\xi)} + \frac{1}{\delta(\xi)} = \frac{1}{k_0 \xi^2}+ \frac{Rb}{k_0\bt \xi^2} + \frac{\nu_0}{R\bt}.
\end{align}
Letting $|\xi| \rightarrow \infty,$ we deduce that $\displaystyle \lim_{|\xi| \to \infty} \delta(\xi) = \frac{R\bt}{\nu_0} := \omega_0.$

{\bf Case II.} Let $\nu_0 = k_0.$ In this case, 
\begin{align*}
 D(\xi) = -4 k_0^4 R^2 b \xi^{10} + O(\xi^8).
\end{align*}
Thus there exists a constant $\xi_0,$ such that $D(\xi) < 0$ and $D_0(\xi) \neq 0$ for all $|\xi|\geq \xi_0.$ Hence for all $|\xi|\geq \xi_0,$ we have one real root and
two complex conjugate roots. Let 
\begin{align*}
 \lambda(\xi) = \alpha(\xi) + i \beta(\xi) \mbox{ and }  \mu(\xi) = \alpha(\xi) - i \beta(\xi). 
\end{align*}
Hence, for all $|\xi| \geq \xi_0,$  $\alpha(\xi)$ and $\delta(\xi)$ are all real and positive. 
Let us define, 
\begin{align}
 \tilde \alpha(\xi) = \frac{\alpha(\xi)}{\xi^2}, \tilde \beta (\xi) = \frac{\beta(\xi)}{\xi^2}, \tilde \delta (\xi) = \frac{\delta(\xi)}{\xi^2}.
\end{align}

From \eqref{eq_3.12}  we obtain 
\begin{align} \label{eq:2.20}
\begin{cases}
&2\tilde\alpha(\xi) +  \tilde\delta(\xi)  = 2 k_0,\\
& \displaystyle \tilde\alpha^2(\xi) + \tilde\beta^2(\xi) + 2 \tilde\alpha(\xi)\tilde\delta(\xi) =  k_0^2 + \frac{R\bt + R^2 b}{\xi^2}, \\
&\displaystyle \left(\tilde\alpha^2(\xi) + \tilde\beta^2(\xi)\right) \tilde\delta(\xi) = \frac{k_0R\bt}{\xi^2}.
\end{cases}
\end{align}
As  $\alpha(\xi),\beta(\xi),\delta(\xi) $ are all positive, from the first two equations of \eqref{eq:2.20}, we deduce that 
$\tilde\alpha(\xi),\tilde\beta(\xi),$ $\tilde\delta(\xi) $ are all bounded for $|\xi|\geq \xi_0$. They are also continuous. From the 
last equation of \eqref{eq:2.20}, we obtain 
$$ \displaystyle\lim_{|\xi| \to \infty} \tilde\alpha^2(\xi) \tilde\delta(\xi) = 0 , \quad  
\displaystyle\lim_{|\xi| \to \infty} \tilde\beta^2(\xi) \tilde\delta(\xi) = 0.$$
Multiplying the second equation of \eqref{eq:2.20} by $\tilde \alpha(\xi) \tilde \delta(\xi)$ and letting $|\xi|\rightarrow \infty$ we obtain
$$\displaystyle 
\lim_{|\xi|\rightarrow \infty} \tilde \alpha(\xi) \tilde \delta(\xi)  = \lim_{|\xi|\rightarrow \infty} \frac{1}{k_0^2}
\left[\tilde\alpha(\xi) \tilde \delta(\xi)(\tilde\alpha^2(\xi) + \tilde\beta^2(\xi)) + 2 \tilde\alpha(\xi)^2\tilde\delta(\xi)^2 
 - \tilde\alpha(\xi) \tilde \delta(\xi)\frac{R\bt + R^2 b}{\xi^2}\right] = 0.$$

Again multiplying the second equation of \eqref{eq:2.20} by $\tilde \delta(\xi)$ and letting $|\xi|\rightarrow \infty$ we obtain
$$\displaystyle 
\lim_{|\xi|\rightarrow \infty}  \tilde \delta(\xi)  = \lim_{|\xi|\rightarrow \infty} \frac{1}{k_0^2}
\left[ \tilde \delta(\xi)(\tilde\alpha^2(\xi) + \tilde\beta^2(\xi)) + 2 \tilde\alpha(\xi)\tilde\delta(\xi)^2 
 -  \tilde \delta(\xi)\frac{R\bt + R^2 b}{\xi^2}\right] = 0.$$
Now we can proceed as in Case I, to obtain
$\lim_{|\xi| \rightarrow \infty} \delta(\xi) = \omega_0.$


\end{proof}

\begin{figure}[ht!]
\centering
\includegraphics[width=70mm,height = 40mm]{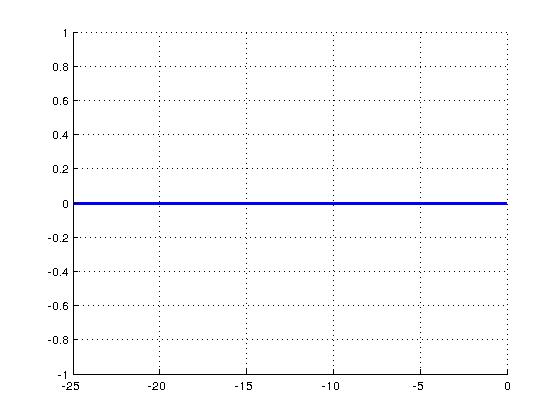}
%
\includegraphics[width=70mm,height = 40mm]{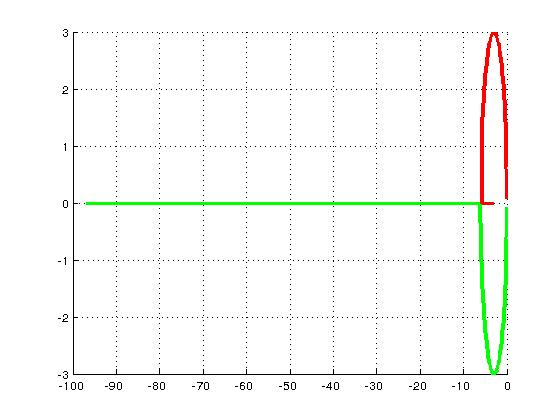}
\caption{Eigenvalues of $A(\xi)$ in the complex plane for $\xi$ varies in the interval $(0,5)$ 
when $\nu_0 = \bar \theta = b = 1, R =3 $ and $k_0 = 4.$ Blue line represents $-\lambda(\xi)$,
green line represents $-\mu(\xi)$ and red line represents $-\delta(\xi).$
}
\label{overflow2}
\end{figure}

In fact more can be said about $\delta(\xi).$ We have 
\begin{lem} \label{lem:2.6}
 Let $A(\xi)$ be  defined as in \eqref{eq:matrix} and  $-\delta(\xi)$ be the eigenvalue satisfying 
 $\displaystyle \lim_{|\xi|\rightarrow \infty} \delta(\xi) = \omega_0.$
 Then there exists a constant $0 < a_1$ such that for all $\xi \in \mathbb{R},$ $$0 \leq \mathrm{Re} \ \delta(\xi) < a_1.$$
 Further for $|\xi| \geq \xi_0,$ where $\xi_0$ as in Lemma \ref{lem:2.5}, $\delta(\xi)$ is differentiable and we have 
 \begin{equation} \label{eq:2.13}
  \left|\frac{d}{d \xi} \delta(\xi)\right| \leq \frac{C}{|\xi|},
 \end{equation}
for some positive constant $C$. 
 \end{lem}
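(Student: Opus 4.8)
The plan is to treat the two assertions separately. For the uniform bound $0 \le \mathrm{Re}\,\delta(\xi) < a_1$ I would rely on continuity together with the asymptotics already established in Lemma \ref{lem:2.5}, whereas for the derivative estimate \eqref{eq:2.13} I would differentiate the characteristic equation \eqref{eq:characteristic} via the implicit function theorem and keep track of the orders in $\xi$.

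First, the lower bound $\mathrm{Re}\,\delta(\xi) \ge 0$ is immediate from Lemma \ref{lem:2.5}: since $-\delta(\xi)$ is an eigenvalue of $A(\xi)$ and all eigenvalues have nonpositive real part, $\mathrm{Re}(-\delta(\xi)) \le 0$. For the upper bound I would show that $\mathrm{Re}\,\delta$ is bounded on all of $\mathbb{R}$. On the compact interval $[-\xi_0,\xi_0]$ every root $z$ of \eqref{eq:characteristic} obeys the Cauchy bound $|z| \le 1 + \max\{|a(\xi)|,|b(\xi)|,|c(\xi)|\}$, whose right-hand side is a continuous, hence bounded, function of $\xi$ there; this bounds $\mathrm{Re}\,\delta$ on $[-\xi_0,\xi_0]$ irrespective of how the three roots are labelled, which conveniently avoids any continuity issue in the selection of the branch. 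For $|\xi|\ge\xi_0$, Lemma \ref{lem:2.5} gives $\delta(\xi)\to\omega_0$, so $\mathrm{Re}\,\delta$ is bounded there as well. Taking $a_1$ strictly larger than the resulting uniform bound yields the claim.

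For the derivative bound I would work with the polynomial $p(z,\xi) = z^3 + (\nu_0+k_0)\xi^2 z^2 + [(R\bt+R^2 b)\xi^2 + k_0\nu_0\xi^4]\,z + k_0 R\bt \xi^4$, whose roots are the eigenvalues $-\lambda(\xi),-\mu(\xi),-\delta(\xi)$. By the analysis in Lemma \ref{lem:2.5}, for $|\xi|\ge\xi_0$ the root $-\delta(\xi)$ is simple (three distinct real roots in the case $\nu_0\ne k_0$, and the unique, hence simple, real root in the case $\nu_0=k_0$, where $D(\xi)<0$ forces the conjugate pair to be genuinely complex). Since the coefficients of $p$ are polynomials in $\xi$, the implicit function theorem applies and gives that $\delta$ is differentiable on $|\xi| \ge \xi_0$ with
\[
\delta'(\xi) = \frac{\partial_\xi p(-\delta(\xi),\xi)}{\partial_z p(-\delta(\xi),\xi)}.
\]
I would then insert $z=-\delta(\xi)$ and use that $\delta(\xi)$ is bounded (the first part). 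A direct computation gives $\partial_z p(-\delta,\xi) = k_0\nu_0\xi^4 + O(\xi^2)$, which is bounded below by $\tfrac12 k_0\nu_0\xi^4$ for $|\xi|$ large, while $\partial_\xi p(-\delta,\xi) = 4k_0\xi^3\,(R\bt - \nu_0\delta(\xi)) + O(\xi)$; since $\delta(\xi)$ is bounded, the numerator is $O(\xi^3)$. Dividing, $|\delta'(\xi)| \le C/|\xi|$ for $|\xi|\ge\xi_0$ after enlarging $\xi_0$ if necessary, which is \eqref{eq:2.13}.

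The genuinely delicate points are organisational rather than computational. The first is to justify that $\delta(\xi)$ is a simple root for $|\xi|\ge\xi_0$, so that the implicit function theorem produces a differentiable branch and $\partial_z p(-\delta,\xi)\ne 0$; this is exactly what the case distinction $D(\xi)>0$ versus $D(\xi)<0$ in Lemma \ref{lem:2.5} supplies. The second is bounding $\mathrm{Re}\,\delta$ uniformly over the whole line, where the branch $\delta$ need not be continuously selectable through the compact region; invoking the coefficient-based Cauchy bound there sidesteps the labelling problem entirely. I note that the decay rate $1/|\xi|$ (rather than a mere $O(1)$ bound) comes essentially for free, because $\partial_\xi p$ is one power of $\xi$ smaller than $\partial_z p$; in particular one does not even need the sharper fact that $R\bt - \nu_0\delta(\xi)\to 0$.
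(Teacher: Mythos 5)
Your proof is correct, and its core coincides with the paper's: both establish \eqref{eq:2.13} by implicitly differentiating the characteristic equation \eqref{eq:characteristic} at the root $-\delta(\xi)$ and using the boundedness of $\delta$ to see that the $\xi$-derivative of the polynomial is one power of $\xi$ smaller than its $z$-derivative. The differences are in the supporting steps, and they work in your favour. For differentiability the paper appeals to the explicit Cardano-type formula \eqref{eq:formula} (noting its ingredients are differentiable where $D\neq 0$, $D_0\neq 0$), whereas you invoke simplicity of the root $-\delta(\xi)$ for $|\xi|\geq \xi_0$ and the implicit function theorem; your route is cleaner and also directly guarantees $\partial_z p(-\delta(\xi),\xi)\neq 0$, which the quotient formula needs anyway. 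For the bound $0\leq \mathrm{Re}\,\delta(\xi)<a_1$ the paper simply asserts boundedness "from Lemma \ref{lem:2.5}", which is terse: on $|\xi|\leq\xi_0$ the labelling of the three eigenvalues as $\lambda,\mu,\delta$ is not canonically fixed, so a branch-independent argument is really needed; your Cauchy bound on all roots over the compact interval supplies exactly this and is a genuine improvement in rigour. Two small housekeeping points: boundedness of $\delta$ on $|\xi|\geq\xi_0$ uses its continuity there (available from the case analysis of Lemma \ref{lem:2.5}, or from your own IFT step, so state the order of the argument accordingly), and if you enlarge $\xi_0$ to absorb the $O(\xi^2)$ and $O(\xi)$ error terms, recover the estimate on the intermediate compact annulus by continuity of $\delta'$ and by enlarging $C$, so that \eqref{eq:2.13} holds for the original $\xi_0$ as the lemma states.
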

 \begin{proof}
  From Lemma \ref{lem:2.5}, $\delta(\xi)$ lies on the right half side of the complex plane and it is bounded for all $\xi \in \mathbb{R}.$ Thus 
  there exists a constant $0< a_1,$ such that $$0 \leq \mathrm{Re} \ \delta(\xi) < a_1.$$
  The coefficients of the characteristic polynomial \eqref{eq:characteristic} are differentiable and from the formula \eqref{eq:formula}
  it is easy to deduce that, the real and complex  parts of the roots are differentiable for $|\xi| \geq \xi_0.$
  For $|\xi| > \xi_0,$ $\delta(\xi)$ is real and hence differentiable.
 As 
  $-\delta(\xi)$ is a root of the characteristic equation \eqref{eq:characteristic}, we have
  \[
- \delta(\xi)^3 + (\nu_0 + k_0)\xi^2 \delta(\xi)^2 - (R\bt \xi^2 + R^2 b \xi^2 + k_0 \nu_0 \xi^4)\delta(\xi) + k_0R\bt \xi^4 = 0    
  \]
Differentiating the equation with respect to $\xi$ and using the fact that $\delta(\xi)$ is bounded for all $\xi$ we obtain the estimate
\eqref{eq:2.13}.
 \end{proof}

\begin{lem} \label{lem:eigenfunction}
Let $A(\xi)$ be defined as in \eqref{eq:matrix} and $-\delta(\xi)$ is the eigenvalue satisfying, 
 $\displaystyle \lim_{|\xi|\rightarrow \infty} \delta(\xi) = \omega_0.$ 
 The eigenfunction of $A(\xi)$ corresponding to $-\delta(\xi)$ is $\displaystyle\left(1,\frac{i \delta(\xi)}{\br \xi}, d_\delta(\xi)\right)$
 where \\
 $$ \displaystyle d_\delta(\xi) = -\frac{\delta(\xi)^2 - \nu_0 \xi^2 \delta(\xi) + R \bt \xi^2}{R \br \xi^2}.$$
 For $|\xi|$ sufficiently large, we have 
 $$|d_{\delta}(\xi)| \leq \frac{C}{|\xi|^2} .$$
 for some positive constant $C$. 
\end{lem}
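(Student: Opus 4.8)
The plan is to read off the eigenvector by solving the homogeneous system $(A(\xi) + \delta(\xi)I)w = 0$ for $w = (w_1, w_2, w_3)$, where $A(\xi)$ is the matrix in \eqref{eq:matrix}. Since $-\delta(\xi)$ is a root of the characteristic polynomial \eqref{eq:characteristic}, the matrix $A(\xi) + \delta(\xi)I$ is singular and I may normalise $w_1 = 1$. The first row gives $\br i\xi\,w_2 = -\delta(\xi)$, hence $w_2 = i\delta(\xi)/(\br\xi)$. Substituting $w_1 = 1$ and this value of $w_2$ into the second row and solving the resulting scalar equation for $w_3$ produces, after a short computation, exactly the stated expression $w_3 = d_\delta(\xi)$. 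The third row then imposes no new condition: because $A(\xi)+\delta(\xi)I$ has a nontrivial kernel, the vector obtained from the first two rows automatically satisfies the third, so $\bigl(1, i\delta(\xi)/(\br\xi), d_\delta(\xi)\bigr)$ is indeed an eigenvector.

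For the decay estimate I would exploit that this third equation can also be used the other way around: the third row of the system reads $\tfrac{R\bt}{c_v}i\xi\,w_2 - k_0\xi^2 w_3 = -\delta(\xi)w_3$, and inserting $w_2 = i\delta(\xi)/(\br\xi)$ gives the equivalent closed form
\[
d_\delta(\xi) = w_3 = \frac{R\bt\,\delta(\xi)}{c_v\br\bigl(\delta(\xi) - k_0\xi^2\bigr)} .
\]
(The equality of this with the formula in the statement is just the characteristic equation \eqref{eq:characteristic} written out, so it serves as a consistency check.) From this form the bound is transparent: by Lemma \ref{lem:2.6} the numerator $R\bt\,\delta(\xi)$ is bounded, while $|\delta(\xi) - k_0\xi^2| \ge k_0\xi^2 - |\delta(\xi)| \ge c\,\xi^2$ for $|\xi|$ large, so $|d_\delta(\xi)| \le C/|\xi|^2$.

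If one prefers to estimate the stated formula directly, the key structural point is a cancellation of the $\xi^2$-order terms in its numerator. Using $\omega_0 = R\bt/\nu_0$ from \eqref{eq:lim}, I would rewrite
\[
\delta(\xi)^2 - \nu_0\xi^2\delta(\xi) + R\bt\xi^2 = \delta(\xi)^2 - \nu_0\xi^2\bigl(\delta(\xi) - \omega_0\bigr),
\]
so that, $\delta(\xi)$ being bounded, the numerator stays bounded precisely when $\delta(\xi) - \omega_0 = O(|\xi|^{-2})$. This sharper rate (Lemma \ref{lem:2.5} only gives $\delta(\xi)\to\omega_0$) follows by refining that lemma through identity \eqref{eq:2.17}: Lemma \ref{lem:2.5} shows that $|\lambda(\xi)|$ and $|\mu(\xi)|$ grow at the rate $\xi^2$, so $|\lambda(\xi)|,|\mu(\xi)| \ge c\,\xi^2$ for large $|\xi|$, whence $\lambda(\xi)^{-1}+\mu(\xi)^{-1} = O(|\xi|^{-2})$ and therefore $\delta(\xi)^{-1} = \nu_0/(R\bt) + O(|\xi|^{-2})$; inverting gives $\delta(\xi) = \omega_0 + O(|\xi|^{-2})$ and hence the same bound $|d_\delta(\xi)| \le C/|\xi|^2$.

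I expect the eigenvector computation to be routine; the only genuine subtlety is the decay rate. Via the $d_\delta$ formula alone this requires the refined asymptotic $\delta(\xi) - \omega_0 = O(|\xi|^{-2})$, which is the real obstacle, whereas the equivalent third-row expression circumvents it and delivers the estimate directly. I would therefore organise the proof around that closed form.
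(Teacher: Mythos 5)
Your proposal is correct, and the route you organise it around is genuinely different from the paper's. The paper does not verify the eigenvector formula at all (it is stated and used), and it proves the decay estimate by what you call your fallback: starting from identity \eqref{eq:2.17} it derives $|\xi^2(R\bt - \nu_0\delta(\xi))| \leq C$ for $|\xi|$ large --- which is exactly the refined asymptotic $\delta(\xi) - \omega_0 = O(|\xi|^{-2})$ you describe --- and then boundedness of the numerator $\delta^2 - \nu_0\xi^2\delta + R\bt\xi^2 = \delta^2 + \xi^2(R\bt - \nu_0\delta)$ gives the bound. Your preferred argument via the third row, $d_\delta(\xi) = \frac{R\bt\,\delta(\xi)}{c_v\br\,(\delta(\xi) - k_0\xi^2)}$, is a genuine alternative: it needs only the boundedness of $\delta$ (Lemma \ref{lem:2.6}) and the trivial lower bound $|\delta(\xi) - k_0\xi^2| \geq k_0\xi^2 - |\delta(\xi)| \geq c\,\xi^2$, so it bypasses the rate refinement entirely; and the two closed forms for $d_\delta$ indeed coincide precisely because $-\delta(\xi)$ is a root of \eqref{eq:characteristic}, as you note. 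The trade-off is that the paper's computation produces the sharper statement $\delta(\xi) = \omega_0 + O(|\xi|^{-2})$ as a byproduct, whereas yours is shorter and does not require controlling $\lambda(\xi)/\xi^2$ and $\mu(\xi)/\xi^2$ from below (a point the paper, like your fallback, needs and which in the case $\nu_0 = k_0$ involves the complex pair). One small repair in your eigenvector step: nontriviality of the kernel of $A(\xi)+\delta(\xi)I$ alone does not force a solution of the first two rows to satisfy the third; you also need the first two rows to be linearly independent, so that their common solution set is one-dimensional and hence equals the kernel. This is immediate here for $\xi \neq 0$, since the first row $(\delta,\ \br i\xi,\ 0)$ has vanishing third entry while the second row has third entry $Ri\xi \neq 0$.
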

\begin{proof}
From \eqref{eq:2.17}, we first obtain 
\begin{align*}
 \xi^2(-\nu_0 \delta(\xi) + R\bar \theta) = R\bar\theta \delta(\xi)\left(\frac{1}{k_0} + \frac{Rb}{k_0\bar\theta} - 
 \frac{1}{\lambda(\xi) / \xi^2} - \frac{1}{\mu(\xi) / \xi^2} \right).
\end{align*}
Hence for $|\xi|$ sufficiently large, we obtain 
\begin{align*}
 |\xi^2(-\nu_0 \delta(\xi) + R\bar \theta)| \leq C,
\end{align*}
and the estimate of $d_\delta(\xi)$ follows .
\end{proof}

\begin{figure}[ht!]
\centering
\includegraphics[width=70mm,height = 40mm]{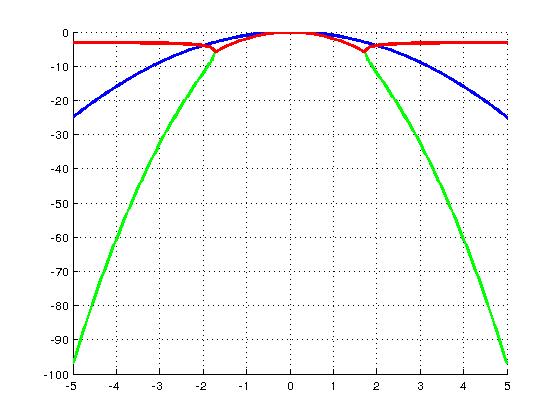}
\caption{In the horizontal axis we represent $\xi$  and the vertical one the real part of  
eigenvalues of $A(\xi)$  for $\xi$ varies in the interval  $(-5,5)$ 
when $\nu_0 = \bar \theta = b = 1, R =3 $ and $k_0 = 4.$ Blue line represents $\mathrm{Re}(-\lambda(\xi))$,
green line represents $\mathrm{Re}(-\mu(\xi))$ and red line represents $\mathrm{Re}(-\delta(\xi)).$ From the figure 
it is clear that the branch corresponding to 
$\delta(\xi)$ is of hyperbolic type, while the other two are parabolic type.
}
\label{overflow3}
\end{figure}

We now want to give a representation formula for  solution of \eqref{eq:adjoint2}.
We have the following proposition. 
\begin{prop}
 Let us consider
 \begin{equation} \label{eq:terminal+fr}
\displaystyle (\sh_T, \vh_T, \ph_T) = \sh_T(\xi)\left(1,\frac{i \delta(\xi)}{\br \xi}, d_\delta(\xi)\right),
 \end{equation}
 for a suitable $\sh_T \in L^2(\mathbb{R}),$
such that $(\sh_T, \vh_T, \ph_T) \in (L^2(\mathbb{R}))^3.$ Then 
\begin{align} \label{eq:representation}
 & \sigma(x,t) = \frac{1}{2 \pi} \int_{\mathbb{R}} \sh_T(\xi) e^{i x \xi} e^{-\delta(\xi) (T-t)} \ d\xi, \notag \\
 & v(x,t) = \frac{1}{2\pi} \int_{\mathbb{R}} \sh_T(\xi) e^{i x \xi} \frac{i \delta(\xi)}{\br \xi} e^{-\delta(\xi) (T-t)} \ d\xi, \notag \\
& \phi(x,t) = \frac{1}{2\pi} \int_{\mathbb{R}} \sh_T(\xi) e^{i x \xi} d_{\delta}(\xi) e^{-\delta(\xi) (T-t)} \ d\xi,
 \end{align}
 is the solution of \eqref{eq:adjoint2}, corresponding to the terminal condition
\begin{align}  \label{eq:adjoint_condn}
 & \sigma_T(x) = \frac{1}{2 \pi} \int_{\mathbb{R}} \sh_T(\xi) e^{i x \xi}  \ d\xi, \notag \\
 & v_T(x) = \frac{1}{2\pi} \int_{\mathbb{R}} \sh_T(\xi) e^{i x \xi} \frac{i \delta(\xi)}{\br \xi}  \ d\xi, \notag \\
& \phi_T(x) = \frac{1}{2\pi} \int_{\mathbb{R}} \sh_T(\xi) e^{i x \xi} d_{\delta}(\xi)  \ d\xi.
 \end{align} 
\end{prop}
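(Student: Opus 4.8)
The heart of the argument is the observation, supplied by Lemma \ref{lem:eigenfunction}, that the vector
\[
\mathbf{e}(\xi):=\left(1,\frac{i\delta(\xi)}{\br\xi},d_\delta(\xi)\right)
\]
is an eigenvector of $A(\xi)$ with eigenvalue $-\delta(\xi)$. Since the terminal datum \eqref{eq:terminal+fr} is, for each fixed $\xi$, the scalar $\sh_T(\xi)$ times this eigenvector, the general solution formula \eqref{eq:ODEsoln} degenerates to a purely scalar evolution: applying the matrix exponential to an eigenvector gives
\[
e^{A(\xi)(T-t)}\,\sh_T(\xi)\,\mathbf{e}(\xi)=\sh_T(\xi)\,e^{-\delta(\xi)(T-t)}\,\mathbf{e}(\xi),
\]
which is exactly the integrand appearing in \eqref{eq:representation}. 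The plan is therefore to record this algebraic identity first, then to define $(\sigma,v,\phi)$ by the inverse Fourier transforms \eqref{eq:representation} and verify three things: that the integrals converge and define functions in the stated class, that differentiation under the integral sign is legitimate, and that the resulting functions solve \eqref{eq:adjoint2} with terminal value \eqref{eq:adjoint_condn}.

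For convergence and regularity the estimates are already available. Because $\mathrm{Re}\,\delta(\xi)\ge 0$ for all $\xi$ (Lemma \ref{lem:2.5}) and $t\le T$, one has $\left|e^{-\delta(\xi)(T-t)}\right|\le 1$, so the first component of \eqref{eq:representation} is dominated by $\sh_T$ uniformly in $t\in[0,T]$. The second and third components carry the extra multipliers $\frac{i\delta(\xi)}{\br\xi}$ and $d_\delta(\xi)$, which by Lemma \ref{lem:eigenfunction} satisfy $\frac{\delta(\xi)}{\xi}\to 0$ and $d_\delta(\xi)=O(|\xi|^{-2})$ as $|\xi|\to\infty$; thus these factors are bounded at high frequency, and the hypothesis $(\sh_T,\vh_T,\ph_T)\in(L^2(\mathbb{R}))^3$ is precisely what keeps $\sh_T$ times each of the three factors square integrable, in particular near $\xi=0$ where $\delta(\xi)/\xi$ might otherwise be singular. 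This yields $(\sigma,v,\phi)\in C([0,T];(L^2(\mathbb{R}))^3)$, as required for the adjoint problem.

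With the solution placed in the right space, the verification of the equations is forced by the construction and requires no new computation of substance. Differentiating \eqref{eq:representation} in $t$ produces a factor $\delta(\xi)$, and each spatial derivative produces a factor $i\xi$; substituting these into the three lines of \eqref{eq:adjoint2} and factoring out the common $\sh_T(\xi)e^{ix\xi}e^{-\delta(\xi)(T-t)}$ reduces each equation, under the integral sign, to a single scalar identity in $\xi$. These three identities together say nothing more than $A(\xi)\mathbf{e}(\xi)=-\delta(\xi)\mathbf{e}(\xi)$, i.e. the eigenvector relation, so they hold automatically. Evaluating \eqref{eq:representation} at $t=T$ gives back \eqref{eq:adjoint_condn}, which is the terminal condition; uniqueness of solutions of \eqref{eq:adjoint2} then identifies this as \emph{the} solution.

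The only genuine obstacle is analytic rather than algebraic: justifying the interchange of the $t$- and $x$-derivatives with the integral and confirming membership in the energy space, the delicate point being the low-frequency region, where $\delta(\xi)/(\br\xi)$ is not a priori bounded and where, unlike the two parabolic branches, the branch $\delta(\xi)$ provides no smoothing (it is of hyperbolic type, as noted after Lemma \ref{lem:eigenfunction}). This is exactly why the statement is phrased for a \emph{suitable} $\sh_T$: in the Gaussian-beam construction to follow, $\sh_T$ will be concentrated at high frequency, so its support avoids a neighbourhood of $\xi=0$, all multipliers are then smooth and bounded on that support, and the dominated-convergence arguments needed to differentiate under the integral become routine.
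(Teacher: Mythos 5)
Your proposal is correct and follows essentially the same route as the paper: the terminal datum is a scalar multiple of the eigenvector of $A(\xi)$ for $-\delta(\xi)$, so the matrix exponential in \eqref{eq:ODEsoln} collapses to the scalar factor $e^{-\delta(\xi)(T-t)}$, and inverting the Fourier transform yields \eqref{eq:representation}. The paper's proof is terser (it simply asserts that $(\sh,\vh,\ph)\in (L^2(0,T;L^2(\mathbb{R})))^3$ and inverts), while you additionally spell out the boundedness of the multipliers and the low-frequency caveat behind the word ``suitable''; this is a fleshing-out, not a different argument.
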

\begin{proof}
Denote $(\sh,\vh,\ph)$ to be the solution of \eqref{eq:adjoint-Fr} corresponding to the terminal condition defined in \eqref{eq:terminal+fr}. 
Then from \eqref{eq:ODEsoln}, the solution of \eqref{eq:adjoint-Fr} can be written as 
\begin{equation} \label{eq:representation2}
 \sh(\xi, t) = e^{-\delta(\xi) (T-t)} \sh_T,\quad \vh(\xi, t) = e^{-\delta(\xi) (T-t)}  \frac{i \delta(\xi)}{\br \xi} \sh_T, \quad 
 \ph(\xi, t) = e^{-\delta(\xi) (T-t)} d_\delta(\xi) \sh_T.
\end{equation}
It is easy to verify that  $(\sh,\vh,\ph) \in L^2(0,T;L^2(\mathbb{R}))^3.$
By taking the inverse of Fourier transform, we obtain, $(\sigma, v, \phi)$ defined as in  \eqref{eq:representation}, is the solution of system \eqref{eq:adjoint2} corresponding to
the terminal condition \eqref{eq:adjoint_condn}.
\end{proof}

We are now interested in the construction of some particular solutions of the equation \eqref{eq:adjoint2}, which are localized 
in a neighbourhood of some $x_0 \in \mathbb{R}$. For that, we choose $\sh_T^\epsilon$ in \eqref{eq:terminal+fr} using a suitable cut off function 
$\psi$ and analyze the solution $(\sigma^\epsilon,  v^\epsilon, \phi^\epsilon)$ of \eqref{eq:adjoint-Fr} given by \eqref{eq:representation}.
\begin{thm} \label{thm4}
 Let $\psi$ be a smooth function compactly supported in $(0,1)$ and of unit $L^2$ norm. For any $\epsilon > 0$, sufficiently small 
 and for any $x_0 > 0$,
 let us take 
 \begin{equation}
 \displaystyle
  \sh_T^\epsilon(\xi) = \epsilon^{\frac{1}{4}} \psi\left(\sqrt{\epsilon}\left(\xi - \frac{1}{\epsilon}\right)\right) e^{-i x_0 \xi}.
 \end{equation}
Let  $(\sigma_T^\epsilon(x),v_T^\epsilon(x),\phi_T^\epsilon(x))$  and 
$(\sigma^\epsilon(x,t), v^\epsilon(x,t),\phi^\epsilon(x,t))$ be as in \eqref{eq:adjoint_condn} 
and  \eqref{eq:representation} respectively. Then they satisfy the following
\begin{itemize}
 \item[(i)] $\frac{1}{2\pi} e^{-2 a_1 T} \leq \|\sigma^\epsilon(\cdot,0)\|^2_{L^2(\mathbb{R})} 
 \leq \frac{1}{2\pi} $,
 \item[(ii)] For any $\eta > 0$ ,  there exists a constant $C$ independent of $\epsilon$ such that 
 \begin{equation}
  \|\sigma^\epsilon(\cdot,t)\|^2_{L^2(0,T;L^2(|x - x_0|\geq \eta))} \leq C \sqrt{\epsilon} ,
 \end{equation}
\item[(iii)]
For some positive constant $C$ independent of $\epsilon$, 
\begin{equation}
 \|v^\epsilon\|^2_{L^2(0,T;L^2(\mathbb{R}))} \leq C \epsilon^{2} , \qquad 
 \|\phi^\epsilon\|^2_{L^2(0,T;L^2(\mathbb{R}))} \leq C \epsilon^{4} .
\end{equation}
\end{itemize}
\end{thm}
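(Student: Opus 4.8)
The plan is to argue entirely on the Fourier side, using the explicit formula \eqref{eq:representation2} for $(\sh,\vh,\ph)$ together with Plancherel's theorem in the form $\|f\|_{L^2(\mathbb{R})}^2 = \frac{1}{2\pi}\|\hat f\|_{L^2(\mathbb{R})}^2$. First I would record the two facts that drive every estimate: by the change of variable $y = \sqrt{\epsilon}\,(\xi - 1/\epsilon)$ and the unit $L^2$ norm of $\psi$,
\[
\int_{\mathbb{R}} |\sh_T^\epsilon(\xi)|^2\, d\xi = \epsilon^{1/2}\int_{\mathbb{R}} \Big|\psi\big(\sqrt{\epsilon}(\xi - 1/\epsilon)\big)\Big|^2\, d\xi = \int_{\mathbb{R}} |\psi(y)|^2\, dy = 1,
\]
and the support of $\sh_T^\epsilon$ lies in $\{\,1/\epsilon \le \xi \le 1/\epsilon + 1/\sqrt{\epsilon}\,\}$, so on it $\xi \ge 1/\epsilon$, i.e.\ $|\xi|$ is as large as we please once $\epsilon$ is small. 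With these in hand, (i) and (iii) are immediate. For (i), \eqref{eq:representation2} and Plancherel give $\|\sigma^\epsilon(\cdot,0)\|_{L^2}^2 = \frac{1}{2\pi}\int_{\mathbb{R}} e^{-2\,\mathrm{Re}\,\delta(\xi)T}|\sh_T^\epsilon|^2\, d\xi$, and the bound $0 \le \mathrm{Re}\,\delta(\xi) < a_1$ from Lemma \ref{lem:2.6} pinches the exponential between $e^{-2a_1 T}$ and $1$, which combined with the normalization is exactly (i). For (iii), the same computation for $v^\epsilon$ produces the extra factor $|\delta(\xi)|^2/(\br^2\xi^2)$; since $|\delta| \le a_1$ and $\xi \ge 1/\epsilon$ on the support this factor is $\le (a_1^2/\br^2)\,\epsilon^2$, while $\mathrm{Re}\,\delta \ge 0$ and $T-t\ge 0$ keep the exponential $\le 1$, so integrating in $t$ yields $\|v^\epsilon\|^2 \le C\epsilon^2$; for $\phi^\epsilon$ the factor is $|d_\delta(\xi)|^2 \le C/|\xi|^4 \le C\epsilon^4$ by Lemma \ref{lem:eigenfunction}, giving the power $\epsilon^4$.

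The real work is (ii), the spatial concentration of $\sigma^\epsilon$ at $x_0$, and here I would run a non-stationary phase argument. Substituting $\sh_T^\epsilon$ into \eqref{eq:representation} and changing variables to $y = \sqrt{\epsilon}(\xi - 1/\epsilon)$ rewrites the density as a single oscillatory integral,
\[
\sigma^\epsilon(x,t) = \frac{\epsilon^{-1/4}}{2\pi}\,e^{i(x-x_0)/\epsilon}\int_{\mathbb{R}} g_\epsilon(y)\, e^{i(x-x_0)y/\sqrt{\epsilon}}\, dy,\qquad g_\epsilon(y) := \psi(y)\,e^{-\delta\left(1/\epsilon + y/\sqrt{\epsilon}\right)(T-t)},
\]
in which the effective frequency $(x-x_0)/\sqrt{\epsilon}$ is large precisely when $|x-x_0| \ge \eta$ and $\epsilon$ is small. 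Since $\psi$ is supported in $(0,1)$, the amplitude $g_\epsilon$ is smooth and compactly supported, so one integration by parts in $y$ leaves no boundary term and extracts the factor $\sqrt{\epsilon}/(x-x_0)$:
\[
\big|\sigma^\epsilon(x,t)\big| \le \frac{\epsilon^{-1/4}}{2\pi}\,\frac{\sqrt{\epsilon}}{|x-x_0|}\int_{\mathbb{R}} \big|g_\epsilon'(y)\big|\, dy .
\]

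The decisive estimate is that $\int_{\mathbb{R}} |g_\epsilon'(y)|\,dy$ stays bounded uniformly in $\epsilon$. By the chain rule $\frac{d}{dy}\,\delta(1/\epsilon + y/\sqrt{\epsilon}) = \epsilon^{-1/2}\,\delta'(1/\epsilon + y/\sqrt{\epsilon})$, and here the $\epsilon^{-1/2}$ that threatens to blow up is exactly cancelled by the derivative decay $|\delta'(\xi)| \le C/|\xi| \le C\epsilon$ of Lemma \ref{lem:2.6}, valid because $\xi \ge 1/\epsilon$ on the support; using also $|e^{-\delta(T-t)}| \le 1$ one gets $|g_\epsilon'(y)| \le |\psi'(y)| + C\sqrt{\epsilon}\,T\,|\psi(y)|$, whose $L^1$ norm is bounded independently of $\epsilon$. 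Hence $|\sigma^\epsilon(x,t)| \le C\,\epsilon^{1/4}/|x-x_0|$, and squaring, integrating over $\{|x-x_0|\ge\eta\}$ where $\int dx/|x-x_0|^2 = 2/\eta$, and then over $t\in(0,T)$, produces $\|\sigma^\epsilon\|_{L^2(0,T;L^2(|x-x_0|\ge\eta))}^2 \le C\sqrt{\epsilon}$, which is (ii).

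I expect the uniform-in-$\epsilon$ control of $\int|g_\epsilon'|$ to be the only genuine obstacle: everything rests on the cancellation between the $\epsilon^{-1/2}$ from the chain rule and the $O(\epsilon)$ decay of $\delta'$, so Lemma \ref{lem:2.6} is the linchpin. A single integration by parts already delivers the required $\sqrt{\epsilon}$ rate; if one wanted a higher power of $\epsilon$ one would iterate, at the cost of needing bounds on higher derivatives of $\delta$, which are not needed here.
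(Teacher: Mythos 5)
Your proof is correct and follows essentially the same route as the paper: Plancherel together with the bounds on $\delta(\xi)$ and $d_\delta(\xi)$ from Lemmas \ref{lem:2.6} and \ref{lem:eigenfunction} for (i) and (iii), and for (ii) the same change of variables plus one integration by parts, with the $\epsilon^{-1/2}$ from the chain rule cancelled by $|\delta'(\xi)|\leq C/|\xi|$ on the support of $\sh_T^\epsilon$. The only cosmetic difference is that you exploit the support condition $\xi\geq 1/\epsilon$ directly where the paper carries the change of variables through the integrals, and you should cite the boundedness of $|\delta(\xi)|$ (from Lemma \ref{lem:2.6}) rather than writing $|\delta|\leq a_1$, since $a_1$ bounds only the real part.
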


\begin{proof}
Let us denote 
\begin{equation} \label{eq:terminal+fr+eps}
 \displaystyle (\sh^\epsilon_T, \vh^\epsilon_T, \ph^\epsilon_T)(\xi) = 
\sh^\epsilon_T(\xi) \left(1,\frac{i \delta(\xi)}{\br \xi}, d_\delta(\xi)\right)
\end{equation}
First we verify that $(\sh^\epsilon_T, \vh^\epsilon_T, \ph^\epsilon_T) \in (L^2(\mathbb{R}))^3.$ We have 
\begin{align*}
 \int_{\mathbb{R}} |\sh^\epsilon_T(\xi)|^2 \ d \xi = \int_{\mathbb{R}} \epsilon^{1/2} 
 \left|\psi\left(\sqrt{\epsilon}\left(\xi - \frac{1}{\epsilon}\right)\right)\right|^2 d\xi =
 \int_{\mathbb{R}} |\psi(\zeta)|^2 \ d\zeta = 1.
\end{align*}
Thus $\sh_T^\epsilon \in L^2(\mathbb{R})$. Now 
\begin{align*}
 \int_{\mathbb{R}} |\vh^\epsilon_T(\xi)|^2 \ d \xi 
 &= \int_{\mathbb{R}} \epsilon^{1/2} 
 \left|\psi\left(\sqrt{\epsilon}\left(\xi - \frac{1}{\epsilon}\right)\right)\right|^2 \frac{\delta(\xi)^2}{\br^2 \xi^2} d\xi \\
 &= \frac{\epsilon^2}{\br^2} \int_{\mathbb{R}} |\psi(\zeta)|^2 
 \frac{\delta(\frac{\zeta}{\sqrt{\epsilon}}+ \frac{1}{\epsilon})^2}{(\zeta\sqrt{\epsilon}+1)^2}\ d\zeta \leq C \epsilon^2.
\end{align*}
So we have $\vh_T^\epsilon \in L^2(\mathbb{R})$. Similarly we can show that $\ph_T^\epsilon \in L^2(\mathbb{R}).$ Thus the representation
formula \eqref{eq:representation} is well defined. Now we will prove (i).

Note that
 $$ \sigma^\epsilon(x,0) = \frac{1}{2 \pi} \int_{\mathbb{R}} \sh^\epsilon_T(\xi) e^{i x \xi} e^{-\delta(\xi) T} \ d\xi .$$
 By Parseval's relation we have 
 $$\|\sigma^\epsilon(\cdot,0)\|^2_{L^2(\mathbb{R})} = \frac{1}{2\pi} \int_{\mathbb{R}} e^{-2 \delta(\xi) T} |\sh^\epsilon_T(\xi)|^2 \ d \xi .$$
Using Lemma \ref{lem:2.6} we have 
\begin{align*}
 \frac{1}{2\pi} e^{-2 a_1 T}\int_{\mathbb{R}} |\sh^\epsilon_T(\xi)|^2 \ d \xi \leq 
 \frac{1}{2\pi} \int_{\mathbb{R}} e^{-2 \delta(\xi) T} |\sh^\epsilon_T(\xi)|^2 \ d \xi \leq
  \frac{1}{2\pi} \int_{\mathbb{R}} |\sh^\epsilon_T(\xi)|^2 \ d \xi.
\end{align*}
Hence we have proved (i). To prove (ii) we have using a change of variable formula
\begin{align*}
 \sigma^\epsilon(x,t) &= \frac{1}{2 \pi}\int_{\mathbb{R}} \epsilon^{\frac{1}{4}} \psi(\sqrt{\epsilon}(\xi - \frac{1}{\epsilon}))
 e^{i (x-x_0) \xi} e^{-\delta(\xi)(T-t)} \ d\xi \\ 
 &= \frac{\epsilon^{-1/4}}{2\pi} \int_{\mathbb{R}} \psi(\zeta) e^{i(x-x_0)(\frac{\zeta}{\sqrt{\epsilon}}+ \frac{1}{\epsilon})}
 e^{-\delta(\frac{\zeta}{\sqrt{\epsilon}}+ \frac{1}{\epsilon})(T-t)} \ d\zeta.
\end{align*}
Note that 
\[
\displaystyle \frac{d}{d \zeta}  e^{i(x-x_0)(\frac{\zeta}{\sqrt{\epsilon}}+ \frac{1}{\epsilon})}  = 
\frac{i(x-x_0)}{\sqrt{\epsilon}} e^{i(x-x_0)(\frac{\zeta}{\sqrt{\epsilon}}+ \frac{1}{\epsilon})}.
\]
Thus for $|x-x_0| \geq \eta > 0,$ we have, after integration by parts
\begin{align}
 \sigma^\epsilon(x,t) &=  \frac{\epsilon^{1/4}}{2\pi i (x-x_0)} \int_0^1 
 \frac{d}{d \zeta} \left( e^{i(x-x_0)(\frac{\zeta}{\sqrt{\epsilon}}+ \frac{1}{\epsilon})} \right) \psi(\zeta)
  e^{-\delta(\frac{\zeta}{\sqrt{\epsilon}}+ \frac{1}{\epsilon})(T-t)} \ d\zeta , \notag \\
  &= \frac{-\epsilon^{1/4}}{2\pi i (x-x_0)} \int_0^1 
   e^{i(x-x_0)(\frac{\zeta}{\sqrt{\epsilon}}+ \frac{1}{\epsilon})}  
   \frac{d}{d \zeta} 
   \left( \psi(\zeta) e^{-\delta(\frac{\zeta}{\sqrt{\epsilon}}+ \frac{1}{\epsilon})(T-t)} \right) \ d\zeta.
\end{align}
Now 
\begin{align}
 \frac{d}{d \zeta} 
   \left( \psi(\zeta) e^{-\delta(\frac{\zeta}{\sqrt{\epsilon}}+ \frac{1}{\epsilon})(T-t)} \right) = 
    e^{-\delta(\frac{\zeta}{\sqrt{\epsilon}}+ \frac{1}{\epsilon})(T-t)} \left(\psi'(\zeta) -\psi(\zeta) 
    \delta'\left(\frac{\zeta}{\sqrt{\epsilon}}+ \frac{1}{\epsilon} \right)\frac{1}{\sqrt{\epsilon}}(T-t) \right).
\end{align}
Thus for $\epsilon$ small, we have for some $C$ independent of $\epsilon$
\begin{align*}
 \left|\frac{d}{d \zeta} 
   \left( \psi(\zeta) e^{-\delta(\frac{\zeta}{\sqrt{\epsilon}}+ \frac{1}{\epsilon})(T-t)} \right) \right| 
   \leq C\left(1 + \frac{\epsilon}{\zeta\sqrt{\epsilon}+1} \frac{1}{\sqrt{\epsilon}}\right) \leq C (1 + \sqrt{\epsilon}).
\end{align*}
Therefore, for $\epsilon$ small enough and $|x - x_0|\geq \eta >0,$ we have 
\begin{align}
 |\sigma^\epsilon(x,t)| \leq C \frac{\epsilon^{1/4}}{2\pi|x - x_0|} (1+ \sqrt{\epsilon}) \leq C \frac{\epsilon^{1/4}}{|x-x_0|}.
\end{align}
Thus there exists a positive constant $C(T,\eta)$, such that 
\begin{align}
 \|\sigma^\epsilon\|^2_{L^2(0,T;L^2(|x-x_0|\geq \eta))} \leq C \sqrt{\epsilon}.
\end{align}
This proves (ii). 
We have
\begin{align*}
 \int_0^T \int_{\mathbb{R}} |v^\epsilon(x,t)|^2 \ dx dt  &= \frac{1}{2 \pi} \int_0^T \int_{\mathbb{R}} |\vh^\epsilon(\xi,t)|^2 \ d\xi dt =
  \frac{1}{2\pi} \int_0^T \int_{\mathbb{R}} \left|e^{-\delta(\xi)(T-t)} \frac{\delta(\xi)}{\br\xi}  \sh^\epsilon_T(\xi) \right|^2 \ d\xi dt, \\
  &\leq \frac{\epsilon^2}{2\pi \br^2} \int_0^T dt  \int_0^1 |\psi(\zeta) |^2
  \frac{|\delta(\frac{\zeta}{\sqrt{\epsilon}}+\frac{1}{\epsilon})|^2}{(\zeta\sqrt{\epsilon}+1)^2} \ d\zeta, \\
  &\leq  C \epsilon^2.
\end{align*}
%
Similarly we can show that 
$$\|\phi^\epsilon\|^2_{L^2(0,T;L^2(\mathbb{R}))} \leq C \epsilon^{4} .$$ 
\end{proof}

Let us give some more properties of $\sigma^\epsilon, v^\epsilon,\phi^\epsilon$ which we will need later on to prove the main results.
\begin{lem} \label{lem:improved-regularity}
 Let $(\sigma_T^\epsilon, v_T^\epsilon, \phi_T^\epsilon)$  and 
 $(\sigma^\epsilon(x,t), v^\epsilon(x,t),\phi^\epsilon(x,t))$ be
 as in Theorem \ref{thm4}. Then $\sigma_T^\epsilon, v_T^\epsilon$ and $ \phi_T^\epsilon$
 lie in $H^1(\mathbb{R}).$ Moreover $(\sigma^\epsilon, v^\epsilon,\phi^\epsilon) \in (H^1(0,T;H^1(\mathbb{R})))^3.$
\end{lem}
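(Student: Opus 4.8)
The plan is to reduce everything to the space-variable Fourier transform and to exploit the single structural fact on which the whole construction rests: by Theorem \ref{thm4} the symbol $\sh_T^\epsilon(\xi)=\epsilon^{1/4}\psi\big(\sqrt{\epsilon}(\xi-\tfrac1\epsilon)\big)e^{-ix_0\xi}$ is \emph{compactly supported}, namely in the interval $\xi\in(\tfrac1\epsilon,\tfrac1\epsilon+\tfrac1{\sqrt\epsilon})$. For $\epsilon$ small this support lies entirely in the high-frequency region $|\xi|\ge\xi_0$ of Lemma \ref{lem:2.5}, so every polynomial weight $(1+\xi^2)^k$ is bounded on it, and by Lemmas \ref{lem:2.5}, \ref{lem:2.6} and \ref{lem:eigenfunction} the three multipliers $\delta(\xi)$, $i\delta(\xi)/(\br\xi)$ and $d_\delta(\xi)$ are all bounded there as well.

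First I would prove the membership of the terminal data in $H^1(\mathbb{R})$. Using the characterization $f\in H^1(\mathbb{R})\iff\int_{\mathbb{R}}(1+\xi^2)|\widehat f(\xi)|^2\,d\xi<\infty$ together with \eqref{eq:adjoint_condn}, the three Fourier transforms are $\sh_T^\epsilon$, $\sh_T^\epsilon\cdot i\delta(\xi)/(\br\xi)$ and $\sh_T^\epsilon\cdot d_\delta(\xi)$. Since $\sh_T^\epsilon$ is bounded and compactly supported and the two extra factors are bounded on its support, the weight $(1+\xi^2)$ is bounded on the support and each of these integrals is finite, giving $\sigma_T^\epsilon,v_T^\epsilon,\phi_T^\epsilon\in H^1(\mathbb{R})$.

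Next I would treat the time–space regularity $(\sigma^\epsilon,v^\epsilon,\phi^\epsilon)\in(H^1(0,T;H^1(\mathbb{R})))^3$. From the representation \eqref{eq:representation2}, the $x$-Fourier transform of each component equals $\sh_T^\epsilon(\xi)$ times a bounded symbol times $e^{-\delta(\xi)(T-t)}$. Because $\mathrm{Re}\,\delta(\xi)\ge0$ (Lemma \ref{lem:2.5}) and $T-t\ge0$, one has $|e^{-\delta(\xi)(T-t)}|\le1$ for $t\in[0,T]$; hence, by Plancherel, $\|\sigma^\epsilon(\cdot,t)\|_{H^1(\mathbb{R})}^2\le\frac1{2\pi}\int_{\mathbb{R}}(1+\xi^2)|\sh_T^\epsilon(\xi)|^2\,d\xi=\|\sigma_T^\epsilon\|_{H^1(\mathbb{R})}^2$ uniformly in $t$, and integrating over $(0,T)$ places $\sigma^\epsilon$ in $L^2(0,T;H^1(\mathbb{R}))$; the same estimate applies verbatim to $v^\epsilon$ and $\phi^\epsilon$. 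For the time derivative, differentiating under the integral sign multiplies the integrand by $\delta(\xi)$, which is bounded on the Fourier support by Lemma \ref{lem:2.6}, so the identical computation yields $\partial_t(\sigma^\epsilon,v^\epsilon,\phi^\epsilon)\in(L^2(0,T;H^1(\mathbb{R})))^3$.

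I do not expect a genuine obstacle: the computation is essentially an application of Plancherel on a compact frequency band. The only point deserving care is verifying that all multipliers and the exponential stay uniformly bounded, and this is precisely what the high-frequency location $\xi\sim1/\epsilon$ guarantees, where Lemmas \ref{lem:2.5}--\ref{lem:eigenfunction} supply $\delta(\xi)\to\omega_0$, $|\delta(\xi)/\xi|\to0$ and $|d_\delta(\xi)|\le C/|\xi|^2$, together with $\mathrm{Re}\,\delta(\xi)\ge0$.
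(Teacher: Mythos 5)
Your proof is correct and follows essentially the same route as the paper: both arguments work on the Fourier side, use the Plancherel characterization of $H^1(\mathbb{R})$, exploit that $\sh_T^\epsilon$ is supported in the band $\xi\in(\tfrac1\epsilon,\tfrac1\epsilon+\tfrac1{\sqrt\epsilon})$ so that the weight $(1+\xi^2)$ is bounded there, and control the time dependence via $|e^{-\delta(\xi)(T-t)}|\le 1$ together with the boundedness of $\delta(\xi)$ (and of $\delta(\xi)/\xi$, $d_\delta(\xi)$ on the support) for the time derivative. The only cosmetic difference is that the paper records the explicit bound $C(1+\epsilon^{-2})$ while you argue qualitatively from compactness of the frequency support; the content is identical.
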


\begin{proof}
 Let us first show that $\sigma_T^\epsilon \in H^1(\mathbb{R}).$
 \begin{align*}
  \int_{\mathbb{R}} (1 +|\xi|^2)\hat\sigma_T^\epsilon(\xi) &= \int_{\mathbb{R}} (1 +|\xi|^2) \epsilon^{1/2} 
 \left|\psi\left(\sqrt{\epsilon}\left(\xi - \frac{1}{\epsilon}\right)\right)\right|^2 d\xi \\
 &\leq C\left(1 + \frac{1}{\epsilon^2}\right) \int_{\mathbb{R}}|\psi(\zeta)|^2 \  d\zeta \leq C\left(1 + \frac{1}{\epsilon^2}\right).
 \end{align*}
Thus $\sigma_T^\epsilon \in H^1(\mathbb{R}).$ In a similar way we can show that $v_T^\epsilon$ and $\phi_T^\epsilon$ 
belong to $ H^1(\mathbb{R}).$ Next we will show $\sigma^\epsilon  \in  H^1(0,T;H^1(\mathbb{R})).$  First we have
\begin{align*}
 \int_0^T\int_{\mathbb{R}} (1+\xi^2) |\hat\sigma^\epsilon(\xi,t)|^2 d\xi dt =  
 &\int_0^T\int_{\mathbb{R}} (1+\xi^2) |\sh_T^\epsilon(\xi)|^2 e^{(-2\delta(\xi)(T-t))} d\xi \ dt \\
&\leq C\left(1 + \frac{1}{\epsilon^2}\right) \int_0^T dt \int_{\mathbb{R}} |\psi(\zeta)|^2 \leq  CT\left(1 + \frac{1}{\epsilon^2}\right).
\end{align*}
Next we have 
\begin{align*}
 \int_0^T\int_{\mathbb{R}} (1+\xi^2) |\hat\sigma_t^\epsilon(\xi,t)|^2 d\xi dt =  
 &\int_0^T\int_{\mathbb{R}} (1+\xi^2) |\delta(\xi)|^2|\sh_T^\epsilon(\xi)|^2 e^{(-2\delta(\xi)(T-t))} d\xi \ dt \\
&\leq C\left(1 + \frac{1}{\epsilon^2}\right) \int_0^T dt \int_{\mathbb{R}} |\psi(\zeta)|^2 \leq  CT\left(1 + \frac{1}{\epsilon^2}\right).
\end{align*}
Thus we have shown $\sigma^\epsilon \in H^1(0,T;H^1(\mathbb{R})).$ Similarly we can show that $v^\epsilon$ and $\phi^\epsilon$ also belong to 
$H^1(0,T;H^1(\mathbb{R})).$
\end{proof}

\begin{lem}
Let $\sigma^\epsilon$ be as in Theorem \ref{thm4}. Then for any $\eta > 0,$ we have the following estimate  
 \begin{equation} \label{eq:sigmaestimate}
  \|\sigma^\epsilon(\cdot,0)\|^2_{L^2(|x - x_0|\leq \eta)}  \geq \frac{1}{4\pi} e^{-2a_1T}
 \end{equation}
\end{lem}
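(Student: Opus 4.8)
The plan is to derive \eqref{eq:sigmaestimate} by playing the uniform lower bound on the full-line norm from part (i) of Theorem \ref{thm4} against a smallness estimate for the mass of $\sigma^\epsilon(\cdot,0)$ outside the neighbourhood $\{|x-x_0|\leq\eta\}$. The starting point is the trivial orthogonal decomposition
\[
\|\sigma^\epsilon(\cdot,0)\|^2_{L^2(\mathbb{R})} = \|\sigma^\epsilon(\cdot,0)\|^2_{L^2(|x-x_0|\leq \eta)} + \|\sigma^\epsilon(\cdot,0)\|^2_{L^2(|x-x_0|\geq \eta)},
\]
which reduces the whole statement to showing that the last term is $o(1)$ as $\epsilon\to 0$, so that the guaranteed total mass is forced to concentrate in the small interval.

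First I would record from part (i) of Theorem \ref{thm4} the lower bound $\|\sigma^\epsilon(\cdot,0)\|^2_{L^2(\mathbb{R})} \geq \frac{1}{2\pi}e^{-2a_1T}$. Next I would revisit the pointwise estimate established \emph{inside} the proof of part (ii): for $\epsilon$ small and $|x-x_0|\geq \eta$ one has $|\sigma^\epsilon(x,t)| \leq C\epsilon^{1/4}/|x-x_0|$ with $C$ independent of $\epsilon$ and, crucially, of $t\in[0,T]$. Specializing this to $t=0$ and integrating the square over $\{|x-x_0|\geq\eta\}$, using $\int_{|x-x_0|\geq\eta}|x-x_0|^{-2}\,dx = 2/\eta$, I obtain
\[
\|\sigma^\epsilon(\cdot,0)\|^2_{L^2(|x-x_0|\geq\eta)} \leq C'\sqrt{\epsilon}
\]
for a constant $C' = C'(\eta,T)$ independent of $\epsilon$.

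Combining these in the decomposition above yields
\[
\|\sigma^\epsilon(\cdot,0)\|^2_{L^2(|x-x_0|\leq\eta)} \geq \frac{1}{2\pi}e^{-2a_1T} - C'\sqrt{\epsilon},
\]
and choosing $\epsilon$ small enough that $C'\sqrt{\epsilon} \leq \frac{1}{4\pi}e^{-2a_1T}$ gives the asserted inequality \eqref{eq:sigmaestimate}. There is no serious obstacle; the only subtlety worth flagging is that one must control the tail at the single time slice $t=0$, whereas part (ii) of Theorem \ref{thm4} is phrased as a space-time $L^2(0,T;\cdot)$ estimate. This causes no difficulty, since the pointwise inequality derived in the course of proving (ii) is already uniform in $t$ and therefore applies verbatim at $t=0$; the point is simply to invoke that pointwise bound rather than the integrated statement of (ii) as a black box.
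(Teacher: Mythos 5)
Your proof is correct and follows essentially the same route as the paper: the paper also bounds the tail $\|\sigma^\epsilon(\cdot,0)\|^2_{L^2(|x-x_0|\geq\eta)}\leq C\sqrt{\epsilon}$ by repeating the pointwise argument from Theorem \ref{thm4}(ii) at the time slice $t=0$, and then subtracts it from the full-line lower bound of part (i), choosing $\epsilon$ small. Your write-up merely makes explicit (the uniformity in $t$ of the pointwise bound, the integral $\int_{|x-x_0|\geq\eta}|x-x_0|^{-2}\,dx$) what the paper compresses into ``proceeding in a similar way as in Theorem \ref{thm4}.''
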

 \begin{proof}
 First by lemma \ref{lem:improved-regularity}, we conclude that $\sigma^\epsilon(\cdot,0) \in H^1(\mathbb{R}).$
 Proceeding in a similar way as in Theorem \ref{thm4}, we first obtain   
 \begin{equation}
  \|\sigma^\epsilon(\cdot,0)\|^2_{L^2(|x - x_0|\geq \eta)} \leq C \sqrt{\epsilon}
 \end{equation}
 for some positive  constant $C,$ independent of $\epsilon.$ Therefore
 \begin{equation} 
  \|\sigma^\epsilon(\cdot,0)\|^2_{L^2(|x - x_0|\leq \eta)} \geq \frac{1}{2\pi} e^{-2a_1T} -  C \sqrt{\epsilon} \geq \frac{1}{4\pi} e^{-2a_1T}
 \end{equation}
 for $\epsilon$ small.
\end{proof}

\begin{lem} \label{rem:trace}
Let $v^\epsilon$ and $\phi^\epsilon$ be as in Theorem \ref{thm4}. Then 
\begin{equation}
 \|v^\epsilon(L,\cdot)\|_{H^1(0,T)} \leq C \epsilon^{3/4} , \mbox{ and } \|\phi^\epsilon(L,\cdot)\|_{H^1(0,T)} \leq C \epsilon^{7/4},
\end{equation}
for any $L\in \mathbb{R}.$
\end{lem}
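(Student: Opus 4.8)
The plan is to estimate the boundary traces $v^\epsilon(L,\cdot)$ and $\phi^\epsilon(L,\cdot)$ directly from the representation formula \eqref{eq:representation}, exploiting the same oscillatory-integral machinery already used in the proof of Theorem \ref{thm4}(ii). The key point is that at a fixed spatial location $x=L$, the functions $t \mapsto v^\epsilon(L,t)$ and $t \mapsto \phi^\epsilon(L,t)$ are smooth in $t$ (by Lemma \ref{lem:improved-regularity}, the solutions lie in $H^1(0,T;H^1(\mathbb{R}))$, so traces in $x$ are well defined and the $t$-regularity is inherited), and the smallness in $\epsilon$ comes entirely from the prefactors $\frac{i\delta(\xi)}{\br \xi}$ and $d_\delta(\xi)$ together with the $\epsilon^{1/4}$ normalization and the localization of $\psi$ near $\xi \approx 1/\epsilon$.

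First I would write out $v^\epsilon(L,t)$ using \eqref{eq:representation} and perform the change of variables $\xi = \frac{\zeta}{\sqrt{\epsilon}} + \frac{1}{\epsilon}$, exactly as in the proof of (ii) and (iii) of Theorem \ref{thm4}. This produces
\begin{equation*}
v^\epsilon(L,t) = \frac{\epsilon^{-1/4}}{2\pi \br} \int_0^1 \psi(\zeta)\, \frac{i\,\delta\!\left(\tfrac{\zeta}{\sqrt{\epsilon}}+\tfrac{1}{\epsilon}\right)}{\tfrac{\zeta}{\sqrt{\epsilon}}+\tfrac{1}{\epsilon}}\, e^{iL\left(\tfrac{\zeta}{\sqrt{\epsilon}}+\tfrac{1}{\epsilon}\right)}\, e^{-\delta\left(\tfrac{\zeta}{\sqrt{\epsilon}}+\tfrac{1}{\epsilon}\right)(T-t)}\, d\zeta.
\end{equation*}
Since $\xi \sim 1/\epsilon$ on the support of $\psi$, the amplitude factor $\frac{\delta(\xi)}{\br\xi}$ is $O(\epsilon)$ by Lemma \ref{lem:2.5} (as $\delta$ is bounded), and the exponential $e^{-\delta(T-t)}$ is bounded above by $e^{-(\mathrm{Re}\,\delta)(T-t)} \le 1$ using $\mathrm{Re}\,\delta \ge 0$ from Lemma \ref{lem:2.5}. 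Combined with the $\epsilon^{-1/4}$ and the $O(\epsilon)$ amplitude, the $L^2(0,T)$ norm of $v^\epsilon(L,\cdot)$ is $O(\epsilon^{3/4})$, and similarly the $\phi^\epsilon(L,\cdot)$ trace picks up the factor $d_\delta(\xi) = O(|\xi|^{-2}) = O(\epsilon^2)$ from Lemma \ref{lem:eigenfunction}, giving $O(\epsilon^{7/4})$.

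For the full $H^1(0,T)$ norm I would also estimate the time derivatives $\partial_t v^\epsilon(L,\cdot)$ and $\partial_t \phi^\epsilon(L,\cdot)$. Differentiating under the integral sign brings down a factor $\delta(\xi)$, which is bounded for all $\xi$ by Lemma \ref{lem:2.6} (namely $0 \le \mathrm{Re}\,\delta(\xi) < a_1$, and in fact $\delta$ itself is bounded), so the time derivatives satisfy the \emph{same} order estimates as the functions themselves. Thus both the $L^2$ part and the $\dot H^1$ part of the trace norm are controlled by the claimed powers of $\epsilon$, and adding them yields $\|v^\epsilon(L,\cdot)\|_{H^1(0,T)} \le C\epsilon^{3/4}$ and $\|\phi^\epsilon(L,\cdot)\|_{H^1(0,T)} \le C\epsilon^{7/4}$.

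The main obstacle I anticipate is making the trace rigorous: the representation \eqref{eq:representation} is an $x$-integral, so pointwise evaluation at $x=L$ requires justifying that the Fourier integral converges (locally uniformly) for each fixed $t$, rather than merely in $L^2_x$. Lemma \ref{lem:improved-regularity} guarantees $(\sigma^\epsilon,v^\epsilon,\phi^\epsilon)\in (H^1(0,T;H^1(\mathbb{R})))^3$, and since $H^1(\mathbb{R})\hookrightarrow C(\mathbb{R})$ in one dimension, the pointwise trace in $x$ is well defined; the $t$-regularity then follows from the $H^1(0,T)$ structure, so the traces genuinely live in $H^1(0,T)$. Once this is in place, everything reduces to bounding the explicit $\zeta$-integrals above, which is routine given the uniform bounds on $\delta$ and the decay of $d_\delta$ already established in Lemmas \ref{lem:2.5}, \ref{lem:2.6} and \ref{lem:eigenfunction}.
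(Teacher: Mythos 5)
Your proposal is correct and follows essentially the same route as the paper: bound the traces pointwise in $t$ by taking absolute values in the Fourier representation, use the boundedness of $\delta(\xi)$ (Lemma \ref{lem:2.6}) and the decay $|d_\delta(\xi)|\leq C|\xi|^{-2}$ (Lemma \ref{lem:eigenfunction}) together with the localization of $\hat\sigma_T^\epsilon$ near $\xi\sim 1/\epsilon$ to extract the factors $\epsilon^{3/4}$ and $\epsilon^{7/4}$, and observe that $\partial_t$ only brings down another bounded factor $\delta(\xi)$; the justification of the trace via Lemma \ref{lem:improved-regularity} is exactly what the paper invokes as well. The only cosmetic slip is the omission of the harmless phase $e^{-ix_0\xi}$ in your displayed formula, which does not affect any estimate.
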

\begin{proof}
 By Lemma \eqref{lem:improved-regularity}, we have $v^\epsilon(L,\cdot)$ and $\phi^\epsilon(L,\cdot)$ lie in $H^1(0,T)$ for any 
 $L\in \mathbb{R}.$ We have 
 \begin{align*}
  |v^\epsilon(L,t)| \leq C \int_{\mathbb{R}}|\hat\sigma_T(\xi)|\frac{|\delta(\xi)|}{|\xi|}  d\xi 
  \leq C\epsilon^{3/4} \int_{\mathbb{R}} |\psi(\zeta)| d\zeta \leq C\epsilon^{3/4},
 \end{align*}
 and 
 \begin{align*}
  |v^\epsilon_t(L,t)| \leq C \int_{\mathbb{R}}|\hat\sigma_T(\xi)|\frac{|\delta(\xi)|^2}{|\xi|}  d\xi 
  \leq C\epsilon^{3/4} \int_{\mathbb{R}} |\psi(\zeta)| d\zeta \leq C\epsilon^{3/4}.
 \end{align*}
Thus $$ \|v^\epsilon(L,\cdot)\|_{H^1(0,T)}\leq C\epsilon^{3/4}.$$
 Similarly, we can show  
\begin{align*}
 \|\phi^\epsilon(L,\cdot)\|_{H^1(0,T)} \leq C \epsilon^{7/4}.
\end{align*}
\end{proof}

\subsection{Proof Of Main Theorems.}
Now we will use the above construction to prove Theorem \ref{thm1}. First we prove the following theorem. Theorem \ref{thm1} will be a 
direct consequence of this theorem.


\begin{thm} \label{thm5}
Let $\mathcal{O}_1$ be a proper subset of $(0,L)$ and $\mathcal{O}_2 \subseteq (0,L), \mathcal{O}_3 \subseteq (0,L).$ 
Then there exists a terminal condition $(\sigma_T, v_T, \phi_T) \in Z,$ such that the solution of \eqref{eq:adjoint1}, corresponding to this terminal condition, satisfy the following estimates
\begin{itemize}
 \item [(i)] 
 \begin{equation*}
  \|\sigma(\cdot,0)\|_{L^2(0,L)} \geq \frac{1}{8\pi} e^{-2a_1T},
 \end{equation*}
 and 
\item[(ii)]
\begin{equation}
\|\sigma\|_{L^2(0,T;L^2(\mathcal{O}_1))}^2 + \| v \|_{L^2(0,T;L^2(\mathcal{O}_2))}^2  + \| \phi\|_{L^2(0,T;L^2(\mathcal{O}_3))}^2 
\leq C \sqrt{\epsilon}.
\end{equation}
\end{itemize}
\end{thm}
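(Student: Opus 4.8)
The plan is to transplant the whole-line Gaussian beam of Theorem \ref{thm4} into the bounded interval $(0,L)$, and to show that the boundary mismatch it produces is negligible, so that the localization and smallness already established on the line survive in the bounded problem. Since $\mathcal{O}_1$ is a proper subset of $(0,L)$, I would first fix $x_0 \in (0,L)$ and a radius $\eta > 0$ such that the interval $\{|x-x_0|\le \eta\}$ is contained in $(0,L)$ and disjoint from $\mathcal{O}_1$; this is possible because the complement of $\overline{\mathcal{O}_1}$ in $(0,L)$ contains an open interval. With this $x_0$, take the family $(\sigma^\epsilon, v^\epsilon, \phi^\epsilon)$ of Theorem \ref{thm4} solving the whole-line adjoint system \eqref{eq:adjoint2}, and define the terminal condition for the bounded problem by restriction,
\[
(\sigma_T, v_T, \phi_T) := (\sigma_T^\epsilon, v_T^\epsilon, \phi_T^\epsilon)\big|_{(0,L)} .
\]
By Lemma \ref{lem:improved-regularity} these traces lie in $H^1(\mathbb{R})$, so their restrictions belong to $Z$. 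Let $(\sigma, v, \phi)$ denote the solution of the bounded-domain adjoint system \eqref{eq:adjoint1} with this terminal data and the homogeneous boundary conditions $v(0,\cdot)=v(L,\cdot)=0$, $\phi(0,\cdot)=\phi(L,\cdot)=0$.

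Next I introduce the corrector
\[
(w_\sigma, w_v, w_\phi) := (\sigma, v, \phi) - (\sigma^\epsilon, v^\epsilon, \phi^\epsilon)\big|_{(0,L)} .
\]
Both triples solve the same adjoint PDE in $(0,L)$ and agree at time $T$, so $(w_\sigma,w_v,w_\phi)$ solves \eqref{eq:adjoint1} with zero terminal condition and the inhomogeneous boundary data $w_v = -v^\epsilon$ and $w_\phi = -\phi^\epsilon$ at $x\in\{0,L\}$. To estimate it I would lift these boundary values by the explicit affine-in-$x$ functions $V(x,t) = -v^\epsilon(0,t)\frac{L-x}{L} - v^\epsilon(L,t)\frac{x}{L}$ and an analogous $\Phi$ built from the traces of $\phi^\epsilon$, then perform a weighted energy estimate in the $Z$-norm on $(w_\sigma, w_v - V, w_\phi - \Phi)$: multiplying the continuity, momentum and temperature equations by $R\bt\, w_\sigma$, $\br^2(w_v - V)$ and $\frac{\br^2 c_v}{\bt}(w_\phi - \Phi)$ respectively and integrating over $(0,L)$, the first–order coupling terms $\sigma_x v$ and $v_x \sigma$ (and likewise $v_x\phi,\phi_x v$) combine into total $x$-derivatives whose boundary contributions vanish, because the shifted unknowns satisfy homogeneous Dirichlet conditions, while $\nu_0$ and $k_0$ supply dissipation. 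The lifting only generates source terms containing $V,\Phi$ and their first derivatives, which by Lemma \ref{rem:trace} obey $\|v^\epsilon(0,\cdot)\|_{H^1(0,T)} + \|v^\epsilon(L,\cdot)\|_{H^1(0,T)}\le C\epsilon^{3/4}$ and the sharper $C\epsilon^{7/4}$ for $\phi^\epsilon$. A Gronwall argument run backwards from $t=T$ (where the energy vanishes) then yields
\[
\|w_\sigma\|_{C([0,T];L^2(0,L))} + \|w_v\|_{L^2(0,T;L^2(0,L))} + \|w_\phi\|_{L^2(0,T;L^2(0,L))} \le C\epsilon^{3/4}.
\]

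Finally I would conclude by the triangle inequality. For (i), estimate \eqref{eq:sigmaestimate} together with $\{|x-x_0|\le\eta\}\subset(0,L)$ gives $\|\sigma^\epsilon(\cdot,0)\|_{L^2(0,L)}\ge \frac{1}{2\sqrt{\pi}}e^{-a_1 T}$, so subtracting the $O(\epsilon^{3/4})$ corrector yields $\|\sigma(\cdot,0)\|_{L^2(0,L)}\ge \frac{1}{8\pi}e^{-2a_1 T}$ once $\epsilon$ is small. For (ii), on $\mathcal{O}_1$ the beam part is $\le C\sqrt{\epsilon}$ by Theorem \ref{thm4}(ii) (as $\mathcal{O}_1$ lies at distance $\ge\eta$ from $x_0$), on $\mathcal{O}_2$ and $\mathcal{O}_3$ the beam part is globally small by Theorem \ref{thm4}(iii), and in every case the corrector contributes at most $C\epsilon^{3/2}$, so the whole left-hand side is $\le C\sqrt{\epsilon}$.

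The step I expect to be the main obstacle is the corrector estimate. The continuity equation for $w_\sigma$ is purely transport and carries no dissipation, so its $L^2$ control must be extracted through the coupling with $w_v$; one must verify that the weighted energy identity truly closes, i.e. that after the lifting the boundary terms arising from integrating the $\sigma_x v$, $v_x\sigma$, $v_x\phi$ and $\phi_x v$ couplings by parts all cancel, and that the residual source terms are precisely the small quantities controlled by Lemma \ref{rem:trace}, so that the Gronwall constant does not depend on $\epsilon$.
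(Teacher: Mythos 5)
Your proposal is correct and follows essentially the same route as the paper: restrict the whole-line Gaussian beam of Theorem \ref{thm4} to $(0,L)$, add a corrector solving the adjoint system with zero terminal data and the opposite boundary traces (made small via Lemma \ref{lem:improved-regularity} and Lemma \ref{rem:trace}), and conclude with the triangle inequality. The only difference is one of detail: the paper bounds the corrector simply by invoking well-posedness of the inhomogeneous boundary-value problem with $H^1$ boundary data, whereas you sketch that bound explicitly by an affine lifting plus weighted energy/Gronwall argument, which supplies the justification the paper leaves implicit.
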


\begin{proof}
As $\mathcal{O}_1$ is a proper subset of $(0,L),$
we choose
$x_0 $ and $\eta>0  $ such that 
$$\{x : |x - x_0| \leq \eta \} \subset \ip \mbox{ and does not intersect  } \mathcal{O}_1. $$ 
Let us choose $\hat\sigma_T^\epsilon(\xi)$ is as in Theorem \ref{thm4} with the above choice $x_0.$
Let $(\sigma_T^\epsilon, v^\epsilon_T, \phi_T^\epsilon)$ and $(\sigma^\epsilon, v^\epsilon, \phi^\epsilon)$ are as in \eqref{eq:adjoint_condn}
and \eqref{eq:representation} respectively. 

Let us  set
\begin{align}
 &q^\epsilon_0(t) = v^\epsilon(0,t) , \qquad q^\epsilon_L(t) = v^\epsilon(L,t), \notag \\
 &r^\epsilon_0(t) = \phi^\epsilon(0,t) , \qquad r^\epsilon_L(t) = \phi^\epsilon(L,t). 
\end{align}
Let $(\breve \sigma^\epsilon(x,t),\breve v^\epsilon (x,t) ,\breve \phi^\epsilon (x,t))$ be the restriction of 
$(\sigma^\epsilon(x,t), v^\epsilon(x,t),\phi^\epsilon(x,t))$ to $(0,L) \times (0,T)$ and 
$(\breve\sigma^\epsilon_T,\breve v^\epsilon_T, \breve\phi^\epsilon_T)$ be the restriction of 
$(\sigma^\epsilon_T, v^\epsilon_T, \phi^\epsilon_T)$ to $(0,L).$

Then  $\breve\sigma^\epsilon(x,t)$, $\breve v^\epsilon(x,t)$ and
$\breve\phi^\epsilon(x,t)$ satisfy the following system, 
\begin{equation}  
 \begin{array}{lll}
-  \breve \sigma_t^\epsilon (x,t) - \br \;\breve v^\epsilon_x \;\; = 0, \mbox{ in }  \ip \times (0,T),\\ [2.mm]
\displaystyle
 -  \breve v_t^\epsilon - \nu_0 \breve v^\epsilon_{xx}  - \frac{R \bt}{\br}\;\breve \sigma^\epsilon_x  - 
 R \breve \phi^\epsilon_x  \; = 0,  \mbox{ in }  \ip \times (0,T),\\[3.mm]
\displaystyle
 - \breve \phi_t^\epsilon - k_0 \breve \phi^\epsilon_{xx} - \frac{R \bt}{c_v} \breve v^\epsilon_x \; = 0,  \mbox{ in }  \ip \times (0,T),
\\ [2.mm] \displaystyle
  \breve \sigma^\epsilon(T) = \breve\sigma^\epsilon_T,\quad \breve v^\epsilon(T) = \breve v^\epsilon_T ,\quad 
  \breve \phi^\epsilon(T) = \breve \phi^\epsilon_T,
  \\ [2.mm]
  \breve v^\epsilon(0,t) = q^\epsilon_0(t), \quad  \breve v^\epsilon(L,t) = q^\epsilon_L(t) \quad \forall \ \ t > 0,
 \\ [2.mm]
 \breve\phi^\epsilon(0,t) = r^\epsilon_0(t), \quad  \breve\phi^\epsilon(L,t) = r^\epsilon_L(t) \quad \forall \ \ t > 0.
\end{array}
\end{equation}
Note that by Lemma \ref{lem:improved-regularity} and Lemma \ref{rem:trace}, we have 
$(\breve\sigma^\epsilon_T,\breve v^\epsilon_T, \breve\phi^\epsilon_T) \in (H^1(0,L))^3$ and 
$(q_0^\epsilon,q_L^\epsilon,r_0^\epsilon,r_L^\epsilon) \in (H^1(0,T))^4.$ The above system has a unique solution
$(\breve\sigma^\epsilon, \breve v^\epsilon, \breve\phi^\epsilon)$ with 
$(\breve\sigma^\epsilon, \breve v^\epsilon, \breve\phi^\epsilon) \in C([0,T];Z).$ They satisfy the following estimates:
\begin{itemize}
 \item[(i)] From \eqref{eq:sigmaestimate}, we have 
 \begin{equation}
  \|\breve\sigma^\epsilon(\cdot,0)\|_{L^2(0,L)} \geq \|\breve\sigma^\epsilon(\cdot,0)\|_{L^2(|x - x_0|\leq \eta )}
  \geq \frac{1}{4\pi}e^{-2 a_1 T}.
 \end{equation}
\item[(ii)] From Theorem \ref{thm4}, 
\begin{align}
 &\|\breve \sigma^\epsilon\|^2_{L^2(0,T;L^2(\mathcal{O}_1))} \leq \|\breve\sigma^\epsilon\|^2_{L^2(0,T;L^2(|x-x_0|\geq \eta)}
 \leq C\sqrt{\epsilon}, \notag \\
  &\|\breve v^\epsilon\|^2_{L^2(0,T;L^2(\mathcal{O}_2))} \leq \|v^\epsilon\|^2_{L^2(0,T;L^2(\mathbb{R}))} \leq C 
  \epsilon^{2}, \notag \\
&\|\breve \phi^\epsilon\|^2_{L^2(0,T;L^2(\mathcal{O}_3))} \leq C \epsilon^{4}.
 \end{align}
\item[(iii)] From Lemma \ref{rem:trace},
\begin{equation} \label{eq:traceestimate}
\|q_0^\epsilon\|_{H^1(0,T)} \leq C \epsilon^{3/4}, \ \|q_1^\epsilon\|_{H^1(0,T)} \leq C \epsilon^{3/4},  
\|r_0^\epsilon\|_{H^1(0,T)} \leq C \epsilon^{7/4}, \ \|r_1^\epsilon\|_{H^1(0,T)} \leq C \epsilon^{7/4}.
\end{equation}
\end{itemize}

Let $\widetilde \sigma^\epsilon (x,t)$, $\widetilde v^\epsilon(x,t)$ and $\widetilde \phi^\epsilon(x,t)$ satisfy the following
system 
\begin{equation}  
 \begin{array}{lll}
-  \widetilde\sigma_t^\epsilon - \br \;\widetilde v^\epsilon_x \;\; = 0,  \mbox{ in }  \ip \times (0,T),\\ [2.mm]
\displaystyle
 - \widetilde v_t^\epsilon (x,t)- \nu_0 \widetilde v^\epsilon_{xx}  - \frac{R \bt}{\br}\;\widetilde\sigma^\epsilon_x  - 
 R \widetilde\phi^\epsilon_x  \; = 0,  \mbox{ in }  \ip \times (0,T), \\[3.mm]
\displaystyle
 -  \widetilde\phi_t^\epsilon  - k_0 \widetilde\phi^\epsilon_{xx} - \frac{R \bt}{c_v} \widetilde v^\epsilon_x \; = 0, 
 \mbox{ in }  \ip \times (0,T),
\\ [2.mm] \displaystyle
  \widetilde \sigma^\epsilon(T) = 0,\quad \widetilde v^\epsilon(T) = 0 ,\quad 
  \widetilde \phi^\epsilon(T) = 0,  \mbox{ in }  \ip,
  \\ [2.mm]
  \widetilde v^\epsilon(0,t) = - q^\epsilon_0(t), \quad  \widetilde v^\epsilon(L,t) = - q^\epsilon_L(t) \quad \forall \ \ t > 0,
 \\ [2.mm]
 \widetilde \phi^\epsilon(0,t) =  - r^\epsilon_0(t), \quad  \widetilde \phi^\epsilon(L,t) = - r^\epsilon_L(t) \quad \forall \ \ t > 0.
\end{array}
\end{equation}
As $(q_0^\epsilon,q_L^\epsilon,r_0^\epsilon,r_L^\epsilon) \in (H^1(0,T))^4,$ the above system has a unique solution 
$(\widetilde \sigma^\epsilon, \widetilde v^\epsilon, \widetilde \phi^\epsilon)^T
\in C([0,T];Z).$ Using \eqref{eq:traceestimate}, we  obtain 
\begin{align}
 \|(\widetilde \sigma^\epsilon, \widetilde v^\epsilon, \widetilde \phi^\epsilon)^T\|^2_{L^2(0,T;Z)} 
 &\leq C
 (\|q^\epsilon_0\|_{L^2(0,T)} + \|q^\epsilon_L\|_{L^2(0,T)} + \|r^\epsilon_0\|_{L^2(0,T)} + \|r^\epsilon_L\|_{L^2(0,T)}) \notag \\
& \leq C \epsilon ^{3/4}. 
 \end{align}
 and 
\begin{align}
 \|\widetilde \sigma^\epsilon(\cdot,0)\|_{L^2(0,L)} \leq C \|\widetilde \sigma^\epsilon \|_{C([0,T)];L^2(0,L))} \leq C \epsilon^{3/4}.
\end{align}

Let us set
\begin{align}
  \sigma(x,t) = \breve\sigma^\epsilon(x,t) +  \widetilde \sigma^\epsilon(x,t), \notag \\
  v(x,t) = \breve v^\epsilon(x,t) +  \widetilde v^\epsilon(x,t), \notag \\
 \phi(x,t) = \breve \phi^\epsilon(x,t) +  \widetilde \phi^\epsilon(x,t). 
\end{align}
Then, $(\sigma, v,\phi)$ satisfy the system \eqref{eq:adjoint1} and the estimate holds.  
\end{proof}

\noindent Using the above theorem now we can prove Theorem \ref{thm1}.

{\bf Proof of Theorem \ref{thm1}.}
\begin{proof}
Theorem \ref{thm5} shows, in the observability inequality 
\eqref{eq:observability_ineq_1}, L.H.S $> \displaystyle \frac{1}{8\pi}e^{-2a_1T}$ and R.H.S $\leq C\sqrt{\epsilon}.$ Hence \eqref{eq:observability_ineq_1}
cannot hold. Hence system 
 \eqref{eq:linearized} - \eqref{eq:ini+bdy}
 is not null controllable. 
\end{proof}

\begin{remark} \label{rem:lessRegControl}
 The above result is established when $f\in L^2(0,T;L^2(\mathcal{O}_1))$, $g\in L^2(0,T;L^2(\mathcal{O}_2))$ and 
 $h\in L^2(0,T;L^2(\mathcal{O}_3))$. We can extend this negative result to less regular control. More precisely we can take 
 $g \in L^2(0,T;H^{-1}(\mathcal{O}_2))$ and $h \in L^2(0,T;H^{-1}(\mathcal{O}_3)).$ In order to give a 
 sense to the R.H.S. of \eqref{eq:identity_0}, we need to replace 
 $\chi_{\mathcal{O}_2}$ and $\chi_{\mathcal{O}_3}$ in \eqref{eq:linearized} by $\Psi_2 \in C^\infty_c(\mathcal{O}_2)$ and 
 $\Psi_3 \in C^\infty_c(\mathcal{O}_3)$
 respectively. The observability inequality becomes
 \begin{align} \label{eq:observability_ineq_4}
   &\|\sigma(\cdot,0)\|^2_{L^2(0,L)} + \|v(\cdot,0)\|^2_{L^2(0,L)}  + \|\phi(\cdot,0)\|^2_{L^2(0,L)}  
  \notag \\
  &\leq C \left ( \int_0^T \int_{{\mathcal O}_1} \sigma^2 dx dt + \int_0^T \int_{{\mathcal O}_2} (\Psi_2v)_x^2 dx dt
  + \int_0^T \int_{{\mathcal O}_3} (\Psi_3\phi)_x^2 dx dt \right )
\end{align}
where $(\sigma,v,\phi)$ is the solution of the adjoint system \eqref{eq:adjoint1}.
 Using the same construction as above one can prove that this  observability inequality does not hold and hence the system is not
 null controllable. 
 \end{remark}

\begin{remark} \label{rem:bdyControl}
One can use the above Gaussian Beam construction to rule out null controllability using a boundary control. Let us consider the system
\eqref{eq:linearized} - \eqref{eq:ini+bdy} with $f = g =h = 0$ and $v(L,t) = q(t) \in L^2(0,T).$ In this case null controllability 
is equivalent to the following observability inequality
\begin{align} \label{eq:observability_ineq_bdy}
  \|\sigma(\cdot,0)\|_{L^2(0,L)}^2 + \|v(\cdot,0)\|_{L^2(0,L)}^2 + \|\phi(\cdot,0)\|_{L^2(0,L)}^2 \leq C 
  \int_0^T |R \br\bt \sigma(L,t) + \br^2\nu_0 v_x(L,t)|^2 \ dt.
 \end{align}
where $(\sigma,v,\phi)$ is the solution of the adjoint system \eqref{eq:adjoint1}. We can use the above construction to show that 
the above observability 
inequality does not hold and hence the system is not null controllable by a boundary control in any time $T > 0$.
 \end{remark}

 We proved that the system \eqref{eq:linearized} - \eqref{eq:ini+bdy} is not null controllable when initial condition
lies in $Z$. So the natural question is if the system is null controllable or not when the initial conditions are regular.
In case of barotropic fluid, if initial density lies in $H^1_m(0,L)$ then the system is null controllable
(See \cite{CRR} and \cite{CMRR2}). In this section we choose $(\rho_0,u_0,\theta_0) \in H^1_m(0,L) \times L^2(0,L) \times L^2(0,L)$. We will
first show that \eqref{eq:linearized} - \eqref{eq:ini+bdy} is null controllable by velocity and temperature control only 
(i.e. when $f \equiv 0$) acting everywhere 
in the domain with this initial regular condition. Then we will show that we cannot achieve null 
controllability by localizing velocity and temperature control.

We consider the following interior control system 
\begin{equation} \label{eq:interior_control_1}
 \begin{array}{lll}
 \rho_t  + \br \;u_x \;\; = 0, \mbox{ in } (0,L) \times (0,T), \\ [2.mm]
\displaystyle
 u_t - \nu_0 u_{xx}  + \frac{R \bt}{\br}\;\rho_x  + R \theta_x  \; = g \chi_{\mathcal{O}_2}, \mbox{ in } (0,L) \times (0,T), \\[3.mm]
\displaystyle
 \theta_t  - k_0 \theta_{xx} + \frac{R \bt}{c_v} u_x \; = h \chi_{\mathcal{O}_3},\ \mbox{ in } (0,L) \times (0,T),
\\ [2.mm] \displaystyle
  \rho(0) = \rho_0,\quad u(0) = u_0 ,\quad \theta(0) = \theta_0, \mbox{ in } (0,L) 
  \\ [2.mm]
  u(0,t) = 0 = u(L,t) \quad \forall \ \ t > 0, \quad \theta(0,t) = 0 = \theta(L,t) \quad \forall \ \ t > 0.
\end{array}
 \end{equation}
Here $g$ and $h$ are velocity and temperature control respectively. Let us first explain why we need average zero condition for initial density.
Integrating the density equation of \eqref{eq:interior_control_1} in $(0,L)$ and using the boundary conditions we deduce that
\begin{equation*}
\frac{d}{dt} \left(\int _0 ^L \rho(x,t) dx  \right) = 0.
\end{equation*}
Therefore
\begin{equation}
 \int_0^L \rho(x,T) dx =   \int_0^L \rho_0(x) dx.
\end{equation}
Thus if the system \eqref{eq:interior_control_1} is null controllable in time $T > 0$
  then necessarily
  \begin{equation}
  \int_0^L \rho_0(x) dx = 0.
\end{equation}

Let us define $$ V = H^1_m(0,L) \times L^2(0,L) \times L^2(0,L) .$$
We have the following lemma about existence and uniqueness of solution to the system \eqref{eq:interior_control_1} follows easily from semigroup
theory. 
\begin{lem}
 Given $(\rho_0,u_0,\theta_0) \in V,$ $g \in L^2(0,T;L^2(0,L))$  and  $h \in L^2(0,T;L^2(0,L)),$ the 
 system \eqref{eq:interior_control_1} has 
 a unique solution  $(\rho,u,\theta) $ with $\rho \in L^2(0,T;H^1_m(0,L))$ and $(u,\theta) \in (L^2(0,T;H^1_0(0,L)))^2.$  Moreover
 $(\rho,u,\theta)$ belongs to $C([0,T];V).$
\end{lem}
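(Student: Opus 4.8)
The plan is to recast the homogeneous part of \eqref{eq:interior_control_1} as an abstract Cauchy problem $U'(t)=\mathcal{A}U(t)$, where $U=(\rho,u,\theta)$ and
\[
 \mathcal{A}\begin{pmatrix}\rho\\ u\\ \theta\end{pmatrix}
 =\begin{pmatrix} -\br\,u_x\\[1mm] \nu_0\,u_{xx}-\frac{R\bt}{\br}\rho_x-R\theta_x\\[1mm] k_0\,\theta_{xx}-\frac{R\bt}{c_v}u_x\end{pmatrix},
\]
taken on $Z$ with its maximal domain $D(\mathcal{A})=\{U\in Z:\mathcal{A}U\in Z,\ u(0)=u(L)=\theta(0)=\theta(L)=0\}$, and to absorb the forcing $F=(0,g\chi_{\mathcal{O}_2},h\chi_{\mathcal{O}_3})\in L^2(0,T;Z)$ by Duhamel's formula. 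First I would check that $\mathcal{A}$ is dissipative for the inner product of $Z$. This is exactly why that inner product was introduced: integrating by parts and using the Dirichlet conditions, all the first-order cross terms between $\rho$ and $u$, and between $u$ and $\theta$, cancel in pairs, leaving
\[
 \left\langle\mathcal{A}U,U\right\rangle_Z=-\br^2\nu_0\,\|u_x\|_{L^2(0,L)}^2-\frac{\br^2c_v}{\bt}k_0\,\|\theta_x\|_{L^2(0,L)}^2\le 0 .
\]

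For the range condition I would solve $(\lambda I-\mathcal{A})U=G$ for $\lambda>0$ large. Eliminating the density algebraically from the first line, $\rho=(G_1-\br u_x)/\lambda$, the velocity line becomes $\lambda u-(\nu_0+\frac{R\bt}{\lambda})u_{xx}+R\theta_x=G_2-\frac{R\bt}{\br\lambda}\partial_x G_1$, which together with the temperature line forms a coercive elliptic system for $(u,\theta)$; a variational (Lax--Milgram) argument in $(H^1_0(0,L))^2$ solves it, and one checks a posteriori that the combination $\nu_0u_{xx}-\frac{R\bt}{\br}\rho_x=\lambda u+R\theta_x-G_2$ lies in $L^2$, so that $U\in D(\mathcal{A})$. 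Hence $\mathcal{A}$ is $m$-dissipative and generates a contraction semigroup on $Z$; Duhamel's formula then yields a unique mild solution $(\rho,u,\theta)\in C([0,T];Z)$. Integrating the first equation in $x$ shows $\frac{d}{dt}\int_0^L\rho\,dx=0$, so the mean-zero constraint is preserved and for data in $V$ one has $\int_0^L\rho(\cdot,t)\,dx=0$ for all $t$.

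The delicate point, and the one I expect to be the main obstacle, is to upgrade this $Z$-solution to $V$, i.e.\ to gain one derivative on $\rho$, because the continuity equation $\rho_t=-\br u_x$ is not regularizing. I would prove the corresponding a priori bound on smooth data and pass to the limit by density. Differentiating the continuity equation in $x$ and testing with $\rho_x$ gives $\frac12\frac{d}{dt}\|\rho_x\|^2=-\br\int_0^L u_{xx}\rho_x$, whose right-hand side is indefinite and cannot be absorbed by the dissipation $-\|u_x\|^2$ at hand. The decisive observation is that testing the velocity equation against $-u_{xx}$ produces
\[
 \tfrac12\tfrac{d}{dt}\|u_x\|^2+\nu_0\|u_{xx}\|^2=\tfrac{R\bt}{\br}\int_0^L\rho_x u_{xx}+R\int_0^L\theta_x u_{xx}-\int_0^L g\chi_{\mathcal{O}_2}u_{xx},
\]
so that weighting the $\rho_x$-identity by $R\bt/\br^2$ and adding makes the two $\int\rho_x u_{xx}$ contributions cancel \emph{exactly}. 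The surviving right-hand side is bounded by $\frac{\nu_0}{2}\|u_{xx}\|^2+C(\|\theta_x\|^2+\|g\|^2)$, and together with the elementary $L^2$ estimates obtained by testing the velocity and temperature equations with $u$ and $\theta$, this closes a Gronwall inequality.

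From the resulting bound I would read off $\rho\in L^\infty(0,T;H^1_m)$, $u\in L^\infty(0,T;H^1_0)\cap L^2(0,T;H^2)$ and $\theta\in L^2(0,T;H^1_0)$; in particular $\rho\in L^2(0,T;H^1_m)$ and $u,\theta\in L^2(0,T;H^1_0)$. Continuity $\rho\in C([0,T];H^1_m)$ follows from the representation $\rho_x(\cdot,t)=\partial_x\rho_0-\br\int_0^t u_{xx}\,ds$ with $u_{xx}\in L^1(0,T;L^2)$, while $u,\theta\in C([0,T];L^2)$ is the standard parabolic continuity, so altogether $(\rho,u,\theta)\in C([0,T];V)$. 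Uniqueness needs nothing new: the difference of two solutions solves the homogeneous system with zero data, and the dissipativity identity above forces it to vanish. Everything outside the weighted cancellation in the $\rho_x$-estimate is routine semigroup theory and parabolic energy estimates.
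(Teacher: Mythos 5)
Your $Z$-level construction (dissipativity in the weighted inner product, the Lax--Milgram range argument, Duhamel) is sound, and it is essentially what the paper's unproved appeal to ``semigroup theory'' covers. The genuine gap is in the step you yourself call delicate: the upgrade from $Z$ to $V$. Your weighted cancellation controls the quantity $\tfrac12\|u_x(t)\|^2_{L^2}+\tfrac{R\bt}{2\br^2}\|\rho_x(t)\|^2_{L^2}$, so the Gronwall inequality you close bounds $\|\rho_x(t)\|_{L^2}$ in terms of $\|\partial_x u_0\|_{L^2}$; that is, it requires $u_0\in H^1_0(0,L)$. But the lemma assumes only $(\rho_0,u_0,\theta_0)\in V=H^1_m(0,L)\times L^2(0,L)\times L^2(0,L)$, so $\|\partial_x u_0\|_{L^2}$ may be infinite. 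The density argument you invoke cannot repair this: approximating $u_0\in L^2$ by smooth $u_0^n$ forces $\|\partial_x u_0^n\|_{L^2}\to\infty$ in general, and your a priori bound degenerates accordingly, so there is no uniform estimate to pass to the limit. Starting the estimate at a positive time $\tau$ and letting $\tau\to 0$ also fails: the smoothing rate $\|u_x(\tau)\|\lesssim\tau^{-1/2}$ is not integrable against your estimate and is itself contingent on controlling $\rho_x$ near $t=0$, which is what you are trying to prove. As written, your argument proves well-posedness on $H^1_m\times H^1_0\times L^2$, a strictly smaller space than $V$.

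The missing mechanism is that the \emph{time-integrated} parabolic flow gains two derivatives, and this can be read off from the equation itself with no $H^1$ hypothesis on $u_0$. From the continuity equation, $\rho_x(t)=(\rho_0)_x-\br\int_0^t u_{xx}\,ds$, while integrating the velocity equation in time gives
\[
\nu_0\int_0^t u_{xx}\,ds \;=\; u(t)-u_0+\int_0^t\Bigl(R\,\theta_x+\frac{R\bt}{\br}\,\rho_x-g\,\chi_{\mathcal{O}_2}\Bigr)\,ds ,
\]
whose right-hand side lies in $L^2(0,L)$ using only $u\in C([0,T];L^2)$, $\theta\in L^2(0,T;H^1_0)$ (both available from the $Z$-theory with $L^2$ data) and the $H^1$ bound on $\rho$ being constructed. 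Substituting, one gets, first for smooth data,
\[
\|\rho_x(t)\|_{L^2}\;\le\; A+C\int_0^t\|\rho_x(s)\|_{L^2}\,ds ,
\]
where $A$ depends only on $\|\rho_0\|_{H^1}$, $\|u_0\|_{L^2}$, $\|\theta_0\|_{L^2}$, $\|g\|_{L^2(0,T;L^2)}$, $\|h\|_{L^2(0,T;L^2)}$, i.e.\ only on the $V$-norm of the data and the controls. Gronwall then yields a bound uniform in the $V$-norm, and \emph{now} your density argument (or, equivalently, a fixed-point scheme in $C([0,T];V)$: given $\rho$, solve the parabolic system for $(u,\theta)$, then recover $\rho$ from the identity above) closes the proof of the lemma on all of $V$, including the continuity statement $\rho\in C([0,T];H^1_m)$. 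In short: replace the differentiated energy identity for $u$ by the time-integrated velocity equation; everything else in your proposal can stay.
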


%

{\bf Proof of Theorem \ref{thm2}.}
\begin{proof}
 Let us first take the following system with interior control $\tilde g$
 \begin{equation} \label{eq:thm7_1}
 \begin{array}{lll}
 \rho_t  + \br \;u_x \;\; = 0, \mbox{ in } (0,L) \times (0,T),\\ [2.mm]
\displaystyle
 u_t  - \nu_0 u_{xx}  + \frac{R \bt}{\br}\;\rho_x   \; = \tilde g, \mbox{ in } (0,L) \times (0,T), \\[3.mm]
 \displaystyle
  \rho(0) = \rho_0,\quad u(0) = u_0 , \mbox{ in } (0,L),
  \\ [2.mm]
  u(0,t) = 0 = u(L,t) \quad \forall \ \ t > 0.
\end{array}
 \end{equation}
 By Theorem 5.1 of \cite{CRR}, we know that for every $(\rho_0,u_0) \in H^1_m(0,L) \times L^2(0,L),$ there exists a control
 $\tilde g \in L^2(0,T;L^2(0,L)),$
 such that the solution of \eqref{eq:thm7_1} satisfies
 \begin{equation}
  (\rho,u)(x,T) = 0 , \mbox{ for all } x \in \ip. 
 \end{equation}
Now we consider the following heat equation
\begin{equation} \label{eq:thm7_2}
 \begin{array}{lll}
\displaystyle
 \theta_t  - k_0 \theta_{xx}  = \tilde h(x,t) , \mbox{ in } (0,L) \times (0,T),
\\ [2.mm] \displaystyle
 \theta(x,0) = \theta_0(x) , x \in \ip, \ \ 
 \theta(0,t) = 0 = \theta(L,t) \quad \forall \ \ t > 0.
\end{array}
 \end{equation}
 By Theorem 2.66 of \cite{CORON},  for every $\theta_0 \in L^2(0,L)$ 
 there exists a control $\tilde h \in L^2(0,T;L^2(0,L))$ such that the solution of \eqref{eq:thm7_2} satisfies 
 \[
  \theta(x,T) = 0 \mbox{ for all } x\in \ip.
 \]
Now define 
\[
\displaystyle
 g(x,t) = \tilde g(x,t) + R \theta_x(x,t) , \quad h(x,t) = \tilde h(x,t) + \frac{R\bt}{c_v} u_x(x,t),
\]
where $u$, $\theta$ are the solutions of \eqref{eq:thm7_1} and \eqref{eq:thm7_2} respectively. Thus $(\rho,u, \theta)$ is the solution 
of the system  \eqref{eq:interior_control_1} with $g$ and $h$ defined as above and it satisfies
$$ (\rho,u,\theta)(x,T) = 0. $$ Therefore the Theorem follows.
\end{proof}

Now we want to show that even if $(\rho_0, u_0, \theta_0) \in V,$ null controllability cannot be achieved by localized 
velocity and temperature controls, i.e., we want to prove Theorem \ref{thm3}.
First we will derive an observability inequality. In order to do this we introduce, 
\begin{equation}
 \alpha (x,t) = \rho_x(x,t).
\end{equation}
Differentiating the first equation of $\eqref{eq:interior_control_1},$ we obtain the following system 
\begin{equation} \label{eq:interior_control_2}
 \begin{array}{lll}
 \alpha_t  + \br \;u_{xx} \;\; = 0, \mbox{ in } (0,L) \times (0,T), \\ [2.mm]
\displaystyle
u_t - \nu_0 u_{xx}  + \frac{R \bt}{\br}\;\alpha  + R \theta_x  \; = g \chi_{\mathcal{O}_2}, \mbox{ in } (0,L) \times (0,T), \\[3.mm]
\displaystyle
\theta_t (x,t) - k_0 \theta_{xx} + \frac{R \bt}{c_v} u_x \; = h \chi_{\mathcal{O}_3}, \mbox{ in } (0,L) \times (0,T),
\\ [2.mm] \displaystyle
  \alpha(0) = \alpha_0 := (\rho_0)_x,\quad u(0) = u_0 ,\quad \theta(0) = \theta_0, \mbox{ in } (0,L),
  \\ [2.mm]
  u(0,t) = 0 = u(L,t) \quad \forall \ \ t > 0, \quad 
 \theta(0,t) = 0 = \theta(L,t) \quad \forall \ \ t > 0.
\end{array}
 \end{equation}
Here $(\alpha_0,u_0,\theta_0) \in Z$. The system \eqref{eq:interior_control_2} is well posed in $Z$. Note that, to prove Theorem \ref{thm3}, it
is enough to show system \eqref{eq:interior_control_1} is not null controllable in $Z$ by localized interior controls $g$ and $h$. 
As before we have the following proposition,

\begin{prop}
 For every $(\alpha_0, u_0, \theta_0) \in Z,$ the system \eqref{eq:interior_control_2} is null controllable in time $T$ by localized interior
 controls $g \in L^2(0,T;L^2(\mathcal{O}_2))$ and $h \in L^2(0,T;L^2(\mathcal{O}_3))$ if and only if for every 
 $(\sigma_T, v_T, \phi_T) \in Z,$ the solution of the following adjoint system 
 \begin{equation}  \label{eq:adjoint5}
 \begin{array}{lll}
- \sigma_t  + \br \;v \;\; = 0, \mbox{ in } (0,L) \times (0,T),\\ [2.mm]
\displaystyle
-  v_t - \nu_0 v_{xx}  + \frac{R \bt}{\br}\;\sigma_{xx}  - R \phi_x  \; = 0, \mbox{ in } (0,L) \times (0,T), \\[3.mm]
\displaystyle
-  \phi_t  - k_0 \phi_{xx} - \frac{R \bt}{c_v} \phi_x \; = 0, \mbox{ in } (0,L) \times (0,T),
\\ [2.mm] \displaystyle
  \sigma(T) = \sigma_T ,\quad v(T) = v_T ,\quad \phi(T) = \phi_T, \mbox{ in } (0,L),
  \\ [2.mm]
  \sigma(0,t) = 0 = \sigma(L,t) \quad \forall \ \ t > 0, \quad 
  v(0,t) = 0 = v(L,t) \quad \forall \ \ t > 0,
 \\ [2.mm]
 \phi(0,t) = 0 = \phi(L,t) \quad \forall \ \ t > 0.
\end{array}
\end{equation}
satisfies
\begin{align}  \label{eq:observability_ineq_3}
   &\|\sigma(\cdot,0)\|^2_{L^2(0,L)} + \|v(\cdot,0)\|^2_{L^2(0,L)}  + \|\phi(\cdot,0)\|^2_{L^2(0,L)}  
  \notag \\
  &\leq C \left (  \int_0^T \int_{{\mathcal O}_2} v^2 dx dt
  + \int_0^T \int_{{\mathcal O}_3} \phi^2 dx dt \right ).
\end{align}
\end{prop}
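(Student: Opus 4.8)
The plan is to establish this equivalence by the classical duality method already used for the Proposition following \eqref{eq:identity_0}, now adapted to the differentiated system \eqref{eq:interior_control_2} and its adjoint \eqref{eq:adjoint5}. First I would work with smooth, compactly supported data and controls, and recover the general statement at the end by a density argument exactly as in the passage leading to \eqref{eq:identity_0}. The well-posedness of \eqref{eq:interior_control_2} and of \eqref{eq:adjoint5} in $Z$, which is stated, guarantees that all the traces and pairings below make sense.

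The heart of the computation is a duality pairing. I would take the $Z$-inner product of the three equations of \eqref{eq:interior_control_2} with $(\sigma,v,\phi)$, i.e. multiply the $\alpha$, $u$ and $\theta$ equations by $R\bt\,\sigma$, $\br^2 v$ and $\frac{\br^2 c_v}{\bt}\phi$ respectively, and integrate over $\ip\times(0,T)$. Then I integrate by parts in $t$ (producing the endpoint terms $[\,\cdot\,]_0^T$) and in $x$. The new feature compared with \eqref{eq:identity_0} is the coupling term $\br u_{xx}$ in the continuity equation: pairing it with $R\bt\,\sigma$ and integrating by parts \emph{twice} in $x$ turns it into $R\bt\,\br\int u\,\sigma_{xx}$, which is precisely the weighted form of the term $\frac{R\bt}{\br}\sigma_{xx}$ appearing in the second equation of \eqref{eq:adjoint5}. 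The zeroth-order coupling $\frac{R\bt}{\br}\alpha$ contributes directly, while the first-order couplings $R\theta_x$ and $\frac{R\bt}{c_v}u_x$ each require a single integration by parts and transfer to the corresponding first-order terms of the adjoint; the diffusion terms $\nu_0 u_{xx}$ and $k_0\theta_{xx}$ are treated exactly as in the earlier proposition. Using the adjoint equations \eqref{eq:adjoint5}, all the interior integrals cancel, leaving only the time-endpoint terms and the control terms $\br^2\int_0^T\!\int_{\mathcal{O}_2} g\,v$ and $\frac{\br^2 c_v}{\bt}\int_0^T\!\int_{\mathcal{O}_3} h\,\phi$.

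The key point to verify is that every spatial boundary contribution vanishes. The terms coming from $\nu_0 u_{xx}$, $k_0\theta_{xx}$ and from the first-order couplings vanish because $u,v,\theta,\phi$ all satisfy homogeneous Dirichlet conditions. The genuinely new boundary terms $R\bt\,\br\,[\,u_x\sigma-u\sigma_x\,]_{x=0}^{x=L}$ produced by the double integration by parts of $\br u_{xx}$ are exactly the reason the adjoint problem \eqref{eq:adjoint5} carries the \emph{extra} homogeneous Dirichlet condition $\sigma(0,t)=\sigma(L,t)=0$, which was absent in \eqref{eq:adjoint1}: with $u=0$ and $\sigma=0$ at both endpoints, these terms drop. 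I expect this to be the main technical obstacle, namely checking that \eqref{eq:adjoint5} is genuinely the $Z$-adjoint of \eqref{eq:interior_control_2} and that its boundary conditions are precisely those forcing all boundary integrals to vanish; everything else is a bookkeeping of integrations by parts.

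After the boundary terms are discarded and the terminal condition $(\alpha,u,\theta)(\cdot,T)=0$ is imposed, the identity reduces to
\[
\left\langle \begin{pmatrix}\alpha_0\\ u_0\\ \theta_0\end{pmatrix}, \begin{pmatrix}\sigma(\cdot,0)\\ v(\cdot,0)\\ \phi(\cdot,0)\end{pmatrix}\right\rangle_Z
+ \br^2\int_0^T\!\!\int_{\mathcal{O}_2} g\,v\,dx\,dt
+ \frac{\br^2 c_v}{\bt}\int_0^T\!\!\int_{\mathcal{O}_3} h\,\phi\,dx\,dt = 0,
\]
which characterizes the controls $g,h$ steering $(\alpha_0,u_0,\theta_0)$ to rest, in exact analogy with \eqref{eq:identity_1}. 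Finally I would invoke the standard functional-analytic duality (the argument in \cite{CORON}, Chapter 2, and \cite{SZ}, Section 2): null controllability in $Z$ is equivalent to the surjectivity of the control-to-state map, which by the closed-range theorem is equivalent to the a priori estimate \eqref{eq:observability_ineq_3} for the adjoint solutions, the observation being only the localized traces of $v$ on $\mathcal{O}_2$ and of $\phi$ on $\mathcal{O}_3$. This last step is routine and identical to the passage from \eqref{eq:identity_1} to \eqref{eq:observability_ineq_1}.
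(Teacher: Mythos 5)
Your proposal is correct and takes essentially the same route as the paper, which proves this proposition ``as before,'' i.e.\ by the same $Z$-weighted duality pairing of \eqref{eq:interior_control_2} with the adjoint solution, integration by parts in $t$ and (twice) in $x$ with the extra Dirichlet condition on $\sigma$ absorbing the new boundary terms, and the standard equivalence between the resulting identity and the observability inequality \eqref{eq:observability_ineq_3} via \cite{CORON} and \cite{SZ}. One small reconciliation: your computation yields $-\frac{R\bt}{c_v}v_x$ in the third adjoint equation, while \eqref{eq:adjoint5} as printed shows $-\frac{R\bt}{c_v}\phi_x$; this is a typo in the paper (the Fourier matrix $\bar A(\xi)$ confirms the coupling is through $v_x$), so your derivation matches the intended adjoint system.
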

The system \eqref{eq:adjoint5} is well posed in $Z$. 
Let us consider the adjoint problem in $\mathbb{R} \times (0,T)$
\begin{equation}  \label{eq:adjoint6}
 \begin{array}{lll}
- \sigma_t + \br \;v \;\; = 0,  \mbox{ in } \mathbb{R} \times (0,T),\\ [2.mm]
\displaystyle
-  v_t - \nu_0 v_{xx}  + \frac{R \bt}{\br}\;\sigma_{xx} - R \phi_x  \; = 0, \mbox{ in } \mathbb{R} \times (0,T), \\[3.mm]
\displaystyle
- \phi_t - k_0 \phi_{xx} - \frac{R \bt}{c_v} \phi_x \; = 0, \mbox{ in } \mathbb{R} \times (0,T),
\\ [2.mm] \displaystyle
  \sigma(T) = \sigma_T ,\quad v(T) = v_T ,\quad \phi(T) = \phi_T, \mbox{ in } \mathbb{R}.
\end{array}
\end{equation}
Applying the Fourier transform in \eqref{eq:adjoint6}, we obtain the following system of ODE
\begin{align}
&- \begin{pmatrix} \sh \\ \vh \\ \ph \end{pmatrix}_t = \bar A(\xi) \begin{pmatrix} \sh \\ \vh \\ \ph \end{pmatrix}, \notag \\
&(\sh,\vh,\ph)(\xi,T) = (\sh_T, \vh_T, \ph_T), 
\end{align}
where 
\begin{equation}
\displaystyle
 \bar A(\xi) = \begin{pmatrix} 
                        \displaystyle  0                       & - \br                & 0 \\
                        \displaystyle  \frac{R\bt}{\br}\xi^2  & - \nu_0 \xi^2                 & Ri\xi \\
                        \displaystyle   0                      &\displaystyle \frac{R \bt}{c_v} i \xi & -k_0\xi^2
                        \end{pmatrix}
\end{equation}
Eigenvalues of $\bar A(\xi)$ are the same as eigenvalues of $A(\xi)$. Let $-\delta(\xi)$ be the eigenvalue satisfying 
$\lim_{|\xi| \rightarrow \infty} \delta(\xi) = \omega_0.$
The eigenfunction of $\bar A(\xi)$ corresponding to $-\delta(\xi)$
is $\displaystyle\left(1, \frac{\delta(\xi)}{\br}, -i \xi d_\delta(\xi) \right),$ where $d_\delta(\xi)$ is as in Lemma \ref{lem:eigenfunction}. 
Thus if we choose 
$$(\sh_T, \vh_T, \ph_T)(\xi) = \sh_T(\xi) \displaystyle\left(1, \frac{\delta(\xi)}{\br}, -i \xi d_\delta(\xi) \right), $$
the solution of \eqref{eq:adjoint6} can be written in the following way 
\begin{align} \label{eq:representation1}
 & \sigma(x,t) = \frac{1}{2 \pi} \int_{\mathbb{R}} \sh_T(\xi) e^{i x \xi} e^{-\delta(\xi) (T-t)} \ d\xi, \notag \\
 & v(x,t) = \frac{1}{2\pi} \int_{\mathbb{R}} \sh_T(\xi) e^{i x \xi} \frac{ \delta(\xi)}{\br } e^{-\delta(\xi) (T-t)} \ d\xi, \notag \\
& \phi(x,t) = \frac{1}{2\pi} \int_{\mathbb{R}} \sh_T(\xi) e^{i x \xi} (-i \xi d_{\delta}(\xi)) e^{-\delta(\xi) (T-t)} \ d\xi. 
 \end{align}
We have the following Theorem.
\begin{thm} \label{thm8}
 Let $\sh^\epsilon_T$ be  as in Theorem \ref{thm4} and 
 $(\sigma^\epsilon(x,t), v^\epsilon(x,t),\phi^\epsilon(x,t))$  be as in  \eqref{eq:representation1}. They satisfy the following
\begin{itemize}
 \item[(i)] $\frac{1}{2\pi} e^{-2 a_1 T} \leq \|\sigma^\epsilon(\cdot,0)\|^2_{L^2(\mathbb{R})} 
 \leq \frac{1}{2\pi} $
 \item[(ii)] For any $\eta > 0$ ,  there exists a constant $C$ independent of $\epsilon$ such that 
 \begin{equation}
  \|\sigma^\epsilon\|^2_{L^2(0,T;L^2(|x - x_0|\geq \eta))} \leq C \sqrt{\epsilon}, \quad
  \|v^\epsilon\|^2_{L^2(0,T;L^2(|x - x_0|\geq \eta))} \leq C \sqrt \epsilon
 \end{equation}
\item[(iii)] 
\begin{equation} 
 \|\phi^\epsilon\|^2_{L^2(0,T;L^2(\mathbb{R}))} \leq C \epsilon^{2} 
\end{equation}
for some positive constant $C$ independent of $\epsilon$. 
\end{itemize}
\end{thm}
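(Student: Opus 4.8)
The plan is to mirror the proof of Theorem \ref{thm4}, exploiting the fact that the three components in \eqref{eq:representation1} differ from those in \eqref{eq:representation} only through their Fourier multipliers. Observe first that the $\sigma^\epsilon$ component in \eqref{eq:representation1} is \emph{identical} to the one in \eqref{eq:representation}. Consequently the lower and upper bounds in (i) follow verbatim from Theorem \ref{thm4}(i) (via Parseval together with the bound $0\le\mathrm{Re}\,\delta(\xi)<a_1$ of Lemma \ref{lem:2.6}), and the estimate $\|\sigma^\epsilon\|^2_{L^2(0,T;L^2(|x-x_0|\ge\eta))}\le C\sqrt\epsilon$ in (ii) is exactly Theorem \ref{thm4}(ii). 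So the only new content lies in the $v^\epsilon$ and $\phi^\epsilon$ estimates.

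For the velocity component the multiplier is now $\delta(\xi)/\br$ rather than $i\delta(\xi)/(\br\xi)$. By Lemma \ref{lem:2.5}, $\delta(\xi)\to\omega_0$ as $|\xi|\to\infty$ and $\delta$ is continuous on $\mathbb{R}$, so $|\delta(\xi)|\le M$ for some constant $M$; moreover $|\delta'(\xi)|\le C/|\xi|$ for $|\xi|\ge\xi_0$ by Lemma \ref{lem:2.6}. I would therefore repeat the integration-by-parts argument of Theorem \ref{thm4}(ii). After the change of variables $\zeta=\sqrt\epsilon(\xi-1/\epsilon)$ and one integration by parts in $\zeta$ on the region $|x-x_0|\ge\eta$, the function to be differentiated is $\psi(\zeta)\,\frac{\delta}{\br}\,e^{-\delta(T-t)}$ in place of $\psi(\zeta)\,e^{-\delta(T-t)}$. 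On the support of $\psi$ one has $\xi\approx 1/\epsilon$, so the additional factor $\delta/\br$ is bounded, while its $\zeta$-derivative equals $\delta'(\xi)/(\br\sqrt\epsilon)$ and is $O(\sqrt\epsilon)$ because $|\delta'(\xi)|\le C/|\xi|\le C\epsilon$ there. Hence the $\zeta$-derivative of the whole integrand remains bounded by $C(1+\sqrt\epsilon)$ uniformly in $\epsilon$, yielding the pointwise bound $|v^\epsilon(x,t)|\le C\epsilon^{1/4}/|x-x_0|$ and therefore $\|v^\epsilon\|^2_{L^2(0,T;L^2(|x-x_0|\ge\eta))}\le C\sqrt\epsilon$, the second estimate in (ii). In contrast to Theorem \ref{thm4}, the multiplier $\delta/\br$ is of order one rather than of order $\epsilon$, so here one obtains only localization away from $x_0$ and not global smallness of $v^\epsilon$ on $\mathbb{R}$.

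For the temperature component I would argue directly by Parseval, since the multiplier $-i\xi d_\delta(\xi)$ is small on the relevant frequencies. By Lemma \ref{lem:eigenfunction}, $|d_\delta(\xi)|\le C/|\xi|^2$ for $|\xi|$ large, so $|\xi d_\delta(\xi)|\le C/|\xi|$; and since $\xi\ge 1/\epsilon$ on the support of $\hat\sigma_T^\epsilon$ for $\epsilon$ small, we get $|\xi d_\delta(\xi)|\le C\epsilon$ there. Using $\hat\phi^\epsilon(\xi,t)=-i\xi d_\delta(\xi)\,e^{-\delta(\xi)(T-t)}\,\hat\sigma_T^\epsilon(\xi)$, $\mathrm{Re}\,\delta\ge 0$, and $\|\hat\sigma_T^\epsilon\|^2_{L^2(\mathbb{R})}=1$, Parseval gives
\[
\|\phi^\epsilon\|^2_{L^2(0,T;L^2(\mathbb{R}))}
=\frac{1}{2\pi}\int_0^T\!\!\int_{\mathbb{R}}|\xi d_\delta(\xi)|^2\,e^{-2\mathrm{Re}\,\delta(\xi)(T-t)}\,|\hat\sigma_T^\epsilon(\xi)|^2\,d\xi\,dt
\le C\epsilon^2 ,
\]
which is (iii).

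The only point requiring genuine checking is the velocity estimate in (ii): one must verify that $\delta(\xi)/\br$ and its $\zeta$-derivative stay controlled on the frequency band $\xi\approx 1/\epsilon$, so that the integration-by-parts localization survives and no extra power of $1/\sqrt\epsilon$ accumulates. This is precisely what the decay $|\delta'(\xi)|\le C/|\xi|$ of Lemma \ref{lem:2.6} provides. Everything else is a transcription of the estimates already established for $\sigma^\epsilon$ in Theorem \ref{thm4}.
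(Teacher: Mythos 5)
Your proof is correct and takes essentially the same approach as the paper: the paper's own proof of this theorem is the single line ``The proof is similar to Theorem \ref{thm4},'' and your argument carries out exactly that analogy, correctly isolating the only genuinely new points --- the bounded multiplier $\delta(\xi)/\br$ with $|\delta'(\xi)|\le C/|\xi|$ (so its $\zeta$-derivative is $O(\sqrt{\epsilon})$ on the frequency band $\xi\approx 1/\epsilon$) for the localization of $v^\epsilon$, and the bound $|\xi d_\delta(\xi)|\le C\epsilon$ on that band for the Parseval estimate of $\phi^\epsilon$.
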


\begin{proof}
 The proof is similar to Theorem \ref{thm4}.
\end{proof}

Now we can proceed in a similar way as in Theorem \ref{thm5} to prove Theorem \ref{thm3}.
%

\section{Null Controllability of  Compressible Non-Barotropic Navier Stokes System in One Dimension Linearized about 
\texorpdfstring{$(\bar \rho, \bar v, \bt).$}{Lg}}

In this section we will discuss null controllability of system \eqref{eq:blinearized1}. We want to prove Theorem \ref{thm1.4}. 
First we have the following Proposition about existence and uniqueness of the system \eqref{eq:blinearized1}.
\begin{prop}
 Let $(\rho_0,u_0,\theta_0) \in Z$. Let us assume that $f \in L^2(0,T;L^2(\mathcal{O}_1))$, \\ $g \in L^2(0,T;L^{2}(\mathcal{O}_2))$
 and $h \in L^2(0,T;L^{2}(\mathcal{O}_3)).$ Then \eqref{eq:blinearized1} has a unique solution $(\rho,u,\theta)$
 with $\rho \in L^2(0,T;L^2(0,L))$, $u \in L^2(0,T;H^1_0(0,L))$ and $\theta \in L^2(0,T;H^1_0(0,L)).$ Moreover $(\rho,u,\theta)$ belongs to
 $C([0,T];Z).$
\end{prop}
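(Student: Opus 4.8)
The plan is to realize \eqref{eq:blinearized1} as an abstract Cauchy problem $\frac{d}{dt}(\rho,u,\theta)^T = \mathcal{A}(\rho,u,\theta)^T + F$ on the Hilbert space $Z$ equipped with the weighted inner product introduced above, where $F = (f\chi_{\mathcal{O}_1}, g\chi_{\mathcal{O}_2}, h\chi_{\mathcal{O}_3})^T$ and
$$
\mathcal{A}\begin{pmatrix}\rho\\u\\\theta\end{pmatrix} = \begin{pmatrix} -\bar v\rho_x - \br\, u_x \\ \nu_0 u_{xx} - \frac{R\bt}{\br}\rho_x - \bar v u_x - R\theta_x \\ k_0\theta_{xx} - \frac{R\bt}{c_v}u_x - \bar v\theta_x\end{pmatrix},
$$
with domain $D(\mathcal{A}) = \{(\rho,u,\theta) \in H^1(0,L)\times (H^2\cap H^1_0)(0,L)\times(H^2\cap H^1_0)(0,L) : \rho(0)=0\}$, the condition $\rho(0)=0$ encoding the inflow boundary condition for the transport part. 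By the Lumer--Phillips theorem it suffices to show that $\mathcal{A}$ is dissipative in $Z$ and that $\lambda I - \mathcal{A}$ is surjective for some (hence all) $\lambda > 0$; then $\mathcal{A}$ generates a $C_0$-semigroup and the unique solution is given by the Duhamel formula.

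First I would verify dissipativity by integration by parts, exploiting that the weights $R\bt$, $\br^2$, $\frac{\br^2 c_v}{\bt}$ are tuned to symmetrize the coupling. The first-order coupling terms pair into exact derivatives: the contributions of $\br u_x$ (in the density equation) and $\frac{R\bt}{\br}\rho_x$ (in the velocity equation) combine to $-R\bt\br\int_0^L (u\rho)_x\,dx$, which vanishes since $u(0)=u(L)=0$, and the $R\theta_x$--$\frac{R\bt}{c_v}u_x$ coupling integrates to zero in the same way. The diffusive terms give $-\br^2\nu_0\|u_x\|^2 - \frac{\br^2 c_v k_0}{\bt}\|\theta_x\|^2 \le 0$. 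The genuinely new feature compared with the $(\br,0,\bt)$ case is the transport terms: those of $-\bar v u_x$ and $-\bar v\theta_x$ vanish by the Dirichlet conditions, while $-\bar v\rho_x$ produces $-\tfrac{R\bt\bar v}{2}\rho(L)^2 \le 0$, precisely because $\bar v > 0$ and $\rho(0)=0$. Hence $\langle\mathcal{A}y,y\rangle_Z \le 0$, and an identical computation on the associated adjoint boundary value problem shows $\mathcal{A}^*$ is dissipative as well.

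The main obstacle is the surjectivity of $\lambda I - \mathcal{A}$, because of the mixed hyperbolic--parabolic structure: the density equation is first order, so the coupling $\rho_x \leftrightarrow u_x$ is not a bounded perturbation on $(L^2(0,L))^3$ and one cannot reduce to the parabolic theory by a perturbation argument. To solve $(\lambda I - \mathcal{A})(\rho,u,\theta) = (f_1,f_2,f_3)$ I would eliminate $\rho$ from the first equation $\bar v\rho_x + \lambda\rho = f_1 - \br u_x$ by integrating this linear ODE with the inflow condition $\rho(0)=0$, obtaining $\rho$ as an explicit nonlocal linear functional of $u$. Substituting into the velocity and temperature equations yields a coupled elliptic system for $(u,\theta) \in (H^1_0(0,L))^2$ with a lower-order nonlocal term, whose unique solvability I would establish by the Lax--Milgram theorem: for $\lambda$ large the associated bilinear form is coercive, the nonlocal contribution being absorbed by the diffusion. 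One then recovers $\rho \in H^1(0,L)$ with $\rho(0)=0$ from the integral formula, which gives the required surjectivity.

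Once generation is established, the regularity statement follows from the variation-of-constants formula together with the energy estimate obtained by testing the equations with $(\rho,u,\theta)$ itself: the dissipation terms yield $u,\theta \in L^2(0,T;H^1_0(0,L))$ while $\rho$ stays in $L^2(0,T;L^2(0,L))$, and continuity in time into $Z$ comes directly from the semigroup, with the $L^2$-in-time source $F$ handled by Young's inequality. Uniqueness is immediate from the semigroup representation. Since this is the exact analogue of the well-posedness proposition for \eqref{eq:linearized}, I expect the argument to parallel that case, the only genuinely extra ingredient being the dissipativity of the transport terms recorded above.
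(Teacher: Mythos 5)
Your proposal is correct and takes essentially the same route as the paper: the paper states this proposition without detailed proof, as following from standard semigroup theory (exactly as it says for the analogous proposition for \eqref{eq:linearized}), and your Lumer--Phillips argument in the weighted inner product --- with the inflow condition $\rho(0)=0$ in the domain, the cancellation of the coupling terms, the sign-definite boundary term $-\tfrac{R\bt\bar v}{2}\rho(L)^2$, and the elimination-plus-Lax--Milgram resolvent step --- is the standard way to carry that out. All the details you supply check out, so there is nothing to correct.
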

Proceeding as before we have
the following proposition about the equivalence of null controllability and observability inequality. 
\begin{prop}
 For every initial state $(\rho_0,u_0,\theta_0) \in (L^2(0,L))^3,$ the system \eqref{eq:blinearized1} is null controllable 
 in any time $T$
 by localized interior 
 controls $f \in L^2(0,T;L^2(\mathcal{O}_1)),$  $g \in L^2(0,T;L^2(\mathcal{O}_2))$ and $h \in L^2(0,T;L^2(\mathcal{O}_3))$
 if and only if for every 
 $(\sigma_T, v_T, \phi_T) \in (L^2(0,L))^3,$ $(\sigma,v,\phi),$ the solution of the following adjoint system 
 \begin{equation} \label{eq:badjoint}
\begin{array}{l}
\displaystyle
- \sigma_t - \bar v \sigma_x - \br \;v_x \;\; = 0, \mbox{ in } \ip \times (0,T),\\ [2.mm]
\displaystyle
-  v_t (x,t)- \nu_0 v_{xx}  - \frac{R \bt}{\br}\;\sigma_x  - \bar v v_x - 
R \phi_x  \; = 0, \mbox{ in } \ip \times (0,T), \\[3.mm]
\displaystyle
- \phi_t -\frac{k}{\br c_v} \phi_{xx} - \frac{R \bt}{c_v} v_x - \bar v \phi_x(x,t) \; = 0, \mbox{ in } \ip \times (0,T),
\\[3.mm] \displaystyle
\sigma(T) = \sigma_T,\quad \quad   v(T)= v_T,  \mbox{ and } \phi(T) = \phi_T, \qquad \mbox{ in } \ip \times (0,T),,
\\ [2.mm] \displaystyle
\sigma(L,t) = 0, \;\;\; v(0,t) = 0 = v(L,t), \;\;  t \in (0,T),
\\ [2.mm] \displaystyle
\phi(0,t) = 0 = \phi(L,t), \;\;  t \in (0,T).
\end{array}
\end{equation}
satisfies the following observability inequality
\begin{align} \label{eq:bobservability}
&\|\sigma(\cdot,0)\|^2_{L^2(0,L)} + \|v(\cdot,0)\|^2_{L^2(0,L)}  + \|\phi(\cdot,0)\|^2_{L^2(0,L)}  
  \notag \\
  &\leq C \left ( \int_0^T \int_{{\mathcal O}_1} \sigma^2 dx dt + \int_0^T \int_{{\mathcal O}_2} v^2 dx dt
  + \int_0^T \int_{{\mathcal O}_3} \phi^2 dx dt \right ).
\end{align}
\end{prop}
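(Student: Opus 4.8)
The plan is to follow exactly the duality scheme already carried out in Section~2 for the identity \eqref{eq:identity_0}, adapting it to the presence of the convective terms $\bar v\,\rho_x$, $\bar v\,u_x$, $\bar v\,\theta_x$ and to the correspondingly modified boundary conditions of \eqref{eq:blinearized1} and \eqref{eq:badjoint}. First I would take smooth, compactly supported data $(\rho_0,u_0,\theta_0)$, $(\sigma_T,v_T,\phi_T)$ and controls $f,g,h$, let $(\rho,u,\theta)$ and $(\sigma,v,\phi)$ be the solutions of \eqref{eq:blinearized1} and \eqref{eq:badjoint}, and form the weighted pairing: multiply the continuity, momentum and energy equations of \eqref{eq:blinearized1} by $R\bt\,\sigma$, $\br^2 v$ and $\frac{\br^2 c_v}{\bt}\phi$ respectively, add, and integrate over $\ip\times(0,T)$. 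A density argument then extends the resulting identity to all data in $Z$.

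The core computation is an integration by parts in both $t$ and $x$. The time integration produces the pairing $\big[\langle(\rho,u,\theta),(\sigma,v,\phi)\rangle_Z\big]_0^T$, while the spatial integration by parts must reproduce precisely the adjoint operator appearing in \eqref{eq:badjoint}. Here I would check term by term that the weights $R\bt$, $\br^2$, $\frac{\br^2 c_v}{\bt}$ are exactly those that render the first-order couplings skew-adjoint: the term $\br u_x$ tested against $R\bt\,\sigma$ pairs with $-\frac{R\bt}{\br}\sigma_x$ tested against $\br^2 v$, and $R\theta_x$ tested against $\br^2 v$ pairs with $\frac{R\bt}{c_v}u_x$ tested against $\frac{\br^2 c_v}{\bt}\phi$, yielding the couplings $-\br v_x$, $-\frac{R\bt}{\br}\sigma_x$, $-R\phi_x$, $-\frac{R\bt}{c_v}v_x$ of \eqref{eq:badjoint}; the diffusive terms $-\nu_0 u_{xx}$, $-k_0\theta_{xx}$ transpose to $-\nu_0 v_{xx}$, $-k_0\phi_{xx}$, and each transport term $\bar v\,(\cdot)_x$ transposes to $-\bar v\,(\cdot)_x$.

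The step I expect to be the main obstacle is the bookkeeping of the boundary terms at $x=0,L$, which is where the convection changes the picture relative to Section~2. Integrating $\bar v\,\rho_x$ against $R\bt\,\sigma$ leaves a boundary contribution $R\bt\,\bar v\,[\rho\sigma]_0^L$; since the continuity equation is first order, the only admissible (inflow, as $\bar v>0$) condition is $\rho(0,t)=0$, and it is matched precisely by the adjoint outflow condition $\sigma(L,t)=0$, so this term vanishes. The analogous transport terms for $u$ and $\theta$, as well as the boundary contributions $[u_x v - u v_x]_0^L$ and $[\theta_x\phi-\theta\phi_x]_0^L$ from the second-order parts, vanish because $u,\theta$ (and $v,\phi$) satisfy homogeneous Dirichlet conditions at both endpoints. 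Having verified that no uncontrolled boundary term survives, the identity reduces to the clean duality relation
\begin{align*}
&\left\langle\begin{pmatrix}\rho(\cdot,T)\\ u(\cdot,T)\\ \theta(\cdot,T)\end{pmatrix},\begin{pmatrix}\sigma_T\\ v_T\\ \phi_T\end{pmatrix}\right\rangle_Z - \left\langle\begin{pmatrix}\rho_0\\ u_0\\ \theta_0\end{pmatrix},\begin{pmatrix}\sigma(\cdot,0)\\ v(\cdot,0)\\ \phi(\cdot,0)\end{pmatrix}\right\rangle_Z \\
&\quad = R\bt\int_0^T\int_{\mathcal{O}_1} f\sigma\,dx\,dt + \br^2\int_0^T\int_{\mathcal{O}_2} g v\,dx\,dt + \frac{\br^2 c_v}{\bt}\int_0^T\int_{\mathcal{O}_3} h\phi\,dx\,dt.
\end{align*}

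Finally, from this identity the stated equivalence is a standard functional-analytic fact. Null controllability in time $T$ — the ability to choose $f,g,h$ driving $(\rho,u,\theta)(\cdot,T)=0$ for every initial state — is equivalent, via the open-mapping/closed-range characterization applied to the control-to-state map, to the observability inequality \eqref{eq:bobservability} for the adjoint solutions. I would simply invoke the abstract duality result exactly as in Section~2, citing \cite{CORON}, Chapter~2 and \cite{SZ}, Section~2, since the only system-dependent ingredient, namely the duality identity displayed above, has now been established.
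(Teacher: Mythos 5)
Your proposal is correct and takes essentially the same approach as the paper: the paper gives no separate argument for this proposition, saying only that one ``proceeds as before,'' i.e.\ it reuses the Section~2 duality identity with the weighted inner product on $Z$ together with the standard equivalence between null controllability and observability from \cite{CORON} and \cite{SZ}, which is precisely what you carry out. The one genuinely system-specific point --- that the transport boundary term $R\bt\,\bar v\,[\rho\sigma]_0^L$ vanishes because the inflow condition $\rho(0,t)=0$ of \eqref{eq:blinearized1} is matched by the adjoint outflow condition $\sigma(L,t)=0$ in \eqref{eq:badjoint} --- is identified and handled correctly in your write-up.
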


The above adjoint system is well posed in $(L^2(0,L))^3$. We want to show the the observability inequality  
\eqref{eq:bobservability} does not hold for small time $T$. Let us first 
consider the adjoint problem \eqref{eq:badjoint}, in $\mathbb{R}\times(0,T)$ as a terminal value problem only 
\begin{equation} \label{eq:badjointR}
 \begin{array}{l}
\displaystyle
- \sigma_t  - \bar v \sigma_x  - \br \;v_x \;\; = 0,\mbox{ in } \mathbb{R} \times (0,T), \\ [2.mm]
\displaystyle
- v_t - \nu_0 v_{xx}  - \frac{R \bt}{\br}\;\sigma_x  - \bar v v_x - 
R \phi_x  \; = 0, \mbox{ in } \mathbb{R} \times (0,T), \\[3.mm]
\displaystyle
-  \phi_t  -\frac{k}{\br c_v} \phi_{xx} - \frac{R \bt}{c_v} v_x - \bar v \phi_x \; = 0, \mbox{ in } \mathbb{R} \times (0,T),
\\[3.mm] \displaystyle
\sigma(T) = \sigma_T,\quad \quad   v(T)= v_T,  \mbox{ and } \phi(T) = \phi_T, \qquad \mbox{ in } \mathbb{R}.
\end{array}
\end{equation}

We have the following theorem.
\begin{thm}
Let $(\sigma_T^\epsilon, v_T^\epsilon, \phi_T^\epsilon)$ be as in Theorem \ref{thm4}.
 Let $(\sigma^\epsilon, v^\epsilon, \phi^\epsilon)$ be the solution of \eqref{eq:badjointR} with this terminal condition.
 Then they satisfy the following estimates
 \begin{itemize}
  \item [(i)] $\displaystyle \frac{1}{2\pi} e^{-2a_1T} \leq \|\sigma^\epsilon(\cdot,0)\|_{L^2(\mathbb{R})} \leq \frac{1}{2\pi} .$
  \item [(ii)] For any $\eta > 0$, there exists a constant $C$ independent of $\epsilon$ such that 
  \begin{equation}
   \|\sigma^\epsilon\|^2_{L^2(0,T;L^2(|x-x(t)|\geq \eta))} \leq C \sqrt{\epsilon},
  \end{equation}
where $x(t) = x_0 - \bar v(T -t).$
\item[(iii)] 
There exists a constant $C,$ independent of $\epsilon,$ such that 
\begin{equation}
 \|v^\epsilon\|^2_{L^2(0,T;L^2(\mathbb{R}))} \leq C \epsilon^2, \qquad \|\phi^\epsilon\|^2_{L^2(0,T;L^2(\mathbb{R}))} \leq C \epsilon^4.
\end{equation}
\end{itemize}
\end{thm}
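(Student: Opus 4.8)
The plan is to observe that, in Fourier variables, the sole effect of the convection terms $-\bar v\sigma_x$, $-\bar v v_x$, $-\bar v\phi_x$ in \eqref{eq:badjointR} is to add a purely imaginary multiple of the identity to the symbol of the previous adjoint system, so that the entire analysis of Theorem \ref{thm4} carries over with the beam now travelling along the characteristic $x(t)=x_0-\bar v(T-t)$.

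First I would apply the Fourier transform in $x$. Since $\widehat{\sigma_x}=i\xi\hat\sigma$ and $\widehat{v_{xx}}=-\xi^2\hat v$, the system \eqref{eq:badjointR} becomes $-\partial_t(\sh,\vh,\ph)^T=\tilde A(\xi)(\sh,\vh,\ph)^T$ with $\tilde A(\xi)=A(\xi)+\bar v\,i\xi\,I$, where $A(\xi)$ is the matrix \eqref{eq:matrix} and $I$ is the identity. Adding a scalar matrix shifts every eigenvalue by $\bar v i\xi$ while leaving all eigenvectors unchanged; in particular the eigenvalue $-\delta(\xi)$ becomes $-\delta(\xi)+\bar v i\xi$ and its eigenvector is still $\left(1,\,i\delta(\xi)/(\br\xi),\,d_\delta(\xi)\right)$ from Lemma \ref{lem:eigenfunction}. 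Because the terminal data $(\sigma_T^\epsilon,v_T^\epsilon,\phi_T^\epsilon)$ of Theorem \ref{thm4} is built exactly along this eigenvector, the solution reads $\sh^\epsilon(\xi,t)=e^{-\delta(\xi)(T-t)}\,e^{\bar v i\xi(T-t)}\,\sh_T^\epsilon(\xi)$, with $\vh^\epsilon,\ph^\epsilon$ carrying the same factor $e^{\bar v i\xi(T-t)}$ multiplied respectively by $i\delta/(\br\xi)$ and $d_\delta$. Inverting and using $\sh_T^\epsilon(\xi)=\epsilon^{1/4}\psi(\sqrt\epsilon(\xi-1/\epsilon))e^{-ix_0\xi}$, the phases combine to $e^{i(x-x_0+\bar v(T-t))\xi}=e^{i(x-x(t))\xi}$ with $x(t)=x_0-\bar v(T-t)$.

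Estimates (i) and (iii) then follow verbatim from the proofs of Theorem \ref{thm4}(i) and (iii): since $|e^{\bar v i\xi(T-t)}|=1$, the convection leaves every modulus unchanged, so Parseval reduces (i) to $\frac{1}{2\pi}\int_{\mathbb R}e^{-2\mathrm{Re}\,\delta(\xi)\,T}|\sh_T^\epsilon(\xi)|^2\,d\xi$, bounded above and below through $0\le\mathrm{Re}\,\delta(\xi)<a_1$ (Lemma \ref{lem:2.6}) together with the unit mass of $\psi$; while (iii) uses the eigenfunction bounds $|i\delta/(\br\xi)|^2\le C\epsilon^2$ and $|d_\delta|^2\le C\epsilon^4$ on the support $\xi\sim 1/\epsilon$ of the profile, exactly as before.

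For (ii) I would repeat the integration-by-parts argument of Theorem \ref{thm4}(ii), the only change being that the fixed point $x_0$ is replaced by the moving point $x(t)$. After the substitution $\zeta=\sqrt\epsilon(\xi-1/\epsilon)$ one obtains $\sigma^\epsilon(x,t)=\frac{\epsilon^{-1/4}}{2\pi}\int_0^1\psi(\zeta)\,e^{i(x-x(t))(\zeta/\sqrt\epsilon+1/\epsilon)}\,e^{-\delta(\zeta/\sqrt\epsilon+1/\epsilon)(T-t)}\,d\zeta$; for $|x-x(t)|\ge\eta$, integrating by parts in $\zeta$ and bounding the $\zeta$-derivative of the amplitude via the estimate \eqref{eq:2.13} gives $|\sigma^\epsilon(x,t)|\le C\epsilon^{1/4}/|x-x(t)|$, and integrating over the region $|x-x(t)|\ge\eta$ and over $t\in(0,T)$ yields the bound $C\sqrt\epsilon$. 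The only genuinely new point, and the one I would watch most carefully, is the bookkeeping of this moving window: the phase is stationary along $x=x(t)$, so the beam is localized in a tube around the characteristic of speed $\bar v$ rather than around the fixed point $x_0$. This is precisely the feature exploited later (through the restriction $T<\max\{l_1/\bar v,(L-l_2)/\bar v\}$) to keep the beam away from the observation region $\mathcal{O}_1$ and thereby defeat the observability inequality \eqref{eq:bobservability}.
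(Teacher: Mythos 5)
Your proof is correct, and it rests on the same underlying observation as the paper's: the convective terms merely transport the Gaussian beam along the characteristic $x(t)=x_0-\bar v(T-t)$, so everything reduces to Theorem \ref{thm4}. The implementations differ, though. The paper performs the reduction in physical space in two lines: it sets $\tilde\sigma(x,t)=\sigma(x-\bar v(T-t),t)$ (and likewise for $v,\phi$), checks that $(\tilde\sigma,\tilde v,\tilde\phi)$ solves the untransported system \eqref{eq:adjoint2} with the same terminal data, and then reads off all three estimates at once from Theorem \ref{thm4}, since translations are isometries of $L^2(\mathbb{R})$ and the change of variables carries the set $\{|x-x(t)|\geq\eta\}$ onto $\{|y-x_0|\geq\eta\}$. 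You instead work on the Fourier side: the symbol of \eqref{eq:badjointR} is $A(\xi)+\bar v\,i\xi\,I$, so every eigenvalue shifts by the purely imaginary scalar $\bar v i\xi$ while the eigenvector of Lemma \ref{lem:eigenfunction} is unchanged, and the solution acquires only the unimodular factor $e^{\bar v i\xi(T-t)}$ --- which is precisely the Fourier-side expression of the paper's translation. Your route then obliges you to re-run the Parseval argument for (i) and (iii) and the integration-by-parts argument for (ii) with $x_0$ replaced by the moving point $x(t)$, all of which you do correctly (the derivative bound \eqref{eq:2.13} enters exactly as before, since the phase contributes the factor $i(x-x(t))/\sqrt{\epsilon}$). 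The paper's change of variables buys brevity and avoids repeating any estimate; your version is longer but self-contained, and it makes explicit the mechanism by which the beam's center moves with speed $\bar v$ --- that is, exactly where the smallness condition $T<\max\{l_1/\bar v,(L-l_2)/\bar v\}$ of Theorem \ref{thm1.4} originates.
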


\begin{proof}
Let $(\sigma,v,\phi)$ be a solution of \eqref{eq:badjointR}.
Let us define the transformed functions for $(x,t) \in \mathbb{R} \times (0,T)$
$$\tilde\sigma(x,t) = \sigma(x-\bar v (T-t),t),\  \tilde v(x,t) = v(x-\bar v (T-t),t),\  \tilde\phi(x,t) = \phi(x-\bar v (T-t),t).$$
Then $(\tilde\sigma,\tilde v,\tilde \phi)$ is a solution of  the system \eqref{eq:adjoint2}. 
Thus from Theorem \ref{thm4}, we easily prove the theorem.  
\end{proof}

\noindent  We  now give the proof of Theorem \ref{thm1.4}. 

{\bf Proof of Theorem \ref{thm1.4}.}
 
 \begin{proof}
 We will proceed in a similar way as in Theorem \ref{thm5}. In order to obtain a cotradiction to the observability inequality 
 \eqref{eq:bobservability}, we need $(x(t) -\eta, x(t)+\eta) \subset (0,L)$ and does not intersect the set 
 $\mathcal{O}_1 = (l_1,l_2)$ for all $t.$ In particular
   $ (x_0 -\eta - \bar v T, x_0+\eta - \bar v T)$ must lie inside $(0,L)$ and does not intersect $(l_1,l_2).$ Thus  we have to choose 
   $x_0$ and $\eta$ properly so that 
  $ (x_0 -\eta - \bar v T, x_0+\eta - \bar v T)$ is either a proper subset of $(0,l_1)$ or $(l_2,L)$. 

   Let us first choose $x_0 \in (0,l_1)$ and $\eta >0$ such that $(x_0 -\eta, x_0+\eta) \subset (0, l_1)$.  
   Thus if we want $(x_0 -\eta - \bar v T, x_0+\eta - \bar v T)$ to be a proper subset of $(0,l_1)$, 
   we need $x_0 - \eta - \bar v T > 0.$ This implies $\displaystyle T < \frac{l_1}{\bar v}.$ Similarly if we choose $x_0 \in (l_2,L)$
   then we need $\displaystyle T < \frac{L-l_2}{\bar v}.$
  
  Thus when $\displaystyle T < \max\left\{ \frac{l_1}{\bar v}, \frac{L-l_2}{\bar v }\right\},$ the observability inequality 
  \eqref{eq:bobservability} does not hold and hence the system is not null controllable.
  \end{proof}



\section{Null Controllability of  Compressible Barotropic Navier Stokes System in Two Dimension Linearized about 
\texorpdfstring{$(\bar \rho, 0,0).$}{Lg}}
 In this section, we will prove Theorem \ref{thm1.5}. We will construct highly localized solutions, as we did in one dimension. Let us introduce 
 the following constants
 $$\displaystyle \mu_0 := \frac{\mu}{\bar \rho}, \quad \gamma_0 := \frac{\lambda + \mu}{\bar \rho}, 
 \quad b_1 := a \gamma \bar \rho^{\gamma -2} . $$
 The system
 \eqref{eq:linearized2D} is well posed in $(L^2(\Omega))^3.$ As before we have the equivalence between null controllability and observability 
 inequality

\begin{prop}
 For every initial state $(\rho_0,{\bf u}_0) \in (L^2(\Omega))^3,$ the system \eqref{eq:linearized2D} is null controllable by localized interior 
 controls $f \in L^2(0,T;L^2(\mathcal{O}_1))$ and ${\bf g} \in L^2(0,T;(L^2(\mathcal{O}_2))^2)$ if and only if for every 
 $(\sigma_T, {\bf v}_T) \in (L^2(\Omega))^3$ the solution of the following adjoint system 
 \begin{equation} \label{eq:2Dadjoint}
 \begin{array}{l}
\displaystyle
- \sigma_t (x,t) -  \bar \rho \ \mathrm{div}\;{\bf v} \;\;= 0, \mbox{ in } \Omega \times (0,T), 
\\[2.mm] \displaystyle
-  {\bf v}_t - \mu_0  \Delta {\bf v} - \gamma_0 \nabla \mathrm{div} \ {\bf v} 
 - b_1 \nabla \sigma \;=\;0 ,  \mbox{ in } \Omega \times (0,T),
\\[2.mm] \displaystyle
\sigma(T) = \sigma_T\quad \mbox{and}\quad   {\bf v}(T)= {\bf v}_T, \quad  \mbox{ in } \Omega,
\\ [2.mm] \displaystyle
 {\bf v} = {\bf 0}  \ \ \mbox{ on } \partial \Omega \times (0,T).
\end{array}
\end{equation}
satisfies the observability inequality
\begin{equation} \label{eq:2Dobservability}
 \|\sigma(\cdot,0)\|^2_{L^2(\Omega)} + \|{\bf v}(\cdot,0)\|^2_{(L^2(\Omega))^2} \leq C \left(\int_0^T \int_{\mathcal{O}_1} \sigma^2 dx dt 
  + \int_0^T \int_{\mathcal{O}_2} {\bf v}^2 dx dt \right ).
\end{equation}
\end{prop}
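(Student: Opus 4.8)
The plan is to reproduce, in the two–dimensional setting, the duality scheme already carried out for the one–dimensional systems in Section 2; the statement is the abstract equivalence ``null controllability $\Longleftrightarrow$ observability'' for a linear control system (see \cite{CORON}, Chapter 2, and \cite{SZ}, Section 2), so the only genuinely system–dependent points are to check that \eqref{eq:2Dadjoint} is the correct adjoint and that the boundary terms disappear. First I would equip the state space $(L^2(\Omega))^3$ with the weighted inner product
\[
\left\langle \begin{pmatrix}\rho\\ {\bf u}\end{pmatrix}, \begin{pmatrix}\sigma\\ {\bf v}\end{pmatrix}\right\rangle
:= b_1 \int_\Omega \rho\,\sigma\,dx + \bar\rho \int_\Omega {\bf u}\cdot{\bf v}\,dx ,
\]
which is the natural two–dimensional analogue of the inner product on $Z$ used in Section 2. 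The weights $b_1$ and $\bar\rho$ are chosen precisely so that the skew–symmetric density–velocity coupling — the term $\bar\rho\,\mathrm{div}\,{\bf u}$ in the continuity equation paired against $\sigma$, and the term $b_1\nabla\rho$ in the momentum equation paired against ${\bf v}$ — is symmetrized after integration by parts, while the viscous part remains self–adjoint; with these weights the backward generator of \eqref{eq:2Dadjoint} coincides exactly with the adjoint, in this inner product, of the generator of \eqref{eq:linearized2D}.

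Next I would derive the fundamental duality identity. Assuming first that $(\rho_0,{\bf u}_0)$, $(\sigma_T,{\bf v}_T)$, $f$ and ${\bf g}$ are smooth and compactly supported, I would take the weighted $(L^2(\Omega))^3$ pairing of the controlled system \eqref{eq:linearized2D} against the adjoint solution $(\sigma,{\bf v})$ and integrate over $\Omega\times(0,T)$. Integration by parts in time moves the $t$–derivatives onto $(\sigma,{\bf v})$ and produces the end–point contributions at $t=0$ and $t=T$, while integration by parts in space transfers $\Delta$, $\nabla\mathrm{div}$ and $\nabla$ onto $(\sigma,{\bf v})$; here the homogeneous Dirichlet conditions ${\bf u}={\bf v}={\bf 0}$ on $\partial\Omega$ guarantee that every boundary integral vanishes. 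Using that $(\sigma,{\bf v})$ solves \eqref{eq:2Dadjoint}, all interior terms cancel and one is left with
\[
b_1\!\int_\Omega\!\big[\rho(T)\sigma_T-\rho_0\sigma(0)\big]dx
+\bar\rho\!\int_\Omega\!\big[{\bf u}(T)\cdot{\bf v}_T-{\bf u}_0\cdot{\bf v}(0)\big]dx
= b_1\!\int_0^T\!\!\int_{\mathcal{O}_1}\! f\sigma\,dx\,dt
+\bar\rho\!\int_0^T\!\!\int_{\mathcal{O}_2}\!{\bf g}\cdot{\bf v}\,dx\,dt ,
\]
which is the exact two–dimensional counterpart of \eqref{eq:identity_0}. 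A density argument, justified by the well–posedness of both \eqref{eq:linearized2D} and \eqref{eq:2Dadjoint} in $(L^2(\Omega))^3$ stated above, extends this identity to all data in the energy space.

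Finally, from this identity the requirement $(\rho(T),{\bf u}(T))=0$ for every initial datum is equivalent to asking that, for all terminal data $(\sigma_T,{\bf v}_T)$,
\[
b_1\!\int_0^T\!\!\int_{\mathcal{O}_1}\! f\sigma\,dx\,dt
+\bar\rho\!\int_0^T\!\!\int_{\mathcal{O}_2}\!{\bf g}\cdot{\bf v}\,dx\,dt
+\left\langle\begin{pmatrix}\rho_0\\ {\bf u}_0\end{pmatrix},\begin{pmatrix}\sigma(0)\\ {\bf v}(0)\end{pmatrix}\right\rangle=0 ,
\]
exactly as in the one–dimensional Propositions. The existence, for every initial datum, of controls $(f,{\bf g})\in L^2(0,T;L^2(\mathcal{O}_1))\times (L^2(0,T;L^2(\mathcal{O}_2)))^2$ realizing this relation with an $L^2$ norm bounded by the data is then, by the classical functional–analytic duality lemma — the surjectivity of the control operator being dual to a coercive lower bound for its adjoint — equivalent to the observability inequality \eqref{eq:2Dobservability}. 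I expect the only delicate step to be the verification that \eqref{eq:2Dadjoint} is the genuine $(L^2(\Omega))^3$–adjoint together with the vanishing of all boundary contributions in the spatial integrations by parts; once the weighted inner product is fixed this is a direct computation, and the passage from smooth to $L^2$ data is routine given the stated well–posedness.
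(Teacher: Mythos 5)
Your proposal is correct and follows exactly the route the paper intends: the paper gives no separate proof for this proposition, stating only ``As before we have the equivalence,'' i.e.\ it invokes the Section~2 duality scheme (weighted inner product, integration-by-parts identity analogous to \eqref{eq:identity_0}, then the standard equivalence with observability from \cite{SZ} and \cite{CORON}), which is precisely what you carry out in the two-dimensional setting. Your choice of weights is the right one, since the compatibility condition for \eqref{eq:2Dadjoint} to be the adjoint generator is $w_1\br = w_2 b_1$, which $(w_1,w_2)=(b_1,\br)$ satisfies, mirroring the weighted inner product on $Z$ used in Section~2.
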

Here ${\bf v}(x,t) = (v_1,v_2)(x,t).$
The above adjoint system is well posed in $(L^2(\Omega))^3.$ We will now use ``Gaussian Beam '' construction  to show that the observability
inequality \eqref{eq:2Dobservability} does not hold. Let us first consider the adjoint problem \eqref{eq:2Dadjoint} in 
$\mathbb{R}^2 \times (0,T)$ as a terminal value problem only. Applying the Fourier transformation, we obtain the following system of ODE

\begin{align}
&- \begin{pmatrix} \sh \\ \vh_1 \\ \vh_2  \end{pmatrix}_t = \tilde A(\xi) \begin{pmatrix} \sh \\ \vh_1 \\ \vh_2  \end{pmatrix} ,
\;\;(\sh,\vh_1,\vh_2)(\xi,T) = (\sh_T, \vh_{1,T},\vh_{2,T}), \ \; (\xi, t) \in \mathbb{R}^2 \times (0,T) 
\end{align}
where 
\begin{equation}
\displaystyle
 \tilde A(\xi) = \begin{pmatrix}
                  0 & \br i\xi_1 & \br i\xi_2 \\ b_1 i \xi_1 & -\mu_0 |\xi|^2 - \gamma_0 \xi^2_1 & -\gamma_0 \xi_1 \xi_2 
                  \\ b_1 i \xi_2 & -\gamma_0 \xi_1 - \xi_2 & -\mu_0 |\xi|^2 - \gamma_0 \xi^2_2
                 \end{pmatrix}.
\end{equation}
We have the following lemma.
\begin{lem}
\begin{itemize}
 \item The eigenvalues of $\tilde A(\xi)$ are  
 $$\tilde \lambda(\xi) = -\mu_0 |\xi|^2,\;\;
 \tilde \mu(\xi) = -\frac{(\gamma_0+\mu_0)|\xi|^2}{2}\left(1 + \sqrt{1- \frac{4b_1\br}{(\gamma_0+\mu_0)|\xi|^2}}\right) $$ 
 $$\tilde \delta(\xi) = -\frac{(\gamma_0+\mu_0)|\xi|^2}{2}\left(1 -  \sqrt{1- \frac{4b_1\br}{(\gamma_0+\mu_0)|\xi|^2}}\right) $$
 \item For all $\xi \in \mathbb{R}^2,$ there exists a constant $ a_2> 0 $, such that 
 $$0 \leq - \mathrm{Re} \ \tilde \delta(\xi) < a_2. $$
 \item There exists $\xi_0 > 0,$ such that for all $|\xi| \geq \xi_0$ , $\tilde \lambda(\xi),\tilde \mu(\xi)$ and $\tilde \delta(\xi)$  
 are all real and distinct. The eigenvalues satisfy
 $$\displaystyle \lim_{|\xi|\rightarrow \infty} \frac{-\tilde \mu(\xi)}{|\xi|^2} = \mu_0 +\gamma_0,\qquad 
 \lim_{|\xi|\rightarrow \infty} {-\tilde \delta(\xi)} = \frac{b_1\bar \rho}{\mu_0 +\gamma_0}.\qquad  $$
 \item
  For $|\xi|\geq \xi_0,$ the eigenvalues are differentiable and  we have 
 $$\displaystyle \left| \bigtriangleup_{\xi} \tilde \delta(\xi) \right| \leq \frac{C}{|\xi|^2},$$
 for some positive constant $C$. 
 \item The eigenfunction of $\tilde A(\xi)$ corresponding to $\tilde \delta(\xi)$ is
 $\displaystyle\left(1, \frac {\tilde \delta(\xi) \xi_1}{\br i |\xi|^2}, \frac {\tilde \delta(\xi) \xi_2}{\br i |\xi|^2}\right).$
\end{itemize}
\end{lem}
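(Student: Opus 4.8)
The plan is to exploit the rotational structure of the velocity block of $\tilde A(\xi)$. Writing $\xi=(\xi_1,\xi_2)$ and denoting by $B(\xi)=-\mu_0|\xi|^2 I_2-\gamma_0\,\xi\xi^{\top}$ the lower-right $2\times2$ block, I first note that the divergence-free direction decouples: the vector $(0,-\xi_2,\xi_1)$ is annihilated in the density row and is an eigenvector of $B(\xi)$ for the eigenvalue $-\mu_0|\xi|^2$, so $\tilde\lambda(\xi)=-\mu_0|\xi|^2$ is an eigenvalue of $\tilde A(\xi)$. For the remaining two I would take the Schur complement of the $(1,1)$ entry: since $\xi$ is an eigenvector of $B(\xi)$ with eigenvalue $-(\mu_0+\gamma_0)|\xi|^2$, one gets $\xi^{\top}\big(B(\xi)-\lambda I_2\big)^{-1}\xi=-|\xi|^2\big/\big((\mu_0+\gamma_0)|\xi|^2+\lambda\big)$, so the characteristic polynomial factors as $\big(\lambda+\mu_0|\xi|^2\big)$ times
\[
\lambda^2+(\mu_0+\gamma_0)|\xi|^2\,\lambda+b_1\br\,|\xi|^2=0 .
\]
The two roots of this quadratic are $\tilde\mu(\xi)$ and $\tilde\delta(\xi)$; solving it, and factoring $(\mu_0+\gamma_0)^2|\xi|^4$ out of the discriminant (which leaves $1-4b_1\br/((\mu_0+\gamma_0)^2|\xi|^2)$), gives the stated closed forms, with $\tilde\delta$ the root that remains bounded as $|\xi|\to\infty$.

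For the real-part bound I would use Vieta's relations for the quadratic: for $\xi\neq0$ the sum of its roots is $-(\mu_0+\gamma_0)|\xi|^2<0$ and their product is $b_1\br|\xi|^2>0$, hence both roots have negative real part and $-\mathrm{Re}\,\tilde\delta(\xi)\ge0$. For the upper bound I split on the sign of the discriminant: where it is negative (small $|\xi|$) the roots are complex conjugate with common real part $-(\mu_0+\gamma_0)|\xi|^2/2$, bounded on the compact range in question, while where it is positive (large $|\xi|$) $\tilde\delta$ is real and converges to a finite limit; continuity together with the limit at infinity yields a uniform $a_2$. For the third item I fix $\xi_0$ so that for $|\xi|\ge\xi_0$ the discriminant is strictly positive, making $\tilde\mu,\tilde\delta$ real and $\tilde\mu\neq\tilde\delta$; moreover the value of the quadratic at $\lambda=\tilde\lambda=-\mu_0|\xi|^2$ equals $|\xi|^2\big(b_1\br-\mu_0\gamma_0|\xi|^2\big)$, which is nonzero for $|\xi|$ large, so $\tilde\lambda$ is distinct from both and all three eigenvalues are distinct. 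The limits come from rationalizing:
\[
\tilde\delta(\xi)=-\,\frac{2b_1\br/(\mu_0+\gamma_0)}{1+\sqrt{1-4b_1\br/((\mu_0+\gamma_0)^2|\xi|^2)}},
\]
whence $-\tilde\delta(\xi)\to b_1\br/(\mu_0+\gamma_0)$, while $\tilde\mu=-(\mu_0+\gamma_0)|\xi|^2-\tilde\delta$ gives $-\tilde\mu(\xi)/|\xi|^2\to\mu_0+\gamma_0$.

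For the Laplacian estimate I would observe that $\tilde\delta$ depends on $\xi$ only through $s=|\xi|^2$, say $\tilde\delta=F(|\xi|^2)$ with $F$ the rationalized expression above; $F$ is a smooth function of $1/s$ for $s$ large, with $F(s)=-b_1\br/(\mu_0+\gamma_0)+O(1/s)$, $F'(s)=O(1/s^2)$ and $F''(s)=O(1/s^3)$. Since in two dimensions $\Delta_\xi\big(F(|\xi|^2)\big)=4|\xi|^2F''(|\xi|^2)+4F'(|\xi|^2)$, both terms are $O(1/|\xi|^4)$, which is bounded by $C/|\xi|^2$ for $|\xi|\ge\xi_0$. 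Finally, for the eigenfunction I would seek a longitudinal vector $(1,\alpha\xi_1,\alpha\xi_2)$: the density row forces $\br i\,\alpha|\xi|^2=\tilde\delta$, i.e.\ $\alpha=\tilde\delta/(\br i|\xi|^2)$, which is exactly the claimed vector, and the two velocity rows, using $B(\xi)\xi=-(\mu_0+\gamma_0)|\xi|^2\xi$ together with the quadratic relation $\tilde\delta\big(\tilde\delta+(\mu_0+\gamma_0)|\xi|^2\big)=-b_1\br|\xi|^2$, reduce to multiplication by $\tilde\delta$ on the velocity part $(\alpha\xi_1,\alpha\xi_2)$, as required.

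I expect the only genuinely delicate point to be the consistent, differentiable selection of the branch $\tilde\delta$ as the bounded ``acoustic'' root (as opposed to the two diffusive roots that grow like $|\xi|^2$) across the value of $\xi$ where the discriminant changes sign, and then transporting that smoothness into the second-derivative decay of the fourth item; the rest is block linear algebra together with Vieta's relations and the explicit quadratic.
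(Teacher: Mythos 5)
Your proof is correct, and there is in fact nothing in the paper to measure it against: the lemma is stated in Section 4 without any proof, the author evidently treating it as a routine analogue of the one-dimensional Lemmas 2.5--2.7. Your argument is cleaner than those one-dimensional proofs precisely because you exploit what makes the two-dimensional barotropic case special: the transverse direction $(0,-\xi_2,\xi_1)$ decouples, so the characteristic polynomial factors into $(\lambda+\mu_0|\xi|^2)$ times the explicit quadratic $\lambda^2+(\mu_0+\gamma_0)|\xi|^2\lambda+b_1\br|\xi|^2$, whereas in one dimension the paper faced a non-factoring cubic and had to invoke the cubic formula and the Routh--Hurwitz criterion. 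All your steps check out: Vieta gives the sign of the real parts; the rationalized form of $\tilde\delta$ gives uniform boundedness and the limit $b_1\br/(\mu_0+\gamma_0)$; the value $|\xi|^2\bigl(b_1\br-\mu_0\gamma_0|\xi|^2\bigr)$ of the quadratic at $-\mu_0|\xi|^2$ separates $\tilde\lambda$ from the other two roots for large $|\xi|$; the radial chain rule $\Delta_\xi F(|\xi|^2)=4|\xi|^2F''(|\xi|^2)+4F'(|\xi|^2)$ even yields the stronger bound $O(|\xi|^{-4})$; and the quadratic relation closes the eigenvector verification. Two points are worth recording. First, your reading silently corrects two typos in the paper: the $(3,2)$ entry of $\tilde A(\xi)$ must be $-\gamma_0\xi_1\xi_2$ (not $-\gamma_0\xi_1-\xi_2$), i.e.\ the velocity block is the symmetric $-\mu_0|\xi|^2 I_2-\gamma_0\xi\xi^{\top}$ you use, and the square root in the eigenvalue formulas must contain $(\gamma_0+\mu_0)^2|\xi|^2$ in the denominator---with the paper's first power the limit of $-\tilde\delta$ would be $b_1\br$, contradicting the lemma's own third bullet. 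Second, the branch-selection worry you flag at the end is vacuous here: every differentiability claim in the lemma is made only for $|\xi|\geq\xi_0$, where the discriminant is strictly positive and $\tilde\delta$ is a single smooth expression, so no matching across the sign change of the discriminant is ever needed.
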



Now if we choose $$\displaystyle (\sh_T,{\bf \vh}_T)(\xi) = \sh_T(\xi) 
\left(1, \frac {\tilde\delta(\xi) \xi_1}{\br i |\xi|^2}, \frac {\tilde \delta(\xi) \xi_2}{\br i |\xi|^2}\right),$$ the solution
of \eqref{eq:2Dadjoint} in $\mathbb{R}^2 \times (0,T),$ can be written in the following way 
\begin{align} \label{eq:2Drepresentation}
 & \sigma(x,t) = \frac{1}{(2\pi)^2} \int_{\mathbb{R}^2} \sh_T(\xi) e^{i x \xi}  e^{\tilde\delta(\xi)(T-t)} d\xi, \notag \\
 & v_1(x,t) = \frac{1}{(2\pi)^2} \int_{\mathbb{R}^2} \sh_T(\xi) e^{i x \xi} \ \frac {\tilde \delta(\xi) \xi_1}{\br i |\xi|^2} \
 e^{\tilde\delta(\xi)(T-t)} d\xi, \notag \\
 & v_2(x,t) = \frac{1}{(2\pi)^2} \int_{\mathbb{R}^2} \sh_T(\xi) e^{i x \xi} \ \frac {\tilde \delta(\xi) \xi_2}{\br i |\xi|^2} \
 e^{\tilde\delta(\xi)(T-t)} d\xi.
 \end{align}
 
 \begin{thm}
  Let $\bar \xi \in \mathbb{R}^2$ with $|\bar \xi | = 1$ and $x_0 \in \mathbb{R}^2.$ Let $\tilde\psi$ be a smooth function compactly supported
  in the unit ball and of unit $L^2({\mathbb{R}^2})$ norm. For any $\epsilon > 0$, define 
  $$ \sh_T^\epsilon(\xi) = \epsilon^{\frac{1}{4}}\tilde\psi\left(\sqrt{\epsilon}\left(\xi - \frac{\bar \xi}{\epsilon}\right) \right) e ^{-ix_0 \xi}$$
 and $(\sigma^\epsilon, v_1^\epsilon, v_2^\epsilon )$ as in \eqref{eq:2Drepresentation}. Then they satisfy the following estimates
\begin{itemize}
 \item [(i)] $\displaystyle \frac{1}{(2\pi)^2} e^{-2a_2T} \leq \|\sigma^\epsilon(\cdot,0)\|_{L^2(\mathbb{R}^2)} \leq \frac{1}{(2\pi)^2} .$
  \item [(ii)] For any $\eta > 0$, there exists a constant $C$ independent of $\epsilon$ such that 
  \begin{equation}
   \|\sigma^\epsilon\|^2_{L^2(0,T;L^2(|x-x_0|\geq \eta))} \leq C \sqrt{\epsilon},
  \end{equation}
\item[(iii)] There exists a positive constant $C$ independent of $\epsilon$ such that 
\begin{equation}
 \|{\bf v}^\epsilon\|_{L^2(0,T;(L^2(\mathbb{R}))^2)} \leq C \epsilon^2.
\end{equation}
\end{itemize}
\end{thm}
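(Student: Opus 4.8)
The plan is to mirror, in two space dimensions, the proof of Theorem~\ref{thm4}, using the explicit Fourier representation \eqref{eq:2Drepresentation} together with the spectral information on $\tilde\delta(\xi)$ from the preceding lemma (boundedness of the real part, the limit as $|\xi|\to\infty$, and the derivative estimate). The one genuinely new feature compared with the one-dimensional case is that the non-stationary phase argument yielding the spatial localization in (ii) must be iterated: a single integration by parts no longer gives an $x$-integrable bound in $\mathbb{R}^2$, both because the resulting power of $\epsilon$ is negative and because $|x-x_0|^{-1}$ fails to be square-integrable at infinity in two dimensions.

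First I would dispose of (i) and (iii), which are pure Parseval computations. After the substitution $\zeta=\sqrt\epsilon(\xi-\bar\xi/\epsilon)$ one gets $\|\sh_T^\epsilon\|_{L^2(\mathbb{R}^2)}^2=\int_{\mathbb{R}^2}|\tilde\psi(\zeta)|^2\,d\zeta=1$, since $|e^{-ix_0\xi}|=1$. By Parseval in $\mathbb{R}^2$,
\[
\|\sigma^\epsilon(\cdot,0)\|_{L^2(\mathbb{R}^2)}^2=\frac{1}{(2\pi)^2}\int_{\mathbb{R}^2}e^{2(\mathrm{Re}\,\tilde\delta(\xi))T}\,|\sh_T^\epsilon(\xi)|^2\,d\xi,
\]
so (i) follows from $e^{-2a_2T}\le e^{2(\mathrm{Re}\,\tilde\delta(\xi))T}\le 1$, a consequence of $0\le -\mathrm{Re}\,\tilde\delta(\xi)<a_2$. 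For (iii), again by Parseval and $\xi_1^2+\xi_2^2=|\xi|^2$,
\[
\|\mathbf v^\epsilon(\cdot,t)\|_{(L^2(\mathbb{R}^2))^2}^2=\frac{1}{(2\pi)^2}\int_{\mathbb{R}^2}\frac{|\tilde\delta(\xi)|^2}{\br^2|\xi|^2}\,e^{2(\mathrm{Re}\,\tilde\delta(\xi))(T-t)}\,|\sh_T^\epsilon(\xi)|^2\,d\xi.
\]
On the support of $\sh_T^\epsilon$ one has $|\xi|\sim 1/\epsilon$ while $\tilde\delta(\xi)$ stays bounded (it converges to $-b_1\br/(\mu_0+\gamma_0)$), whence $|\tilde\delta(\xi)|^2/|\xi|^2\le C\epsilon^2$ there; integrating in $t$ gives $\|\mathbf v^\epsilon\|^2_{L^2(0,T;(L^2)^2)}\le C\epsilon^2$.

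The heart of the argument is (ii). Writing, after the same change of variable,
\[
\sigma^\epsilon(x,t)=\frac{\epsilon^{-3/4}}{(2\pi)^2}\int_{\mathbb{R}^2}\tilde\psi(\zeta)\,e^{i(x-x_0)\cdot(\zeta/\sqrt\epsilon+\bar\xi/\epsilon)}\,e^{\tilde\delta(\zeta/\sqrt\epsilon+\bar\xi/\epsilon)(T-t)}\,d\zeta,
\]
I would integrate by parts with the first-order operator $L=\frac{-i\sqrt\epsilon}{|x-x_0|^2}(x-x_0)\cdot\nabla_\zeta$, which fixes the oscillatory factor $e^{i(x-x_0)\cdot\zeta/\sqrt\epsilon}$; the compact support of $\tilde\psi$ kills all boundary terms. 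Each application costs one $\zeta$-derivative on the amplitude and produces a factor $\sqrt\epsilon/|x-x_0|$. The crucial point is that $\nabla_\zeta$ hitting the symbol produces $\frac{1}{\sqrt\epsilon}\nabla_\xi\tilde\delta$, and the derivative estimate $|\nabla_\xi\tilde\delta(\xi)|\le C/|\xi|^2\le C\epsilon^2$ on the support keeps the amplitude and its derivatives bounded uniformly in $\epsilon$. Since $|x-x_0|^{-N}$ is square-integrable over $\{|x-x_0|\ge\eta\}$ in $\mathbb{R}^2$ only for $N\ge 2$, I would integrate by parts twice, obtaining
\[
|\sigma^\epsilon(x,t)|\le C\frac{\epsilon^{-3/4}\,\epsilon}{|x-x_0|^2}=C\frac{\epsilon^{1/4}}{|x-x_0|^2},\qquad |x-x_0|\ge\eta.
\]
Squaring and integrating over $\{|x-x_0|\ge\eta\}\times(0,T)$ (the radial integral $\int_\eta^\infty r^{-3}\,dr$ converges) yields $\|\sigma^\epsilon\|^2_{L^2(0,T;L^2(|x-x_0|\ge\eta))}\le C\sqrt\epsilon$.

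I expect the main obstacle to be the bookkeeping in this iterated integration by parts: after two applications of $L$ the amplitude derivatives involve second-order derivatives of $\tilde\delta$ and products of first-order ones, so one must check that the \emph{second} derivatives of the symbol also decay (one expects like $|\xi|^{-3}$), a fact not literally recorded in the lemma but obtained by differentiating the explicit formula for $\tilde\delta(\xi)$ twice. Once these higher-order symbol estimates are in hand, every amplitude derivative is bounded uniformly in $\epsilon$ on the support of $\tilde\psi$, and the argument closes exactly as in the one-dimensional Theorem~\ref{thm4}.
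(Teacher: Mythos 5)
Your proposal is correct and takes essentially the same approach as the paper: Parseval identities for (i) and (iii) (with the norms read as squared, exactly as in the one-dimensional Theorem~\ref{thm4}), and for (ii) a non-stationary-phase integration by parts gaining $|x-x_0|^{-2}$ (needed for square-integrability at infinity in $\mathbb{R}^2$), with the amplitude derivatives kept bounded uniformly in $\epsilon$ by the decay of the derivatives of $\tilde\delta$ on the support of $\sh_T^\epsilon$. The only cosmetic difference is that the paper performs the double integration by parts in one stroke with the operator $\bigtriangleup_\zeta$ (which is why the lemma records a Laplacian estimate for $\tilde\delta$) instead of iterating your first-order operator $L$ twice; your bookkeeping (Jacobian factor $\epsilon^{-3/4}$, net bound $C\epsilon^{1/4}/|x-x_0|^2$) is in fact the correct one, and agrees with the paper's final estimate.
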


\begin{proof}
 We will only give proof of estimate (4.6). Other estimates can be proved in a similar manner as before. From \eqref{eq:2Drepresentation} we have
 \begin{align*}
 \sigma^\epsilon(x,t) &= \frac{1}{(2 \pi)^2}\int_{\mathbb{R}^2} \epsilon^{\frac{1}{4}} 
 \tilde\psi\left(\sqrt{\epsilon}\left(\xi - \frac{\bar \xi}{\epsilon}\right)\right)
 e^{i (x-x_0) \xi} e^{\tilde\delta(\xi)(T-t)} \ d\xi \\ 
 &= \frac{\epsilon^{-1/4}}{(2\pi)^2} \int_{|\zeta|\leq 1} \tilde\psi(\zeta) e^{i(x-x_0)(\frac{\zeta}{\sqrt{\epsilon}}+ 
\frac{\bar \xi}{\epsilon})}
 e^{\tilde\delta(\frac{\zeta}{\sqrt{\epsilon}}+ \frac{\bar \xi}{\epsilon})(T-t)} \ d\zeta.
\end{align*}
Note that 
\[
\displaystyle \bigtriangleup_{\zeta}  e^{i(x-x_0)(\frac{\zeta}{\sqrt{\epsilon}}+ \frac{\bar \xi}{\epsilon})}  = 
- \frac{|x-x_0|^2}{\sqrt{\epsilon}} e^{i(x-x_0)(\frac{\zeta}{\sqrt{\epsilon}}+ \frac{\bar \xi}{\epsilon})}.
\]
Thus for $|x-x_0| \geq \eta > 0,$ we have 
\begin{align}
 \sigma^\epsilon(x,t) &=  - \frac{\epsilon^{1/4}}{4\pi^2  |x-x_0|^2} \int_{|\zeta|\leq 1} 
 \bigtriangleup_{\zeta} \left( e^{i(x-x_0)(\frac{\zeta}{\sqrt{\epsilon}}+ \frac{\bar \xi}{\epsilon})} \right) \tilde\psi(\zeta)
  e^{\tilde\delta(\frac{\zeta}{\sqrt{\epsilon}}+ \frac{\bar \xi}{\epsilon})(T-t)} \ d\zeta \notag \\
  &= -\frac{\epsilon^{1/4}}{4\pi^2  |x-x_0|^2} \int_{|\zeta|\leq 1} 
   e^{i(x-x_0)(\frac{\zeta}{\sqrt{\epsilon}}+ \frac{\bar \xi}{\epsilon})}  
   \bigtriangleup_{\zeta} 
   \left( \tilde\psi(\zeta) e^{\tilde\delta(\frac{\zeta}{\sqrt{\epsilon}}+ \frac{\bar \xi}{\epsilon})(T-t)} \right) \ d\zeta.
\end{align}
 
Thus for $\epsilon$ small, we have 
\begin{align*}
 \left|\bigtriangleup_{\zeta} 
   \left( \tilde\psi(\zeta) e^{\tilde\delta(\frac{\zeta}{\sqrt{\epsilon}}+ \frac{1}{\epsilon})(T-t)} \right) \right| 
    \leq C \sqrt{\epsilon}.
\end{align*}
Notice that the constant $C$ is independent of $\epsilon.$ Therefore for $\epsilon$ small enough and $|x - x_0|\geq \eta >0$ we have 
\begin{align}
 |\sigma^\epsilon(x,t)| \leq C \frac{\epsilon^{1/4}}{4\pi^2|x- x_0|^2} \sqrt{\epsilon}.
\end{align}
Thus there exists a positive constant $C(T,\eta)$, such that 
\begin{align}
 \|\sigma^\epsilon\|^2_{L^2(0,T;L^2(|x-x_0|\geq \eta))} \leq C \sqrt{\epsilon}.
\end{align}
This proves (ii).
\end{proof}

\begin{remark}
 Now we can proceed as before to prove Theorem \ref{thm1.5}.
\end{remark}

\end{document}